\numberwithin{equation}{section}
\theoremstyle{plain}
\newtheorem{thm}{Theorem}[section]
\newtheorem{lem}[thm]{Lemma}
\newtheorem{prop}[thm]{Proposition}
\theoremstyle{definition}
\theoremstyle{remark}
\newtheorem{rem}[thm]{Remark}
\newcommand{\ep}{\epsilon}
\newcommand{\betah}{\hat{\beta}}
\newcommand{\Jcch}{\check{\mathcal{J}}{}}
\newcommand{\chihu}{\underline{\hat{\chi}}{}}
\newcommand{\ghu}{\underline{\gh}{}}
\newcommand{\whu}{\underline{\wh}{}}
\newcommand{\vhu}{\underline{\vh}{}}
\newcommand{\zhu}{\underline{\zh}{}}
\newcommand{\hhu}{\underline{\hh}{}}
\newcommand{\Bhu}{\underline{\Bh}{}}
\newcommand{\Qhu}{\underline{\Qh}{}}
\newcommand{\Gammahu}{\underline{\hat{\Gamma}}{}}
\newcommand{\chih}{\hat{\chi}{}}
\newcommand{\Kttt}{\tilde{\Ktt}{}}
\newcommand{\gttt}{\tilde{\gtt}{}}
\newcommand{\Hct}{\tilde{\mathcal{H}}}
\newcommand{\Aft}{\tilde{\mathfrak{A}}}
\newcommand{\Mft}{\tilde{\mathfrak{M}}}
\newcommand{\Bft}{\tilde{\mathfrak{B}}}
\newcommand{\Cft}{\tilde{\mathfrak{C}}}
\newcommand{\Hft}{\tilde{\mathfrak{H}}}
\newcommand{\Dft}{\tilde{\mathfrak{D}}}
\newcommand{\Ct}{\tilde{C}}
\newcommand{\Bcv}{\boldsymbol{\Bc}}
\newcommand{\alphah}{\hat{\alpha}}
\newcommand{\Hchat}{\hat{\Hc}}
\newcommand{\Sigmah}{\hat{\Sigma}}
\newcommand{\Fvt}{\tilde{\Fv}}
\newcommand{\Icv}{\boldsymbol{\Ic}}
\newcommand{\gttb}{\bar{\gtt}}
\newcommand{\Kttb}{\bar{\Ktt}}
\begin{document}

\title[Localized Big Bang Stability of Spacetime Dimensions $n\geq4$]{Localized Big Bang Stability of Spacetime Dimensions $n\geq4$}

\author[W.~Zheng]{Weihang Zheng}
\address{School of Mathematical Sciences\\
9 Rainforest Walk\\
Monash University, VIC 3800\\ Australia}
\email{weihang.zheng@monash.edu}

\begin{abstract}
    We prove the past nonlinear stability of the sub-critical Kasner-scalar field solutions to the Einstein-scalar field equations on a truncated cone domain in spacetime dimensions $n\geq4$. Our analysis demonstrates that the perturbed solutions are asymptotically pointwise Kasner, geodesically incomplete in the contracting direction and terminate at quiescent and crushing singularities characterized by the blow-up of curvature invariants. This work generalizes the result of Beyer-Oliynyk-Zheng in \cite{BOZ:2025} to all higher dimensional spacetimes.
\end{abstract}

\maketitle

\section{Introduction}
The study of the cosmological singularities, such as the big bang and black holes, remains an essential challenge in general relativity. According to Penrose's Strong Cosmic Censorship (SCC)\footnote{For a rough version, see \cite[Conjecture 17.1, 17.2]{Ringstrom-CauchyGR:2009}.}, generically the maximal globally hyperbolic development of the initial data is terminated because of the formation of some kind of singularities, e.g. the curvature invariants blowup. Furthermore, due to Hawking's singularity theorem in \cite{Hawking:1967,Hawking:1970}, there is a wide range of initial data for the Einstein's field equations such that the corresponding cosmological solutions eventually exhibit a singularity in the sense of the incompleteness of geodesics, and this result applies to any matter fields that satisfy the strong energy condition. Although the presence of the singularity is predicted by both SCC and Hawking's theorem, the nature of these singularities remains poorly understood, as the latter theorem relies on a proof by contradiction and only implies the boundedness of geodesic length.

In \cite{belinskii1970,Belinskii:1982}, Belinskii, Khalatnikov and Lifshitz (BKL) proposed two types of generic behavior of gravitational fields near cosmological singularities. Specifically, the dynamics of the spacetimes collapsing towards the singularity are either chaotic and oscillatory or quiescent, where we sometimes refer the former case to the \textit{mixmaster universe}, a term introduced in \cite{Misner:1969}. The chaotic and oscillatory case is particularly challenging to analyze, and rigorous results in this context remain limited; see, for example, \cite{Beguin2010,Beguin2023,li2024b,li2024a,Liebscher2013,Marshall:2025,Ringstrom2000,Ringstrom2001,Weaver2000}. In contrast, there are literature dating back to the last century suggesting some heuristics about the quiescent big bang formation. In light of the works in \cite{Barrow:1978,belinskiiKhalatnikov:1972,belinskii1970,Demaret:1985,lifshitz1963}, for certain exact solutions to the Einstein's equations coupled with specific types of matter fields, one may expect big bang formation with asymptotic behavior near the singularity and the key point is that the solutions satisfy the sub-critical condition defined in \cite{Fournodavlos_et_al:2023}.

The understanding of quiescent big bang formation saw its first major breakthrough in the work of Rodnianski and Speck in \cite{RodnianskiSpeck:2018b,RodnianskiSpeck:2018c,Speck:2018}, who demonstrated stable formation for initial data near Friedmann-Lema\^{i}tre-Robertson-Walker (FLRW) solutions to the Einstein-scalar field equations. This result was later extended to moderately spatially anisotropic initial data in \cite{RodnianskiSpeck:2022}. A remarkable end for this series of papers is the work \cite{Fournodavlos_et_al:2023} by Fournodavlos–Rodnianski–Speck (FRS), in which they proved the past nonlinear stability of the Kasner-scalar field metrics of entire the sub-critical regime. Another significant contribution is due to Groeniger–Petersen–Ringstr\"{o}m in \cite{Groeniger_et_al:2023}, where the authors established the formation of the quiescent big bang singularity of a broader range of initial data that does not have to be closed to a certain reference metric and this result applies to the Einstein-scalar field equations with a non-vanishing scalar potential. We would also like to mention that in \cite{AnHeShen:2025}, An-He-Shen generalized the result by FRS to solutions to the more complicated Einstein–Maxwell–scalar field–Vlasov system in the strong sub-critical regime.\footnote{The strong sub-critical regime is defined in \cite[Def. 1.1]{AnHeShen:2025}.} A more comprehensive discussion of other research developments connected to this topic is reserved for Section \ref{related-works}.

Among the aforementioned results concerning the stable big bang formation, the constant mean curvature (CMC) gauge is commonly employed to define the time function. An important feature of this choice is that it can synchronize the big bang singularity and enable us to describe the asymptotic behavior as $t\searrow0$. Also note that in \cite{Ringstrom2025}, Ringstr\"{o}m establishes a local existence theory for a class of CMC gauges for the Einstein-non-linear scalar field equations. However, using the CMC foliation results in infinite propagation speed\footnote{A more precise reason is that there is an elliptic equation for the lapse, see, e.g. \cite[Eq. (2.25)]{Fournodavlos_et_al:2023}.}, implying that the solutions depend non-locally on the initial data. Given the hyperbolic nature of Einstein’s field equations, imposing global proximity to spatially homogeneous models (such as FLRW or Kasner-scalar field metrics) is physically unnatural. This motivates the study of a localized stability theorem, formulated on a truncated cone domain.

This question was first partially answered by Beyer and Oliynyk in \cite{BeyerOliynyk:2024b}, where they proved the localized nonlinear stability of the FLRW solution to the Einstein-scalar field equations in spacetime dimensions $n\geq3$. We stress that in their work, a different time gauge was employed. Other than the CMC, they used the scalar field in the conformal picture as the time function, which can also synchronize the big bang singularity while preserves the hyperbolicity of the Einstein's equations. The method of using the scalar field as a time function also led to the localized stability result for FLRW solutions to the Einstein-Euler-scalar field system in \cite{BeyerOliynyk:2024c}. Later, Beyer-Oliynyk-Zheng (BOZ) extended the result in \cite{BeyerOliynyk:2024b}, proving localized past nonlinear stability of the entire sub-critical family of Kasner solutions in four spacetime dimensions.

A series of foundational work in string theory, see e.g. \cite{CANDELAS1985,GREEN1984,WITTEN1995,Yau1978}, indicates that our universe might have higher dimensions. They predict that there are extra dimensions which are curled up into tiny shapes, such as the Calabi-Yau manifolds. This serves as one of the motivation of the present article and the aim is to generalize the result by BOZ to spacetimes with dimensions $n\geq4$.

\subsection{The Einstein-scalar field equations}
In this article, we analyze the Einstein-scalar field equations on a manifold $(M,\gb)$ given by
\begin{align}
    \Rb_{ab} &= 2\nablab_a\varphi \nablab_b\varphi, \label{ESF.1} \\
    \Box_{\gb}\varphi &=0, \label{ESF.2}
\end{align}
where $\Rb_{ab}$ is the Ricci tensor of the metric $\gb_{ab}$, $\nablab$ is the Levi-Civita connection of $\gb_{ab}$ and $\Box_{\gb}=\gb^{ab}\nablab_a\nablab_b$ is the wave operator. Equation \eqref{ESF.1} can be rewritten in the more standard form $\Gb_{ab}=\Tb_{ab}$, where
\begin{align}
    \Gb_{ab} &= \Rb_{ab} - \frac{1}{2}\Rb \gb_{ab}, \label{Gbab} \\
    \Tb_{ab} &= 2\nablab_a\varphi \nablab_b\varphi - |\nablab\varphi|_{\gb}^2 \gb_{ab}, \label{Tbab}
\end{align}
are the Einstein tensor and the stress-energy tensor, respectively, and $\Rb$ denotes the scalar curvature of $\gb_{ab}$. Substituting \eqref{Gbab} and \eqref{Tbab} into $\Gb_{ab}=\Tb_{ab}$ yields\footnote{This form is useful for deriving the frame formalism for a general stress–energy tensor; see \eqref{cEEa}.}
\begin{equation} \label{ESF.3}
    \Rb_{ab} = \Tb_{ab} - \frac{1}{n-2}\gb^{cd}\Tb_{cd}\gb_{ab} = 2\nablab_a\varphi \nablab_b\varphi.
\end{equation}

\subsection{Kasner-scalar field spacetimes}
In accordance with the discussion in \cite[\S 1.1-1.2]{BeyerOliynyk:2024b}, the
\textit{Kasner-scalar field spacetimes} are determined by the following metric and scalar field
\begin{equation} \label{Kasner-metric}
    \gb_{ab} = t^{\frac{2}{n-2}}\gt_{ab} \quad \text{and} \quad \varphi = \sqrt{\frac{n-1}{2(n-2)}}\ln(\tau),
\end{equation}
respectively, on the spacetime manifold $M=\Rbb_{>0}\times\Tbb^{n-1}$ (see Section \ref{coor-frame} for notations conventions). Here,
\begin{equation} \label{conf-Kasner}
    \gt = -t^{r_0}dt\otimes dt + \sum_{\Lambda=1}^{n-1}t^{r_\Lambda}dx^{\Lambda}\otimes dx^{\Lambda} \quad \text{and} \quad \tau = t,
\end{equation}
where $\gt$ and $\tau$ are the conformal\footnote{The term 'conformal' comes from the fact that in \cite{BeyerOliynyk:2024b}, \{$\gt$,$\tau$\} are actually the solutions to the conformal Einstein-scalar field equations \cite[Eq.(1.16)-(1.17)]{BeyerOliynyk:2024b}.} Kasner metric and the conformal scalar field, that together determine a spatially homogeneous and anisotropic spacetime, cf. \cite[Eq.~(1.18),(1.19)]{BeyerOliynyk:2024b}. In these expressions, the constants $r_0$ and $r_\Lambda$ are called the \textit{conformal Kasner exponents} and are defined by
\begin{equation} \label{Kasner-exps-A}
    r_0 = \frac{1}{P}\sqrt{\frac{2(n-1)}{n-2}} - \frac{2(n-1)}{n-2} \AND r_\Lambda = \frac{1}{P}\sqrt{\frac{2(n-1)}{n-2}}q_\Lambda - \frac{2}{n-2},
\end{equation}
where  $0<P\leq\sqrt{(n-2)/(2(n-1))}$. The parameters $q_\Lambda$, known as the \textit{Kasner exponents}, satisfy the \textit{Kasner relations} 
\begin{equation} \label{Kasner-rels-A}
    \sum_{\Lambda=1}^{n-1}q_\Lambda = 1 \AND \sum_{\Lambda=1}^{n-1}q_\Lambda^2 = 1 - 2P^2. 
\end{equation}
A straightforward calculation using \eqref{Kasner-exps-A} shows that the \textit{Kasner relations} for $q_\Lambda$ are equivalent to the following conditions on $r_0$ and $r_\Lambda$:
\begin{equation} \label{Kasner-rels-B}
    \sum_{\Lambda=1}^{n-1} r_\Lambda = r_0 \AND \sum_{\Lambda=1}^{n-1} r_\Lambda^2 = (r_0+2)^2 - 4.
\end{equation}
From \eqref{Kasner-exps-A} and the bound $0 < P \leq \sqrt{(n-2)/(2(n-1))}$, we observe that 
\begin{equation} \label{r0-bound}
    r_0 \geq 0.
\end{equation}
Furthermore, the second condition in \eqref{Kasner-rels-B} implies
\begin{equation} \label{rA-bound}
    r_\Lambda < r_0 + 2.
\end{equation}

The Kasner spacetimes for which all exponents $q_\Lambda$ are equal are called the FLRW spacetimes. In this case, \eqref{Kasner-rels-A} and \eqref{Kasner-rels-B} imply that $r_0=r_\Lambda=0$, and that the conformal Kasner solution in \eqref{conf-Kasner} reduces to
\begin{equation} \label{FLRW-def}
    \gt = -dt\otimes dt + \sum_{\Lambda=1}^{n-1}dx^{\Lambda}\otimes dx^{\Lambda} \quad \text{and} \quad \tau = t,
\end{equation}
which determines a conformal, spatially homogeneous and isotropic spacetime.

In \cite{Fournodavlos_et_al:2023}, the authors define the sub-critical condition (which is also referred to as the stability condition) to be
\begin{equation*} \label{sub-cond-1}
    \max\limits_{\Omega < \Lambda}\{q_\Omega+q_\Lambda-q_\Gamma\} < 1,
\end{equation*}
for $\Omega,\Lambda,\Gamma=1,2,...,n-1$, cf. \cite[Eq.~(1.8)]{Fournodavlos_et_al:2023}. Using the relations in \eqref{Kasner-exps-A}, thid sub-critical condition can be equivalently expressed in terms of the \textit{conformal Kasner exponents} as
\begin{equation} \label{sub-cond-2}
    \max\limits_{\Omega < \Lambda}\{r_\Omega+r_\Lambda-r_\Gamma\} < r_0 + 2.
\end{equation}
This condition is crucial to our proof using the Fuchsian method in Section~\ref{Fuch-form}, particularly for \eqref{Bc-def}.

\begin{rem}
    It is noted both in \cite{Demaret:1985} and \cite[\S 1.5.1]{Fournodavlos_et_al:2023} that in the vacuum case, i.e. in the extreme case that $P=\sqrt{(n-2)/(2(n-1))}$ in \eqref{Kasner-exps-A}, the set of conformal Kasner exponents satisfying the condition \eqref{sub-cond-2} is non-empty only when the spatial dimension $(n-1)\geq10$, due to the constraints in \eqref{Kasner-rels-A}, while in the case where we have a non-vanishing scalar field, the sub-critical condition can be satisfied when $(n-1)\geq3$.
\end{rem}

\subsection{Related works} \label{related-works}
Beyond the works previously discussed, a substantial body of mathematical results addresses the big bang singularity. We begin by reviewing contributions that rely on symmetry assumptions. In \cite{chruscielStrongCosmicCensorship1990}, the authors proved the Strong Cosmic Censorship conjecture for the Einstein vacuum equations with polarized Gowdy symmetry. Subsequently, \cite{IsenbergMoncreif:1990} demonstrated that solutions under the same symmetry assumptions exhibit AVTD\footnote{See \cite[\S 1.3]{BeyerOliynyk:2024b} for the definition.} behavior. Later, Ringstr\"{o}m extended these results in \cite{ringstrom2008,ringstrom2009a}, proving SCC for Gowdy solutions with spatial topology $\Tbb^3$ without the polarization assumption. Related investigations are documented in \cite{Chrusciel:2003jj,ringstrom2005,Ringstrom2006}.

Other results concern the construction of singular solutions of Einstein’s equations with Gowdy symmetry that exhibit Kasner-like asymptotics near the singularity. The first result for analytic solutions was obtained by Kichenassamy and Rendall in \cite{kichenassamy1998}. This was followed by \cite{rendall2000}, which relaxed the analyticity assumption. Further singular solutions were constructed for spatial topologies $S^2\times S^1$ and $S^3$ in \cite{stahl2002}, under generalized wave gauges in \cite{ames2017}, and for self-gravitating fluid flows in \cite{beyer2017}. The Fuchsian method is central to most of these constructions.

The stability and AVTD behavior of Einstein vacuum solutions under polarized $\Tbb^2$-symmetry assumptions were established in \cite{ABIO:2022_Royal_Soc,ABIO:2022}. A recent contribution in \cite{Dong2026} proves the stability of Kasner singularities for the Einstein vacuum spacetime under polarized $U(1)$-symmetry, again utilizing the Fuchsian method. Parallel to stability analysis, there are also results concerning the construction of singular solutions. Analytic solutions for Einstein vacuum spacetimes under polarized $\Tbb^2$-symmetry were given in \cite{IsenbergKichenassamy:1999}, while the analyticity assumption was dropped for the half-polarized setting in \cite{ames2013a}. The construction of polarized and half-polarized $U(1)$-symmetric vacuum solutions was undertaken in \cite{choquet-bruhat2006,Isenberg_2002}, and topologically general $U(1)$-symmetric solutions were treated in \cite{choquet-bruhat2004}.

For results independent of symmetry assumptions, the earliest work was presented by Andersson and Rendall in \cite{andersson2001}, who specified the asymptotics near the singularity for the Einstein-scalar field and Einstein-stiff fluid systems under a real analyticity assumption. This result was extended to higher dimensions and other matter fields in \cite{damour2002}. The vacuum case was later examined by Klinger in \cite{klinger2015}. A related advance was made in \cite{FournodavlosLuk:2023}, where Fournodavlos and Luk dispensed with the real analyticity requirement; ; a localized version was subsequently established by Athanasiou and Fournodavlos in \cite{AthanasiouFournodavlos:2024}.

We also note the work of Fajman and Urban in \cite{FajmanUrban:2025}, where the nonlinear stability of the FLRW solution for the Einstein-scalar field system with homogeneous scalar field matter and a negative Einstein metric was established both to the future and the past. Stable big bang formation of the Einstein scalar-field Vlasov system under nonlinear perturbations was later proved in \cite{FajmanUrban:2024}. A similar result in $2+1$ dimensions, with the initial hypersurface diffeomorphic to a closed orientable surface of arbitrary genus, was subsequently provided by Urban in \cite{Urban:2024}.

Finally, Ringstr\"{o}m introduced a geometric notion of initial data on the singularity for the Einstein-scalar field equations in \cite{ringstrom2022a,ringstrom2022}, showing that previous notions constituted special cases. Moreover, it is established in \cite{Franco-Grisales2025} that such initial data for the Einstein–nonlinear scalar field equations in four spacetime dimensions admit a unique development, without assuming any symmetry or analyticity and allowing for arbitrary closed spatial topology. A comprehensive framework for the analysis of the Einstein–scalar field equations was also developed by Ringstr\"{o}m in \cite{ringstrom2017,ringstrom2021a,ringstrom2021}.

\subsection{A rough version of the main theorem}
The equivalence between the conformal Einstein-scalar field equations and the physical Einstein-scalar field equations, established in \cite[\S 1.1]{BeyerOliynyk:2024b}, justifies our formulation of the main result within the conformal framework. We consider a reference conformal Kasner-scalar field solution and study the evolution of sufficiently small perturbations of its initial data on a constant time hypersurface. The resulting solution to the Einstein-scalar field equations is then analyzed, with particular emphasis on its behavior as $t\searrow0$.

In contrast to the constant mean curvature (CMC) gauge, we employ the conformal scalar field $\tau$ as the time function, which allows us to establish stability without requiring global smallness conditions on the initial data. This localization is physically natural for Einstein's equations and represents an advancement beyond previous global-in-space stability results. The detailed statement appears in Theorem~\ref{main-thm}.

\begin{rem}
    As established in \cite[Prop. 5.8]{BeyerOliynyk:2024b}, for initial data sufficiently close to the background Kasner metric, the Einstein-scalar field equations admit a solution under a brief evolution in time. One can then identify a level surface on which the scalar field $\tau$ is constant. This procedure demonstrates the synchronizability of this particular choice of time function.
\end{rem}

\begin{thm}[Localized past stability of the sub-critical Kasner metrics in dimensions $n\geq4$]
    The conformal Kasner-scalar field solutions in the sub-critical regime in sapcetime dimensions $n\geq4$ are dynamically stable under small perturbations when restricted to a truncated cone domain.
    
    More precisely, given $\rho_0\in(0,L)$, $\ep_2\in(0,1)$ and $\rho_1>0$ satisfying condition \eqref{rho1-range}, sufficiently small perturbations of the Kasner initial data generates a solution to the Einstein-scalar field equations on the manifold $\Omega_{\Icv}$\footnote{See Section~\ref{Domains} for definitions of domains in $\Rbb^{n-1}$.} with $\Icv=(t_0,0,\rho_0,\rho_1,\ep_2)$. Moreover, the perturbed solutions exhibit the following characteristics: they are are asymptotically pointwise Kasner, display specific asymptotic behaviors, are $C^2$-inextendible and past timelike geodesically incomplete, terminate at a crushing singularity along the spacelike Cauchy hypersurfaces, satisfy the AVTD property and the curvature invariants blow up\footnote{For definitions of the terminologies, see Theorem~\ref{main-thm} for references.}.
\end{thm}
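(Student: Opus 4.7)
The overarching plan is to lift the four-dimensional argument of Beyer-Oliynyk-Zheng in \cite{BOZ:2025} to all spacetime dimensions $n\geq 4$, recasting the Einstein-scalar field equations \eqref{ESF.1}-\eqref{ESF.2} in a conformal orthonormal frame and using the scalar field $\tau$ in \eqref{Kasner-metric} as a time function so that hyperbolicity, and with it finite speed of propagation, is preserved. Working in the conformal picture \eqref{Kasner-metric}-\eqref{conf-Kasner}, I would introduce perturbation variables for the frame components of the metric, the second fundamental form, the lapse, the shift and the scalar field gradient, each rescaled by suitable powers of $t$ dictated by the conformal Kasner exponents \eqref{Kasner-exps-A} so that the background solution corresponds to the vanishing of the perturbation. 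The relations \eqref{Kasner-rels-B} together with the bounds \eqref{r0-bound}-\eqref{rA-bound} ensure that these rescalings are internally consistent and that the background frame coefficients and source terms in \eqref{ESF.3} behave in a controlled Fuchsian manner as $t\searrow 0$.

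The next step is to rewrite the evolution system as a first-order symmetric hyperbolic Fuchsian equation of the schematic form
\begin{equation*}
B^0(t,x,U)\partial_t U + B^i(t,x,U)\partial_i U = \frac{1}{t}\mathcal{B}(t,x,U)\,U + F(t,x,U),
\end{equation*}
where $U$ collects all perturbation variables, $B^0$ is positive definite, and the singular matrix $\mathcal{B}$ is the object referenced by \eqref{Bc-def} in Section~\ref{Fuch-form}. The key algebraic point is that the eigenvalues of $\mathcal{B}$ are built out of differences of the form $r_{\Omega}+r_{\Lambda}-r_{\Gamma}-r_0$, so that the sub-critical condition \eqref{sub-cond-2} is precisely what produces the sign required to run a coercive Fuchsian energy estimate. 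This is also the reason for the dimension restriction $n\geq 4$: the remark following \eqref{sub-cond-2} shows that the set of conformal Kasner exponents satisfying \eqref{sub-cond-2} is non-empty and open, in the presence of a non-vanishing scalar field, exactly when $n\geq 4$.

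The energy estimates would then be carried out on the truncated cone $\Omega_{\Icv}$ introduced in Section~\ref{Domains}. Because the gauge is genuinely hyperbolic in $\tau$, I would choose the spatial truncation parameter $\rho_1$ in the admissible range \eqref{rho1-range} so that it strictly dominates every characteristic speed of the nonlinearised system; the standard domain-of-dependence argument then eliminates boundary contributions. The estimates are closed in a $t$-weighted Sobolev space of sufficiently high order; the delicate point is commuting spatial derivatives through $\frac{1}{t}\mathcal{B}$ and showing that the resulting commutators are lower-order perturbations, which is controlled by the strict inequality in \eqref{sub-cond-2}. A continuation and bootstrap argument then gives existence of the perturbed solution all the way down to $t=0$, together with quantitative Fuchsian decay rates for $U$ and its spatial derivatives.

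Finally, the geometric conclusions listed in Theorem~\ref{main-thm} would be extracted from these Fuchsian bounds. Asymptotic pointwise Kasner behavior follows from the existence of a limit as $t\searrow 0$ of the rescaled perturbation variables, which yields spatially dependent exponents satisfying analogues of \eqref{Kasner-rels-A}-\eqref{Kasner-rels-B} at each $x$. From this, together with \eqref{ESF.3}, the AVTD property, $C^2$-inextendibility, past timelike geodesic incompleteness, the crushing singularity on $\{t=\mathrm{const}\}$, and the blow-up of curvature invariants follow as in \cite{BOZ:2025}, with the only $n$-dependent modifications being the precise scaling exponents. The hard part, in my view, is the construction in the second paragraph: in higher dimensions $\mathcal{B}$ has many more off-diagonal couplings among the $r_{\Lambda}$ than in four dimensions, and verifying that \eqref{sub-cond-2} alone tames every such coupling while still permitting a coercive weighted energy identity is the main technical obstacle of the paper.
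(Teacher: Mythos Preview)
Your outline is broadly aligned with the paper's strategy, but it glosses over two structural mechanisms that are essential in dimensions $n\geq 4$ and that are not simply a matter of ``more off-diagonal couplings.''

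First, the Fuchsian system is not a single symmetric hyperbolic equation for $U$ as you sketch. The paper splits the unknowns into a hierarchy: the lower-order spatial derivatives $W_\bc$ with $|\bc|<k$ are governed by genuine ODEs (no spatial derivative principal part), while only the top-order block $|\bc|=k$ satisfies a symmetric hyperbolic PDE. The point is that the raw evolution system \eqref{EEc.1}--\eqref{EEc.6} is \emph{not} symmetrizable while keeping $\mathcal{B}$ positive definite; one must first add multiples of the momentum constraint $\Mf_A$ to the $C_{ABC}$ and $U_A$ equations, then apply a carefully tuned change of variables $V$ and symmetrizer $\Sc$ with block structure mixing $C_{ABC}$ and $U_A$ (Section~\ref{Symmetrization}). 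The resulting $\Bsc$ is only positive definite after adding $k\nu B^0$ for $k$ sufficiently large, which is what forces the hierarchy. In four dimensions \cite{BOZ:2025} could exploit the decomposition $C_{ABC}=2A_{[A}\delta_{B]C}+\epsilon_{ABD}N^{DC}$; here you must work with $C_{ABC}$ directly, and the symmetrization involves solving an overdetermined algebraic system for twelve parameters. Your identification of the ``hard part'' is correct, but the resolution is this layered Fuchsian structure together with the explicit symmetrizer, not a direct coercivity check on $\mathcal{B}$.

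Second, you do not address constraint propagation, and your proposal to treat the shift as a perturbation variable is off: the paper works in zero-shift gauge throughout, so there is no shift. More importantly, the Fuchsian global existence theorem does not by itself guarantee that the constraints \eqref{A-cnstr-3}--\eqref{H-cnstr-3} remain zero, because constraint multiples were added to obtain the top-order equations. The paper handles this differently from \cite{BOZ:2025} (which derived a strongly hyperbolic constraint propagation system): here one instead proves a separate local-in-time existence result in Lagrangian coordinates (Section~\ref{Lag-IVP}), verifies that the Lagrangian gauge coincides with the frame gauge, and uses uniqueness to conclude that the Fuchsian solution actually solves the constrained Einstein-scalar field system. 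Without this step your solution on $\Omega_{\Icv}$ need not be a solution of \eqref{ESF.1}--\eqref{ESF.2}.
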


A global-in-space version of this stability result on the manifold $M_{0,t_0}$ can be formulated directly; see Remark~\ref{main-thm-v2} for details.

\begin{rem}
    It is worthwhile to mention a very recent work \cite{ringstrom2026} by Franco-Grisales and Ringstr\"{o}m, where they prove that the solutions constructed in \cite{Groeniger_et_al:2023} induce data on the singularity in the sense of \cite{ringstrom2022a}. In \cite[Thm. 22]{ringstrom2026}, their assumptions are relatively weak regarding gauge choices (for instance, the foliation is not required to be constant mean curvature). However, their result cannot be applied directly to the present work. Specifically, in Theorem~\ref{main-thm}, we establish that the rescaled conformal lapse $\alpha$ decays as (it is straightforward to transfer it back to the physical picture):
    \begin{equation*}
        \alpha = t^{\ep_1+\frac{r_0}{2}+(n-1)\Hchat}\alphah \bigl(1+\Ord_{H^{k-1}(\mathbb{B}_{\rhot_0})}(t^{\zeta})\bigr),
    \end{equation*}
    where $\alphah$ is positive and bounded and $\Hchat$ depends on spatial coordinates. In contrast, \cite[Eq. (36d)]{ringstrom2026} requires the physical lapse to remain near to $1$ for all time. Adapting our framework to fit the hypotheses of \cite[Thm. 22]{ringstrom2026} would therefore require a careful choice of a different gauge. We plan to address this question in future work.
\end{rem}

\subsection{Overview of the proof}
In this article, we present a frame formalism of the Einstein-scalar field equations on spacetimes with dimensions $n\geq4$. This formulation adopts the same gauge choices as those in the \textit{Lagrangian coordinates} formulation in \cite{BeyerOliynyk:2024b}, which allows us to derive a local-in-time existence result that ensures the propagation of the constraints. This frame formalism can be further transformed into the \textit{Fuchsian system}, yielding global-in-time existence, along with energy and decay estimates. These results for the \textit{Fuchsian system} on $\Omega_{\Icv}$ are then applied to establish the localized stable formation of a big bang singularity.

\bigskip

\noindent \underline{\textit{The frame formalism in spacetime dimensions $n\geq4$.}}
We begin by introducing a conformal transformation defined by
\begin{equation*}
    \gb_{\mu\nu} = e^{2\Phi}\gt_{\mu\nu}.
\end{equation*}
Within this conformal framework, we employ a zero-shift gauge, so the conformal metric takes the form
\begin{equation*}
    \gt= - \alphat^2 dt\otimes dt + \gt_{\Sigma \Omega}dx^\Sigma \otimes dx^\Omega,
\end{equation*}
where $\alphat$ denotes the lapse. The orthonormal frame $\{\et_a\}$ is constructed by first setting $\et_0=\frac{1}{\alphat}\del{t}$ and then evolving the remaining spatial frames $\{\et_A\}$ via Fermi-Walker transport:
\begin{equation*}
    \nablat_{\et_0}\et_A = -\frac{\gt(\nablat_{\et_0}\et_0, \et_A)}{\gt(\et_0, \et_0)}\et_0.
\end{equation*}
This choice preserves the orthonormality of the frame. We stress that from now on, all tensor fields are expressed relative to the conformal orthonormal frame $\{\et_a\}$. To formulate the Einstein's equations, we define the following variables originating from the commutators of the frame elements
\begin{align*}
    [\et_0,\et_A] &= \Ut_A \et_0 - (\Hct\delta_{AB} + \Sigmat_A{}^B)\et_B, \\
    [\et_A,\et_B] &= \Ct_A{}^C{}_B \et_C.
\end{align*}

Now, align with the work in \cite[\S 1.1]{BeyerOliynyk:2024b}, the conformal factor is chosen as
\begin{equation*}
    \Phi = \sqrt{\frac{2}{(n-2)(n-1)}}\,\varphi.
\end{equation*}
This leads to the conformal Einstein-scalar field equations:
\begin{align*}
    \Gt_{ab} &= \frac{1}{\tau}\nablat_a\nablat_b\tau, \\
    \Box_{\gt}\tau &= 0,
\end{align*}
where $\Gt_{ab}$ is the Einstein tensor of $\gt$ and $\tau$ is the conformal scalar field. Instead of using the constant mean curvature, we use $\tau$ to define the time function, that is
\begin{equation*}
    \tau = t.
\end{equation*}

In Section~\ref{conf-ESF} and ~\ref{time-gauge}, we rewrite the Einstein-scalar field equations into the following first-order system for the variables $\Wt=(\et_A^\Omega,\alphat,\Ct_{ABC},\Ut_A,\Hct,\Sigmat_{AB})^{\tr}$:
\begin{align}
    \del{t}\et_{A}^{\Omega} &= -\alphat(\Hct\delta_{A}^{B} + \Sigmat_{A}{}^{B})\et_{B}^{\Omega}, \label{EEa.1} \\
    \del{t}\Ct_A{}^C{}_B &= -2\alphat\et_{[A}(\Hct)\delta_{B]}^C - 2\alphat\et_{[A}(\Sigmat_{B]}{}^C) - 2\alphat\Hct \Ut_{[A}\delta_{B]}^C - 2\alphat\Ut_{[A}\Sigmat_{B]}{}^C \notag \\
        &\quad -\alphat\Hct\Ct_A{}^C{}_B + 2\alphat\Sigmat_{[A}{}^D \Ct_{B]}{}^C{}_D + \alphat\Ct_A{}^D{}_B\Sigmat_D{}^C, \label{EEa.2} \\
    \del{t}\Hct &= -\alphat\Hct^{2} + \frac{1}{n-1}\alphat\et_{A}(\Ut^{A}) + \frac{1}{n-1}\alphat\Ut_{A}(\Ut^{A}-\Ct^A{}_B{}^B) - \frac{1}{n-1}\alphat\Sigmat_{AB}\Sigmat^{AB} + \frac{1}{t}\Hct, \label{EEa.3} \\
    \del{t}\Sigmat_{AB} &= -(n-1)\alphat\Hct\Sigmat_{AB} + \alphat\et_{\langle A}(\Ut_{B\rangle}) + \alphat\Ut_{\langle A}\Ut_{B\rangle} - \alphat\et^C(\Ct_{C\expval{AB}}) - \alphat\et_{\langle A}(\Ct_{B\rangle C}{}^C) \notag \\
        &\quad  - \frac{1}{4}\big( 2\alphat\Ct_{CD\langle A}\Ct_{B\rangle}{}^{CD} + 2\alphat\Ct_{CD\langle A}\Ct_{B\rangle}{}^{DC} - \alphat\Ct_{C\langle A}{}^D\Ct_{B\rangle}{}^C{}_D + \alphat\Ct_C{}^D{}_{\langle A}\Ct^C{}_{B\rangle D} \notag \\
        &\quad - \alphat\delta_{DE}\Ct_{C\langle A}{}^D\Ct^E{}_{B\rangle}{}^C \big) - \alphat\Ut^C\Ct_{C\expval{AB}} + \alphat\Ct_{CD}{}^D\Ct^C{}_{\expval{AB}} - \frac{1}{t}\Sigmat_{AB}, \label{EEa.4} \\
    \del{t}\alphat &= (n-1)\Hct \alphat^2, \label{EEa.5} \\
    \del{t}\Ut_A &= (n-1)\alphat\et_A(\Hct) + (n-2)\alphat\Hct\Ut_A - \alphat\Sigmat_A{}^B\Ut_B. \label{EEa.6}
\end{align}
This is accompanied by the following constraint system:
\begin{align}
    \Aft_{AB}^\Omega &:= 2\et_{[A}(\et_{B]}^{\Omega}) - \Ct_A{}^C{}_B \et_C^\Omega = 0, \label{A-cnstr-1} \\
    \Bft_{AB} &:= 2\et_{[A}(\Ut_{B]}) - \Ct_A{}^C{}_B \Ut_C = 0, \label{B-cnstr-1} \\
    \Cft_{ABC}^D &:= \et_C(\Ct_A{}^D{}_B) + \et_A(\Ct_B{}^D{}_C) + \et_B(\Ct_C{}^D{}_A) \notag \\
        &\quad \, + \Ct_A{}^E{}_B\Ct_C{}^D{}_E + \Ct_B{}^E{}_C\Ct_A{}^D{}_E + \Ct_C{}^E{}_A\Ct_B{}^D{}_E = 0, \label{C-cnstr-1} \\
    \Dft_A &:= \et_{A}(\alphat) - \alphat\Ut_{A} = 0, \label{D-cnstr-1} \\
    \Mft_A &:= \et_{B}(\Sigmat_A{}^B) - (n-2)\et_{A}(\Hct) + \Ct_{ABC}\Sigmat^{BC} - \Ct_{BC}{}^C\Sigmat_A{}^B + \frac{1}{\alphat t}\Ut_A = 0, \label{M-cnstr-1}\\
    \Hft &:= 2\et_A(\Ct^A{}_B{}^B) + (n-1)(n-2)\Hct^{2} - \Sigmat_{AB}\Sigmat^{AB} - \Ct_{AB}{}^B\Ct^A{}_C{}^C \nonumber \\
        &\quad - \frac{1}{4}\Ct_{ABC}(\Ct^{ABC}+\Ct^{BAC}+\Ct^{ACB}) + \frac{2(n-1)}{\alphat t}\Hct = 0. \label{H-cnstr-1}
\end{align}
A key feature of this frame formalism is that the elliptic equation for the lapse vanishes, which is essential for achieving finite propagation speed.

\bigskip

\noindent \underline{\textit{The Fuchsian formulation.}}
The frame formulation of the Einstein-scalar field equations is not well-suited for analyzing its behavior near the singularity at $t=0$. To address this, we transform the system \eqref{EEa.1} to \eqref{EEa.6} to the Fuchsian form defined in \cite[\S 3]{BOOS:2021}. This is achieved by introducing the rescaled variables $W = (e_P^\Sigma, \alpha, C_{PQR}, U_P, \Hc, \Sigma_{PQ})^{\tr}$, defined as
\begin{align*}
    \Hc &= t \alphat\Hct - \frac{r_0}{2(n-1)}, \\
    \Sigma_{AB} &= t \alphat\Sigmat_{AB} - \biggl(\frac{1}{2}r_{AB} - \frac{r_0}{2(n-1)}\delta_{AB}\biggr), \\
    \alpha &= t^{\ep_1}\alphat, \\
    e_A^\Omega &= t^{\ep_2}\alphat \et_A^\Omega, \\
    U_A &= t \alphat\Ut_A, \\
    C_{ABC} &= t \alphat\Ct_{ABC},
\end{align*}
where $\ep_1, \ep_2 \in\Rbb$ are constants that satisfy the following inequalities
\begin{equation*}
    \ep_1+\frac{r_0}{2}>0, \quad 0<\ep_2<1 \quad \text{and} \quad \ep_2+\frac{r_0}{2}-\frac{r_A}{2}>0.
\end{equation*}
An important property of this way of defining $W$ is that $\lim_{t\searrow0}W|_{Kasner}=0$, implying that perturbations of $W$ around zero correspond to perturbations of a background Kasner solution; see Remark~\ref{W-Kasner-limit} for details.

The Fuchsian formulation requires two distinct systems. First, we express \eqref{EEa.1} to \eqref{EEa.6} in terms of the rescaled variables $W$ and put them in the matrix form:
\begin{equation} \label{intro-1}
    \del{t}W = \frac{1}{t^{\ep_2}}e_D^\Lambda E^D \del{\Lambda}W + \frac{1}{t}\Bc\Pbb W + \frac{1}{t}F_l.
\end{equation}
Since $\ep_2<1$, we choose an arbitrary positive number $\nu$ such that
\begin{equation*}
    \ep_2 + \nu < 1.
\end{equation*}
The lower order variables $W_\bc$ are defined by
\begin{equation*}
    W_\bc := t^{|\bc|\nu}\partial^{\bc}W = t^{|\bc|\nu}(\partial^{\bc}e_P^\Sigma, \partial^{\bc}\alpha, \partial^{\bc}C_{PQR}, \partial^{\bc}U_P, \partial^{\bc}\Hc, \partial^{\bc}\Sigma_{PQ})^{\tr}, \quad |\bc|<k,
\end{equation*}
where $\bc=(\bc_1,...,\bc_{n-1})$ is a multiindex\footnote{See Section~\ref{Index} for the multiindex definition.} and $k\in\Nbb$ is a positive integer to be determined. Spatially differentiating \eqref{intro-1} yields the evolution equation for the lower order terms:
\begin{equation} \label{intro-2}
    \del{t}W_\bc = \frac{1}{t}(|\bc|\nu\id+\Bc\Pbb)W_\bc + \frac{1}{t^{\ep_2+\nu}}\sum_{\substack{\bc'\leq\bc\\|\ac|=1}}\Pbb W_{\bc-\bc'}*W_{\bc'+\ac} + \frac{1}{t}\sum_{\bc'\leq\bc}\Pbb W_{\bc-\bc'}*(\Pbb W_{\bc'}+\Pbb^{\perp}W_{\bc'}),  \quad |\bc|<k.
\end{equation}
To close the system, we also need an equation for the highest order term, $W_{\bc}$ with $|\bc|=k$. Since adding multiples of constraints to the evolution system preserves physical equivalence, we use this freedom to modify \eqref{intro-1} into what follows
\begin{equation} \label{intro-3}
    \del{t}W + \frac{1}{t^{\ep_2}}e_D^\Lambda A^D \del{\Lambda}W = \frac{1}{t}\Ac W + \frac{1}{t}F_h.
\end{equation}
Introducing a change of variables and an appropriate symmetrizer, equation \eqref{intro-3} can be cast into symmetric hyperbolic form:
\begin{equation} \label{intro-4}
    B^0\del{t}\Wt_\bc + \frac{1}{t^{\ep_2}}e_D^\Lambda B^D \del{\Lambda}\Wt_\bc = \frac{1}{t}\Bsc\Wt_\bc + \frac{1}{t^{\ep_2+\nu}}\sum_{\substack{\bc'\leq\bc\\|\ac|=1}}\Pbb W_{\bc-\bc'}*W_{\bc'+\ac} + \frac{1}{t}\sum_{\bc'\leq\bc}\Pbb W_{\bc-\bc'}*(\Pbb W_{\bc'}+\Pbb^{\perp}W_{\bc'}),
\end{equation}
for $|\bc|=k$; see Section~\ref{low-order} and ~\ref{high-order} for details. Note that the lower order terms satisfy ODEs, while the highest order terms satisfy PDEs. Combining \eqref{intro-2} and \eqref{intro-4} yields the Fuchsian system:
\begin{equation} \label{intro-5}
    \Bv^0\del{t}\Wv + \frac{1}{t^{\ep_2}}\Bv^\Lambda(\Wv) \del{\Lambda}\Wv = \frac{1}{t}\tilde{\Bcv}\Pv\Wv + \frac{1}{t^{\ep_2+\nu}}\Hv(\Wv) + \frac{1}{t}\Fv(\Wv).
\end{equation}

The introduction of additional spatial derivative terms in the symmetrization process ensures a symmetric hyperbolic structure while maintaining the positive definiteness of $\tilde{\Bcv}$. Notice that in \cite{BeyerOliynyk:2024b}, where the authors established the stability around the FLRW solution (which is homogeneous and isotropic), controlling higher order terms is unnecessary, unlike in \cite{BOZ:2025} and the present work. This can be intuitively explained by the fact that greater anisotropy in the spacetime requires controlling higher-order derivatives. This intuition finds further support in \cite{Li:2024}, where the author studied the linearized Einstein-scalar field equations. The optimal scattering results obtained there indicate that it is unavoidable to control higher order spatial derivatives when approaching the boundary of the sub-critical regime.

\bigskip

\noindent \underline{\textit{Global-in-time existence of Fuchsian system.}}
We establish twodistinct stability results: a global-in-space version on $M_{0,t_0}$ and a local-in-space version on $\Omega_{\Icv}$. The global result follows directly from the work in \cite{BOOS:2021}, where the authors derived the global existence theory for Fuchsian systems along with corresponding energy and decay estimates. Specifically, we verify that the coefficients in \eqref{intro-5} satisfy all the assumptions detailed in \cite[\S 3.4]{BOOS:2021}; the complete argument is presented in the proof of Proposition~\ref{Fuch-global}.

Considering the localized Fuchsian stability on $\Omega_{\Icv}$, we proceed by first using the extension operator from Section~\ref{extension} to extend the initial data to the full hypersurface $\{t_0\}\times\Tbb^{n-1}$. An application of Proposition~\ref{Fuch-global} then yields a solution on the global domain $M_{0,t_0}$. The uniqueness of the solution within the truncated cone domain $\Omega_{\Icv}$ is subsequently obtained by demonstrating that its lateral boundary $\Gamma_{\Icv}$ is weakly spacelike with respect to the Fuchsian system \eqref{intro-5}. The detailed justification for this step is provided in the proof of Proposition~\ref{Fuch-local}.

\bigskip

\noindent \underline{\textit{Local-in-time existence via Lagrangian coordinates.}}
In the frame formalism of the Einstein-scalar field equations, we employ the zero-shift gauge and the set $\tau=t$. These gauge choices are identical to those used in the formulation relative to Lagrangian coordinates in \cite[\S 5.3, \S 5.4]{BeyerOliynyk:2024b}; further details are provided in Section~\ref{gauges-veri}. Moreover, the initial data of the Lagrangian system can be used to construct the frame variables $\Wt$ by following the same procedures outlined in Section~\ref{Tetrad-formalism}. Consequently, the local-in-time existence result for the Einstein-scalar field equations in Lagrangian coordinates guarantees the existence of a solution to the system \eqref{EEa.1} to \eqref{EEa.6} and ensures the propagation of the constraints \eqref{A-cnstr-1} to \eqref{H-cnstr-1}.

A key feature of the Lagrangian system is that both the evolution system\footnote{See Section~\ref{Lag-IVP} for definitions of coefficients.}
\begin{equation} \label{intro-6}
    P^0 \del{t}Z + P^\Gamma \del{\Gamma}Z = Y
\end{equation}
and the wave gauge propagation equation
\begin{align}
    \nabla_{\mu}\nabla^{\mu}X^{\nu} + 2\nabla_\mu \ln(\tau)\nabla^{[\mu}X^{\nu]} + \nabla_\mu \ln(\tau)\nabla^{\nu}X^\mu + \bigl(R^{\nu}{}_{\mu} - \nabla^{\nu}\nabla_\mu \ln(\tau) - 2\nabla^\nu\ln(\tau)\nabla_\mu\ln(\tau)\bigr) X^\mu = 0 \label{intro-7}
\end{align}
are hyperbolic and share the same principal parts. Thus, it is not difficult to derive the local-in-time existence, uniqueness and continuation principle for both \eqref{intro-6} and \eqref{intro-7} on the domains $M_{t_1,t_0}$ and $\Omega_{\Icv}$ with $\Icv=(t_0,t_1,\rho_0,\rho_1,\ep_2)$. We also verify that the lateral boundary of $\Omega_{\Icv}$ is weakly spacelike with respect to the Lagrangian system; see the argument preceding Proposition~\ref{local-local}.

Establishing this local-in-time existence is crucial for ensuring that the global-in-time solution obtained from the Fuchsian system indeed corresponds to a solution of the Einstein-scalar field equations. This is necessary because Propositions~\ref{Fuch-global} and~\ref{Fuch-local} do not require the constraints to vanish as the system evolves. In \cite{BOZ:2025}, this issue is addressed by proving that the constraint propagation system is strongly hyperbolic, ensuring that constraints vanishing initially persist. While in the present article, we instead establish this result by exploiting the physical equivalence between the frame system and the Lagrangian system.

\bigskip

\noindent \underline{\textit{Localized big bang stability.}}
Our analysis aims to characterize the physical metric. In Section~\ref{physical-quant}, we derive expressions for key physical quantities, e.g. scalar curvature $\Rb$, Ricci scalar invariant $\Rb_{ab}\Rb^{ab}$, the second fundamental form $\Kttb_{AB}$ and the invariant $\Cb_{A0B0}\Cb^{A0B0}$, originating from the Weyl tensor, relative to the conformal orthonormal frame $\{\et_a\}$.

The localized past nonlinear stability of the entire family of sub-critical Kasner solutions in spacetime dimensions $n\geq4$ is established in Theorem~\ref{main-thm} by synthesizing the results of Proposition~\ref{Fuch-local} and ~\ref{local-local}. The proof proceeds as follows: Proposition~\ref{Fuch-local} guarantees the existence of a unique solution to the Fuchsian system \eqref{intro-5} for initial data that are sufficiently close to the background Kasner metric. This same initial data also generate a local-in-time solution to the frame system \eqref{EEa.1} to \eqref{EEa.6}, which, upon spatial differentiation, yields a solution to the Fuchsian system. A uniqueness argument then shows these two solutions are identical. Furthermore, we establish that the perturbed solutions exhibit the following characteristics near the singularity at $t=0$: they have specific asymptotic behavior, past timelike geodesic incompleteness, terminate at a crushing singularity, possess AVTD property and are asymptotically pointwise Kasner.

\subsection{Acknowledgments}
The author would like to thank Florian Beyer for helpful discussions regarding the selection of appropriate curvature invariants originating from the Weyl tensor to characterize the singularity formation. The author is also grateful to Todd Oliynyk for his valuable comments and suggestions during the revision process, which greatly improved the clarity and rigor of this work.

\section{Preliminaries}

\subsection{Data availability statement}
This article has no associated data.

\subsection{Coordinates and frames conventions} \label{coor-frame}
In this article, we will consider $n$-dimensional spacetime manifolds of the form
\begin{equation*} \label{manifold-def}
    M_{t_1, t_0} = (t_1, t_0] \times \Tbb^{n-1},
\end{equation*}
where $0\leq t_1<t_0$ and $\Tbb^{n-1}$ is $(n-1)$-torus defined as
\begin{equation*} \label{Tbb-def}
    \Tbb^{n-1} = [-L, L]^{n-1}/\sim,
\end{equation*}
where $\sim$ is the equivalence relation obtained from identifying the sides of the box $[-L, L]^{n-1}\in\Rbb^{n-1}$.

On $M_{t_1, t_0}$, we employ the coordinates $(x^\mu)=(x^0, x^\Omega)$ where $(x^\Omega)$ are the periodic spatial coordinates on $\Tbb^{n-1}$ and $(x^0)$ is the time coordinate on the interval $(t_1, t_0]$. Lower case Greek letters, e.g. $\mu,\nu$ will run from $0$ to $n-1$ labelling the spacetime coordinate indices, while upper case Greek letter, e.g. $\Sigma,\Omega$ will run from $1$ to $n-1$ labelling the spatial coordinate indices. Partial derivatives with respect to the coordinates $(x^\mu)$ are denoted by $\del{\mu}=\frac{\partial}{\partial x^\mu}$. We will usually use $t$ to denote the time coordinate $x^0$, i.e. $t=x^0$, and use the $\del{t}$ to denote $\del{0}$, which is the partial derivative with respect to $x^0$.

We will use frames $e_a=e_a^\mu \del{\mu}$ constructed from the coordinate frames in this article. Lower case Lattin letters, e.g. $a,b,c$ will run from $0$ to $n-1$ labelling the spacetime frame indices, while upper case Lattin letters, e.g. $A,B,C$ will run from $1$ to $n-1$ labelling the spatial frame indices.

\subsection{Index operators and multi-indices} \label{Index}
For a spatial tensor on the manifold $M_{t_1, t_0}$, the symmetrization, anti-symmetrization and symmetric trace free operators on any two of its spatial indices are defined respectively by
\begin{equation} \label{index-op}
    T_{(AB)} = \frac{1}{2}(T_{AB}+T_{BA}), \quad T_{[AB]} = \frac{1}{2}(T_{AB}-T_{BA}) \quad \text{and} \quad T_{\langle AB\rangle} = T_{(AB)} - \frac{1}{n-1}\delta_{AB}\delta^{CD}T_{CD}.
\end{equation}
In this article, we will use the $*$-notation to denote the multilinear maps of spatial tensor fields for which it is not necessary to know that exact coefficients. To be more precise, the term $[C*U]$ will stand for the tensor fields of the form
\begin{equation*}
    \delta^{DF}C_{ABC}U_D \quad \text{or} \quad \ell_{AB}^{EFG} C^B{}_{C\langle D}U_{F\rangle},
\end{equation*}
where $\ell_{AB}^{EFG}$ are some constants. The $*$-notation will also obey the distribution law, i.e. we have
\begin{equation*}
    [(C+U)*(\Hc+\Sigma)] = C*\Hc + C*\Sigma + U*\Hc + U*\Sigma.
\end{equation*}

We use the standard multi-index notation. If $\bc=(\bc_1, \bc_2,..., \bc_{n-1})$ is an $(n-1)$-tuple of nonnegative integers $\bc_i$, we call $\bc$ a multi-index. Its order is defined as $|\bc|=\bc_1+\bc_2+\cdot\cdot\cdot+\bc_{n-1}$. We also denote $\bc!=\bc_1!\bc_2!\cdot\cdot\cdot\bc_{n-1}!$. We use $\partial^\bc$ to denote the spatial partial derivative of order $|\bc|$, which is given by
\begin{equation*}
    \partial^\bc = \del{1}^{\bc_1} \del{2}^{\bc_2}\cdot\cdot\cdot \del{n-1}^{\bc_{n-1}}.
\end{equation*}
If $\ac$ and $\bc$ are two multi-indices, we say that $\ac\leq\bc$ provided that $\ac_i\leq\bc_i$ for $1\leq i\leq n-1$. In this case $\bc-\ac$ is also a multi-index and we have that
\begin{equation*}
    \binom{\bc}{\ac} = \frac{\bc!}{\ac!(\bc-\ac)!}.
\end{equation*}
Also, for any $k\in\Nbb$ and a function $W$,
\begin{equation*}
    \partial^k W = \bigl\{\partial^\bc W \, \big| \, |\bc|=k\bigr\}
\end{equation*}
denotes the collection of all the spatial derivatives of $W$ of order $k$.

\subsection{Vectors and matrices}
In this article, the Euclidean norm of a vector $v=(v_1,v_2,...,v_N)^{\tr}$ in $\Rbb^N$ is defined by $|v|=(v^{\tr}v)^{\frac{1}{2}}$. For example, using the Einstein summation convention, we have $|\Sigma|=(\Sigma_{AB}\Sigma^{AB})^{\frac{1}{2}}$.

Let $\mathbb{M}_N$ denote the set of all real $N\times N$ matrices. For $A,B\in\mathbb{M}_N$, we say that $A\leq B$ if
\begin{equation*}
    v^{\tr}Av \leq v^{\tr}Bv,
\end{equation*}
for all $v\in\Rbb^N$.

\subsection{Domains in $\Tbb^{n-1}$} \label{Domains}
A centered ball of radius $\rho$ in $\Tbb^{n-1}$, with $0<\rho<L$, is defined as
\begin{equation*}
    \mathbb{B}_\rho := \bigl\{ x\in[-L, L]^{n-1} \big| \, |x|<\rho \bigr\} \, (\subset\Tbb^{n-1}),
\end{equation*}
where, as the above choice of notation, $|x|=\sqrt{\delta_{\Lambda\Omega}x^\Lambda x^\Omega}$ is the Euclidean norm of $x$ on $\Rbb^{n-1}$. Given constants $t_0>0$, $0\leq t_1<t_0$, $0<\rho_0<L$, $\rho_1>0$ and $0\leq\ep<1$ that satisfy
\begin{equation} \label{rho1-range}
    \rho_0 - \frac{\rho_1 t_0^{1-\ep}}{1-\ep} > 0,
\end{equation}
we label them as a single index $\Icv=(t_0,t_1,\rho_0,\rho_1,\ep)$ and define the truncated cone domain $\Omega_{\Icv}$ as a subset of $M_{0,t_0}$ to be
\begin{equation} \label{OmegaIcv}
    \Omega_{\Icv} = \biggl\{ (t,x)\in(t_1,t_0)\times(-L,L)^{n-1} \, \bigg| \, |x|<\frac{\rho_1(t^{1-\ep}-t_0^{1-\ep})}{1-\ep}+\rho_0 \biggr\}.
\end{equation}
We use $\Gamma_{\Icv}$ to denote the "side" piece of the boundary of $\Omega_{\Icv}$ and we see from \eqref{OmegaIcv} that it is determined by the vanishing of the function
\begin{equation*}
    \omega = |x| - \frac{\rho_1(t^{1-\ep}-t_0^{1-\ep})}{1-\ep} - \rho_0.
\end{equation*}
Differentiating $\omega$ gives us a differential 1-form
\begin{equation} \label{normal}
    n_\mu = -\frac{\rho_1}{t^\ep}\delta_\mu^0 + \frac{1}{|x|}x^\Lambda \delta_{\mu\Lambda},
\end{equation}
which is the outward-pointing normal to $\Gamma_{\Icv}$. The boundary of $\Omega_{\Icv}$ can be decomposed as the following disjoint union
\begin{equation*}
    \partial\Omega_{\Icv} = \bigl(\{t_0\}\times\mathbb{B}_{\rho_0}\bigr) \cup \Gamma_{\Icv} \cup \bigl(\{t_1\}\times\mathbb{B}_{\tilde{\rho}_0}\bigr),
\end{equation*}
where $\tilde{\rho}_0=\frac{\rho_1(t_1^{1-\ep}-t_0^{1-\ep})}{1-\ep}+\rho_0$ and the balls $\{t_0\}\times\mathbb{B}_{\rho_0}$ and $\{t_1\}\times\mathbb{B}_{\tilde{\rho}_0}$ bound the truncated cone domain $\Omega_{\Icv}$ from the top and the bottom respectively.

\subsection{Sobolev spaces and extension operators} \label{extension}
For $k\in\Zbb_{\geq0}$ and $p\geq1$, the $W^{k,p}$ norm of a map $u\in C^{\infty}(U,\Rbb^N)$ with $U\subset\Tbb^{n-1}$ open is defined by
\begin{equation*}
    \norm{u}_{W^{k,p}(U)} = \begin{cases}
        \begin{displaystyle}
            \biggl( \sum_{0\leq|\bc|\leq k}\int_{U}|\partial^{\bc}u|^p \, d^{n-1}x \biggr)^{\frac{1}{p}}
        \end{displaystyle}
        & \text{if $1\leq p<\infty$} \\
        \begin{displaystyle}
            \max_{0\leq|\bc|\leq k} \, \sup_{x\in U} |\partial^{\bc}u(x)|
        \end{displaystyle} 
        & \text{if $p=\infty$}
    \end{cases}
\end{equation*}
whenever the right hand side makes sense and $\bc$ is a multi-index. The Sobolev space $W^{k,p}(U,\Rbb^N)$ is then defined to be the completion of $C^\infty(U,\Rbb^N)$ with respect to the norm $\norm{\cdot}_{W^{k,p}(U)}$. We will also write $W^{k,p}(U)$ instead of $W^{k,p}(U,\Rbb^N)$ if the dimension $N$ is clear from the context. We will employ the standard notation $H^k(U,\Rbb^N)=W^{k,2}(U,\Rbb^N)$ in this article.

For each centered ball $\mathbb{B}_\rho\subset\Tbb^{n-1}$ such that $\rho<L$, there exists a total extension operator defined as
\begin{equation*}
    E_\rho : H^k(\mathbb{B}_\rho, \Rbb^N) \longrightarrow H^k(\Tbb^{n-1}, \Rbb^N), \quad k\in\Zbb_{\geq0}
\end{equation*}
and satisfying
\begin{equation*}
    E_\rho(u) = u \quad \text{a.e. in $\mathbb{B}_\rho$} \quad \text{and} \quad \norm{E_\rho(u)}_{H^k(\Tbb^{n-1})} \leq C\norm{u}_{H^k(\mathbb{B}_\rho)}
\end{equation*}
for some constant $C=C(k,n,\rho)>0$ independent of $u\in H^k(\mathbb{B}_\rho)$. For the existence of such an extension operator, we refer to \cite{AdamsFournier:2003}. Details can be found in Theorem 5.22 and Remark 5.23.

\subsection{Constants and inequalities}
In this article, we will use the notation $f\lesssim g$ to represent the inequalities $f\leq Cg$ when we do not need to know the exact value and the dependence on other quantities of the constant $C$. On the other hand, when the dependence on other quantities is of importance, for example if the constant $C$ depends on the value of $\norm{u}_{H^k(U)}$, we will write $f\leq C(\norm{u}_{H^k(U)})g$. Constants of this type will always be nonnegative, non-decreasing and continuous functions of their dependence.

\subsection{Order notation} \label{order-nota}
For $t_0>0$, $U$ an open set in  $\Tbb^{n-1}$, and $g\in C^0\bigl((0,t_0],\Rbb_{>0}\bigr)$, we say that a time dependent function $f\in C^0\bigl((0,t_0],H^k(U)\bigr)$ is \textit{order $g(t)$} and write $f=\Ord_{H^k(U)}(g(t))$ if
\begin{equation*}
    \norm{f(t)}_{H^k(U)} \lesssim g(t), \quad \forall \, t\in (0,t_0].
\end{equation*}

\begin{lem} \label{lem:asymptotic}
Suppose $t_0>0$, $U$ is an open set in $\Tbb^{n-1}$, $g\in C^0\bigl((0,t_0],\Rbb_{>0}\bigr)$, and
$f\in C^1\bigl((0,t_0],H^k(U)\bigr)$ for $k\in\Zbb_{>\frac{n-1}{2}+1}$, then if $\del{t}f=\Ord_{H^k(U)}(g(t))$ and $\int_0^{t_0} g(s)\,ds<\infty$, there exists a unique $F\in C^0\bigl([0,t_0],H^k(U)\bigr)$ such that $F(t)=f(t)$ for all $t\in (0,t_0]$, and 
$F(t)=F(0)+\Ord_{H^k(U)}(h(t))$ where $h(t)=\int_0^t g(s)\,ds$.
\end{lem}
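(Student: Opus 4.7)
The plan is to use the $L^1$ bound on $\partial_t f$ supplied by the hypothesis $\int_0^{t_0} g(s)\,ds<\infty$ to show that $f(t)$ is Cauchy in $H^k(U)$ as $t\searrow 0$, and then obtain $F(0)$ as the limit by completeness of the Sobolev space. The asymptotic bound is then read off by letting one endpoint of the integral tend to $0$ in the fundamental theorem of calculus.

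More concretely, I would first note that since $f\in C^1\bigl((0,t_0],H^k(U)\bigr)$, the Bochner-valued fundamental theorem of calculus gives, for any $0<t_2\leq t_1\leq t_0$,
\begin{equation*}
    f(t_1)-f(t_2) \;=\; \int_{t_2}^{t_1}\partial_t f(s)\,ds,
\end{equation*}
where the integral is taken in $H^k(U)$. The order notation from Section~\ref{order-nota} combined with $\partial_t f=\Ord_{H^k(U)}(g(t))$ then yields a constant $C>0$ such that
\begin{equation*}
    \bigl\|f(t_1)-f(t_2)\bigr\|_{H^k(U)} \;\leq\; \int_{t_2}^{t_1}\bigl\|\partial_t f(s)\bigr\|_{H^k(U)}\,ds \;\leq\; C\int_{t_2}^{t_1} g(s)\,ds.
\end{equation*}
The assumption $\int_0^{t_0}g(s)\,ds<\infty$ forces the right-hand side to vanish as $t_1,t_2\searrow 0$, so $\{f(t)\}_{t>0}$ is Cauchy in the Banach space $H^k(U)$.

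Let $F(0):=\lim_{t\searrow 0}f(t)\in H^k(U)$, which exists by completeness, and define $F(t):=f(t)$ for $t\in(0,t_0]$. Continuity of $F$ at interior points follows from $f\in C^1\subset C^0$, while continuity at $0$ is the content of the Cauchy estimate above, so $F\in C^0\bigl([0,t_0],H^k(U)\bigr)$. Uniqueness is immediate: any other continuous extension $\tilde F$ must satisfy $\tilde F(0)=\lim_{t\searrow 0}\tilde F(t)=\lim_{t\searrow 0}f(t)=F(0)$. For the asymptotic bound, I would fix $t\in(0,t_0]$ and pass to the limit $t_2\searrow 0$ in the above inequality with $t_1=t$; combining with the continuity of the norm on $H^k(U)$ gives
\begin{equation*}
    \bigl\|F(t)-F(0)\bigr\|_{H^k(U)} \;\leq\; C\int_0^{t}g(s)\,ds \;=\; C\,h(t),
\end{equation*}
which is exactly the claim $F(t)=F(0)+\Ord_{H^k(U)}(h(t))$.

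There is no serious obstacle in this proof; the argument is the standard ``Cauchy criterion for improper integrals of Banach-valued functions''. The only point where one must be mildly careful is ensuring that the fundamental theorem of calculus and the norm-integral inequality are valid in the Bochner sense on intervals that exclude $0$, which is guaranteed by the hypothesis $f\in C^1\bigl((0,t_0],H^k(U)\bigr)$. Note that the Sobolev threshold $k>\frac{n-1}{2}+1$ plays no role in the argument itself; it is a hypothesis inherited from the surrounding Fuchsian framework so that $F(0)$ and $F(t)-F(0)$ are continuous (in fact $C^1$) functions on $U$ via Sobolev embedding whenever this lemma is applied elsewhere in the paper.
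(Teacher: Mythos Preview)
Your proof is correct and is precisely the standard argument one would expect; the paper itself does not spell out a proof but simply states that it is identical to that of Lemma~2.1 in \cite{BOZ:2025}, which is this same Cauchy-criterion/completeness argument via the Bochner fundamental theorem of calculus. Your remark that the Sobolev threshold $k>\frac{n-1}{2}+1$ is not used in the proof itself is also accurate.
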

The proof is identical to that of Lemma 2.1 in \cite{BOZ:2025}.

\section{Frame formalism of the conformal Einstein-scalar field equations} \label{Tetrad-formalism}

The tetrad formulation of Einstein's field equations on a 4-dimensional spacetime was established in \cite{GarfinkleGundlach:2005,Uggla_et_al:2003,vanElstUggla:1997}. In this section, we generalize this formalism to the Einstein-scalar field equations on an $n$-dimensional spacetime for $n\geq4$. Our notation largely follows that of \cite{BOZ:2025}.

\subsection{An orthonormal frame}
We begin by replacing the physical spacetime metric $\gb_{\mu\nu}$ with a conformal metric $\gt_{\mu\nu}$ defined by
\begin{equation} \label{conf-metricA}
    \gb_{\mu\nu} = e^{2\Phi}\gt_{\mu\nu},
\end{equation}
where the conformal scalar $\Phi$ is to be determined later. Then we introduce an orthonormal frame $\et_a = \et_a^\mu \del{\mu}$ for the conformal metric so that relative to the frame, $\gt_{ab}$ is given by
\begin{equation*} 
    \eta_{ab} = \gt(\et_a,\et_b) = \gt_{\mu\nu}\et_a^\mu \et_j^\nu
\end{equation*}
where
\begin{equation} \label{f-metric}
    \eta_{ab} = -\delta_a^0\delta_b^0 + \delta_{AB} \delta_a^A \delta_b^B.
\end{equation}

We employ a zero-shift gauge in the following formulation; that is, relative to the coordinate frame $\del{\mu}$, the conformal metric is given by
\begin{equation*}
    \gt= - \alphat^2 dt\otimes dt + \gt_{\Sigma \Omega}dx^\Sigma \otimes dx^\Omega,
\end{equation*}
where $\alphat$ denotes the lapse function. To fix the frame, we first set
\begin{equation} \label{et0-def}
    \et_0 = \frac{1}{\alphat}\del{t},
\end{equation}
so that clearly
\begin{equation*}
    \gt_{00} = \gt(\et_0, \et_0) = -1.
\end{equation*}
The remaining frame components $\et_A$ are then determined by propagation via Fermi–Walker transport, defined by
\begin{equation} \label{Fermi-tran}
    \nablat_{\et_0}\et_A = -\frac{\gt(\nablat_{\et_0}\et_0, \et_A)}{\gt(\et_0, \et_0)}\et_0,
\end{equation}
where $\nablat$ is the Levi-Civita connection of $\gt$. A brief calculation shows that if initially
\begin{equation*}
    \gt_{0A}=\gt(\et_0, \et_A)=0 \AND \gt_{AB}=\gt(\et_A, \et_B)=\delta_{AB}
\end{equation*}
then, by virtue of \eqref{Fermi-tran},
\begin{equation*}
    \nablat_{\et_0}\gt(\et_0,\et_A)=\nablat_{\et_0}\gt(\et_A,\et_B)=0.
\end{equation*}
Hence the orthonormality of the frame $\et_i$ is preserved. Consequently, the spatial frame can be expressed as
\begin{equation} \label{etA-def}
    \et_A = \et^\Omega_A \del{\Omega}.
\end{equation}

\subsection{Connection and commutator coefficients} \label{conn-comm-coef}
From the above discussion, $\nablat$ denotes the Levi-Civita connection associated with the metric $\gt$. The connection coefficients $\omega_a{}^c{}_b$ are defined by the relation
\begin{equation*}
    \nablat_{\et_a}\et_b = \omega_a{}^c{}_b \et_c.
\end{equation*}
Similarly, the commutator coefficients $c_i{}^k{}_j$ of the orthonormal frame $\et_a$ are introduced via
\begin{equation*}
    [\et_a, \et_b] = c_a{}^c{}_b \et_c.
\end{equation*}
Since $\nablat$ is torsion-free and $\et_i$ is an orthonormal frame, the connection and commutator coefficients have the following relationship
\begin{equation} \label{coeff-relations}
    c_i{}^k{}_j = \omega_i{}^k{}_j - \omega_j{}^k{}_i \AND \omega_{ijk} = \frac{1}{2}(c_{jik}-c_{kji}-c_{ikj}),
\end{equation}
where we use the metric $\eta_{ab}$ to raise and lower indices. Now we express $[\et_0, \et_A]$ and $[\et_A, \et_B]$ as
\begin{align}
    [\et_0,\et_A] &= \Ut_A \et_0 + \St_A{}^B\et_B, \label{comm-decomp-0} \\
    [\et_A,\et_B] &= \Ct_A{}^C{}_B \et_C. \label{comm-decomp-A}
\end{align}
Then decompose $\St_A{}^B$ into its trace and trace-free parts
\begin{equation} \label{St-decomp}
    \St_{AB} = -\Hct\delta_{AB} - \Sigmat_{AB},
\end{equation}
where
\begin{equation*} \label{Hct-Sigmat-def}
    \Hct = -\frac{1}{n-1}\St_A{}^A \AND \Sigmat_{AB} = -\St_{AB} - \Hct\delta_{AB}.
\end{equation*}
Using the index conventions in \eqref{index-op}, we see that
\begin{equation*} \label{index-sym}
    \Ct_{[A}{}^C{}_{B]} = 0 \AND \Sigmat_{\expval{AB}} = \Sigmat_{AB}.
\end{equation*}

\begin{rem}
    If we are on a 4-dimensional spacetime, then according to \cite[Eq.~(7.2.32)]{Wald:1994}, we can further decompose $\Ct_A{}^C{}_B$ into
    \begin{equation*}
        \Ct_A{}^C{}_B = 2\At_{[A}\delta_{B]}^C + \ep_{ABD}\Nt^{DC},
    \end{equation*}
    where
    \begin{equation*}
        \At_A = -\frac{1}{2}\Ct^B{}_{BA}  \AND  \Nt^{AB} = \frac{1}{2}\ep^{ACD}(\Ct^B{}_{CD}-\delta_C^B\Ct^E{}_{ED}),
    \end{equation*}
    which is the same as that in \cite[Eq.~(3.9)]{BOZ:2025}. Since this decomposition can only work on tensors on 3-dimensional vector spaces, we need to use $\Ct_A{}^C{}_B$ to be our variable in the present article. In the following analysis, we will see that using $\Ct_A{}^C{}_B$ generates equations with different structures that require a more complicated treatment.
\end{rem}

From \eqref{coeff-relations} to \eqref{comm-decomp-A}, a direct computation yields the following expressions for $\omega_a{}^c{}_b$ in terms of $\Ut_A$, $\Ct_{ABC}$, $\Hct$ and $\Sigmat_{AB}$
\begin{gather} \label{connect-form}
    \omega_{0A0} = -\omega_{00A} = \Ut_A, \qquad \omega_{A0B} = -\omega_{AB0} = -(\Hct\delta_{AB}+\Sigmat_{AB}), \notag \\
    \omega_{ABC} = \frac{1}{2}(\Ct_{BAC}-\Ct_{CBA}-\Ct_{ACB}), \qquad \omega_{000} = \omega_{A00} = \omega_{0BC}=0. \label{connect-form}
\end{gather}

\subsection{Conformal Einstein-scalar field equations and constraints} \label{conf-ESF}
In this section, we derive the equations of the frame formalism. Some of them come from the geometric setup and the others come from the Einstein equations.

We begin by applying both sides of \eqref{comm-decomp-0} to the coordinate $t$. Using definitions \eqref{et0-def} and \eqref{etA-def}, we obtain
\begin{align*}
    [\et_0, \et_A]t = \et_0\bigl(\et_A(t)\bigr) - \et_A\bigl(\et_0(t)\bigr) &= \frac{1}{\alphat^2}\et_A(\alphat), \\
    (\Ut_A \et_0 + \St_A{}^B\et_B)t &= \frac{1}{\alphat}\Ut_A.
\end{align*}
This leads to the constraint equation
\begin{equation} \label{Dft-cnstr}
    \et_A(\alphat) - \alphat\Ut_A = 0.
\end{equation}
Applying both sides of \eqref{comm-decomp-0} to the coordinate $x^\Omega$ and using \eqref{et0-def}, \eqref{etA-def} and \eqref{St-decomp}, we obtain the evolution equation for $\et_A^\Omega$:
\begin{equation*}
    \del{t}\et_{A}^{\Omega} = -\alphat(\Hct\delta_{A}^{B} + \Sigmat_{A}{}^{B})\et_{B}^{\Omega}.
\end{equation*}
Next, an application of \eqref{comm-decomp-A} to $x^\Omega$ yields an additional constraint:
\begin{equation*}
    2\et_{[A}(\et_{B]}^{\Omega}) - \Ct_A{}^C{}_B \et_C^\Omega = 0.
\end{equation*}
To derive an evolution equation for $\Ct_A{}^C{}_B$, we utilize the Jacobi identity. Consider
\begin{equation} \label{Jacobi-id1}
    \bigl[\et_0, [\et_A,\et_B]\bigr] + \bigl[\et_A, [\et_B,\et_0]\bigr] + \bigl[\et_B, [\et_0,\et_A]\bigr] = 0,
\end{equation}
and apply \eqref{comm-decomp-0} and \eqref{comm-decomp-A}, together with the identity for the Lie brackets,
\begin{equation} \label{Lie-brack}
    [fX, gY] = fg[X, Y] + fX(g)Y - gY(f)X,
\end{equation}
to rewrite \eqref{Jacobi-id1} as
\begin{align*}
    \et_0(\Ct_A{}^C{}_B)\et_C &= - \Ct_A{}^C{}_B(\Ut_C\et_0 + \St_C{}^D\et_D) + 2\Ut_{[A}\St_{B]}{}^C\et_C - 2\St_{[A}{}^C\Ct_{B]}{}^D{}_C\et_D \notag \\
        &\quad + 2\et_{[A}(\Ut_{B]})\et_0 + 2\et_{[A}(\St_{B]}{}^C)\et_C.
\end{align*}
Applying both sides of this identity to the coordinate $x^\Omega$ and using the decomposition for $\St_A{}^B$ given in \eqref{St-decomp}, we obtain the evolution equation for $\Ct_A{}^C{}_B$:
\begin{align*}
    \del{t}\Ct_A{}^C{}_B &= -2\alphat\et_{[A}(\Hct)\delta_{B]}^C - 2\alphat\et_{[A}(\Sigmat_{B]}{}^C) - 2\alphat\Hct \Ut_{[A}\delta_{B]}^C - 2\alphat\Ut_{[A}\Sigmat_{B]}{}^C \notag \\
        &\quad -\alphat\Hct\Ct_A{}^C{}_B + 2\alphat\Sigmat_{[A}{}^D \Ct_{B]}{}^C{}_D + \alphat\Ct_A{}^D{}_B\Sigmat_D{}^C.
\end{align*}
Applying the same procedure to the coordinate $t$ yields an additional constraint:
\begin{equation*}
    2\et_{[A}(\Ut_{B]}) - \Ct_A{}^C{}_B \Ut_C = 0.
\end{equation*}
We now apply the Jacobi identity to three distinct spatial frames, namely
\begin{equation*}
    \bigl[\et_C, [\et_A,\et_B]\bigr] + \bigl[\et_A, [\et_B,\et_C]\bigr] + \bigl[\et_B, [\et_C,\et_A]\bigr] = 0.
\end{equation*}
Using \eqref{comm-decomp-A} and \eqref{Lie-brack} again, we derive the constraint:
\begin{align*}
    \et_C(\Ct_A{}^D{}_B) + \et_A(\Ct_B{}^D{}_C) + \et_B(\Ct_C{}^D{}_A) + \Ct_A{}^E{}_B\Ct_C{}^D{}_E + \Ct_B{}^E{}_C\Ct_A{}^D{}_E + \Ct_C{}^E{}_A\Ct_B{}^D{}_E = 0.
\end{align*}

So far we have not used the Einstein-scalar field equations yet. It is well known that under the conformal transformation $\gb_{ab}=e^{2\Phi}\gt_{ab}$, the Ricci tensor transforms via
\begin{equation*}
    \Rb_{ab} = \Rt_{ab} - \Upsilon_{ab},
\end{equation*}
where $\Rt_{ab}$ is the Ricci curvature tensor of the conformal metric $\gt$ and
\begin{equation} \label{Upsilon-def}
    \Upsilon_{ab} := (n-2)\nablat_a\psi_b - (n-2)\psi_a\psi_b + \eta_{ab}(\nablat_c \psi^c + (n-2)\psi_c \psi^c)
\end{equation}
with
\begin{equation*} \label{psia-def}
    \psi_a = \nablat_a\Phi = \et_a(\Phi).
\end{equation*}
Also the Einstein-scalar field equations \eqref{ESF.3} becomes
\begin{equation} \label{cEEa}
    \Rt_{ab} - \Upsilon_{ab} = \Tb_{ab} - \frac{1}{n-2}\Tb_{c}{}^{c}\eta_{ab}
\end{equation}
where $\Tb_c{}^c=\eta^{bc}\Tb_{bc}$ and
\begin{align}
    \Tb_{ab} &= 2\et_a(\varphi)\et_b(\varphi) - \eta^{cd}\et_c(\varphi)\et_d(\varphi)\eta_{ab}. \label{Tbab-def}
\end{align}
For use below, we note that using \eqref{connect-form}, the components of $\Upsilon_{ab}$ can be expressed as:
\begin{align*}
    \Upsilon_{00} &= (n-1)\et_0(\psi_0) + (n-1)\Hct\psi_0 - \bigl(\et_A(\psi^A) + (n-2)\psi_A\psi^A + (n-1)\Ut^A \psi_A + \Ct^A{}_{AB}\psi^B\bigr), \\ 
    \Upsilon_{0A} &= (n-2)\et_0(\psi_A) - (n-2)\Ut_A\psi_0 - (n-2)\psi_0\psi_A, \\ 
    \Upsilon_{\expval{AB}} &= -(n-2)\Sigmat_{AB}\psi_0 + (n-2)\et_{\langle A}\psi_{B\rangle} - (n-2)\psi_{\langle A}\psi_{B\rangle} + \frac{1}{2}(\Ct_{BAC}-\Ct_{CBA}-\Ct_{ACB})\psi^C \notag \\
        &\quad - \delta_{AB}\Ct^C{}_{CD}\psi^D, \\ 
    \Upsilon_{00} + \Upsilon_A{}^A &= -(n-1)(n-2)(2\Hct+\psi_0)\psi_0 + (2n-4)\et_A(\psi^A) + (n-2)(n-3)\psi_A\psi^A \notag \\
        &\quad + (2n-4)\Ct^A{}_{AB}\psi^B. 
\end{align*}
Equation \eqref{cEEa} indicates that the components of $\Rt_{ab}$ can be expressed as
\begin{equation} \label{Ricci}
    \begin{gathered}
        \Rt_{00} = \frac{n-3}{n-2}\Tb_{00} + \frac{1}{n-2}\Tb_A{}^A + \Upsilon_{00}, \quad \Rt_{0A} = \Tb_{0A} + \Upsilon_{0A}, \\
        \Rt_A{}^A = \frac{n-1}{n-2}\Tb_{00} - \frac{1}{n-2}\Tb_A{}^A + \Upsilon_A{}^A, \quad \Rt_{\langle AB \rangle} = \Tb_{\langle AB \rangle} + \Upsilon_{\langle AB \rangle}.
    \end{gathered}
\end{equation}
Recall that the Riemann curvature tensor $\Rt_{abcd}$ can be calculated using the connection coefficients $\omega_{abc}$ via
\begin{equation} \label{Riemann}
    \Rt_{abcd} = \et_a(\omega_{bcd}) - \et_b(\omega_{acd}) + \eta^{ef}(\omega_{bfc}\omega_{aed} + \omega_{bfa}\omega_{ecd} - \omega_{afc}\omega_{bed} - \omega_{afb}\omega_{ecd}).
\end{equation}
Hence, the Ricci tensor may alternatively be computed via $\Rt_{ac}=\eta^{bd}\Rt_{abcd}$ where $\Rt_{abcd}$ is provided in \eqref{Riemann}. Equating the resulting expressions with those in \eqref{Ricci}, we are able to derive the evolution equations for $\Hct$ and $\Sigmat_{AB}$:
\begin{align}
    \del{t}\Hct &= -\alphat\Hct^{2} + \frac{1}{n-1}\alphat\et_{A}(\Ut^{A}) + \frac{1}{n-1}\alphat\Ut_{A}(\Ut^{A}-\Ct^A{}_B{}^B) - \frac{1}{n-1}\alphat\Sigmat_{AB}\Sigmat^{AB} \notag \\
        &\quad - \frac{(n-3)\alphat}{(n-1)(n-2)}\Tb_{00} - \frac{\alphat}{(n-1)(n-2)}\Tb_A{}^A - \frac{\alphat}{n-1}\Upsilon_{00}, \label{Hct-ev} \\
    \del{t}\Sigmat_{AB} &= -(n-1)\alphat\Hct\Sigmat_{AB} + \alphat\et_{\langle A}(\Ut_{B\rangle}) + \alphat\Ut_{\langle A}\Ut_{B\rangle} - \alphat\et^C(\Ct_{C\expval{AB}}) - \alphat\et_{\langle A}(\Ct_{B\rangle C}{}^C) \notag \\
        &\quad  - \frac{1}{4}\alphat\big( 2\Ct_{CD\langle A}\Ct_{B\rangle}{}^{CD} + 2\Ct_{CD\langle A}\Ct_{B\rangle}{}^{DC} - \Ct_{C\langle A}{}^D\Ct_{B\rangle}{}^C{}_D + \Ct_C{}^D{}_{\langle A}\Ct^C{}_{B\rangle D} \notag \\
        &\quad - \delta_{DE}\Ct_{C\langle A}{}^D\Ct^E{}_{B\rangle}{}^C \big) - \alphat\Ut^C\Ct_{C\expval{AB}} + \alphat\Ct_{CB}{}^B\Ct^C{}_{\expval{AB}} + \alphat(\Tb_{\expval{AB}} + \Upsilon_{\expval{AB}}), \label{Sigmat-ev}
\end{align}
along with two additional constraints:
\begin{align}
    \et_{B}(\Sigmat_A{}^B) - (n-2)\et_{A}(\Hct) + \Ct_{ABC}\Sigmat^{BC} - \Ct_{BC}{}^C\Sigmat_A{}^B - \Tb_{0A} - \Upsilon_{0A} &= 0, \label{momen-cnstr} \\
    2\et_A(\Ct^A{}_B{}^B) + (n-1)(n-2)\Hct^{2} - \Sigmat_{AB}\Sigmat^{AB} - \Ct_{AB}{}^B\Ct^A{}_C{}^C \qquad\qquad& \notag \\
        \quad - \frac{1}{4}\Ct_{ABC}(\Ct^{ABC}+\Ct^{BAC}+\Ct^{ACB}) - 2\Tb_{00} - (\Upsilon_{00}+\Upsilon_A{}^A) &= 0, \label{Hamil-cnstr}
\end{align}
where we note that they represent the momentum and the Hamiltonian constraints, respectively.

\subsection{Time slicing gauge} \label{time-gauge}
To complete the formulation of a closed system, it remains necessary to derive evolution equations for the lapse function $\alphat$ and $\Ut_A$. We begin by specifying the conformal factor $\Phi$ and the time function $t$. In accordance with the discussion in \cite[\S 1.1]{BeyerOliynyk:2024b}, we define
\begin{equation*}
    \Phi = \sqrt{\frac{2}{(n-2)(n-1)}}\,\varphi,
\end{equation*}
which leads to the conformal Einstein-scalar field equations:
\begin{align}
    \Gt_{ab} &= \frac{1}{\tau}\nablat_a\nablat_b\tau, \label{conf-ESF.1} \\
    \Box_{\gt}\tau &= 0, \label{conf-ESF.2}
\end{align}
where $\Gt_{ab}$ denotes the Einstein tensor of $\gt$ and the conformal scalar field $\tau$ is determined via
\begin{equation*}
    \varphi = \sqrt{\frac{n-1}{2(n-2)}}\ln(\tau),
\end{equation*}
which further implies that
\begin{equation} \label{Phi-fix}
    \Phi = \frac{1}{n-2}\ln(\tau).
\end{equation}
We use the conformal scalar field $\tau$ as the time function $t$, that is
\begin{equation} \label{time-fix}
    \tau = t.
\end{equation}
This choice of time-slicing gauge allows for the synchronization of the big bang singularity, as elaborated in \cite[\S 5.7]{BeyerOliynyk:2024b}. From the scalar field equation \eqref{ESF.2} together with \eqref{time-fix}, it follows that
\begin{equation*}
    \Box_{\gt}t = \Box_{\gt}\tau = 0.
\end{equation*}
Consequently, we obtain
\begin{equation*}
    \eta^{ab}\Bigl( \et_a\bigl(\et_b(t)\bigr) - \omega_a{}^c{}_b\et_c(t) \Bigr) = 0.
\end{equation*}
Then, using \eqref{connect-form}, the evolution equation for $\alphat$ becomes
\begin{equation} \label{alphat-ev}
    \del{t}\alphat = (n-1)\Hct\alphat^2.
\end{equation}
Dividing both sides of \eqref{alphat-ev} by $\alphat^2$ and applying $\et_A$ to the resulting equation yields
\begin{align*}
    \et_A\bigl(\et_0(\ln\alphat)\bigr) &= [\et_A,\et_0](\ln\alphat) + \et_0\bigl(\et_A(\ln\alphat)\bigr) \\
        &= -\frac{1}{\alphat}\bigl(\Ut_A\et_0(\alphat) - (\Hct\delta_A^B+\Sigmat_A{}^B)\et_B(\alphat)\bigr) + \et_0\Biggl(\frac{1}{\alphat}\et_A(\alphat)\Biggr) \\
        &= -(n-1)\Hct\Ut_A + (\Hct\delta_A^B+\Sigmat_A{}^B)\Ut_B + \et_0(\Ut_A) \\
        &= (n-1)\et_A(\Hct),
\end{align*}
where we have used \eqref{comm-decomp-0}, \eqref{St-decomp}, \eqref{Dft-cnstr} and \eqref{alphat-ev}. Rearranging the terms leads to the evolution equation for $\Ut_A$:
\begin{equation*}
    \del{t}\Ut_A = (n-1)\alphat\et_A(\Hct) + (n-2)\alphat\Hct\Ut_A - \alphat\Sigmat_A{}^B\Ut_B.
\end{equation*}

\subsection{A complete system and constraints}
Based on the definitions of $\Upsilon_{ab}$ and $\Tb_{ab}$ provided in \eqref{Upsilon-def} and \eqref{Tbab-def}, respectively, and given our gauge choices for the conformal factor and time slicing specified in \eqref{Phi-fix} and \eqref{time-fix}, we find that the components of $\Tb_{ab}$ and $\Upsilon_{ab}$ are expressed as follows:
\begin{equation} \label{Tb-Upsilon-cmpts}
    \begin{gathered} 
        \Tb_{00} = \frac{n-1}{2(n-2)\alphat^2 t^2}, \quad \Tb_{0A} = 0, \quad \Tb_A{}^A = \frac{(n-1)^2}{2(n-2)\alphat^2 t^2}, \quad \Tb_{\expval{AB}} = 0, \quad \Tb_{AB} = \frac{n-1}{2(n-2)\alphat^2 t^2}\delta_{AB}, \\
        \Upsilon_{00} = -\frac{(n-1)\Hct}{\alphat t} - \frac{n-1}{(n-2)\alphat^2 t^2}, \quad \Upsilon_{0A} = -\frac{1}{\alphat t}\Ut_A, \quad \Upsilon_A{}^A = -\frac{(n-1)\Hct}{\alphat t}, \quad \Upsilon_{\expval{AB}} = -\frac{1}{\alphat t}\Sigmat_{AB}.
    \end{gathered}
\end{equation}
Substituting the expressions in \eqref{Tb-Upsilon-cmpts} into \eqref{Hct-ev} to \eqref{Hamil-cnstr}, and incorporating the remaining evolution equations and constraints derived in Sections ~\ref{conf-ESF} and ~\ref{time-gauge}, we obtain the following tetrad formulation of the Einstein-scalar field equations:

\subsubsection*{Evolution equations}
\begin{align}
    \del{t}\et_{A}^{\Omega} &= -\alphat(\Hct\delta_{A}^{B} + \Sigmat_{A}{}^{B})\et_{B}^{\Omega}, \label{EEb.1} \\
    \del{t}\Ct_A{}^C{}_B &= -2\alphat\et_{[A}(\Hct)\delta_{B]}^C - 2\alphat\et_{[A}(\Sigmat_{B]}{}^C) - 2\alphat\Hct \Ut_{[A}\delta_{B]}^C - 2\alphat\Ut_{[A}\Sigmat_{B]}{}^C \notag \\
        &\quad -\alphat\Hct\Ct_A{}^C{}_B + 2\alphat\Sigmat_{[A}{}^D \Ct_{B]}{}^C{}_D + \alphat\Ct_A{}^D{}_B\Sigmat_D{}^C, \label{EEb.2} \\
    \del{t}\Hct &= -\alphat\Hct^{2} + \frac{1}{n-1}\alphat\et_{A}(\Ut^{A}) + \frac{1}{n-1}\alphat\Ut_{A}(\Ut^{A}-\Ct^A{}_B{}^B) - \frac{1}{n-1}\alphat\Sigmat_{AB}\Sigmat^{AB} + \frac{1}{t}\Hct, \label{EEb.3} \\
    \del{t}\Sigmat_{AB} &= -(n-1)\alphat\Hct\Sigmat_{AB} + \alphat\et_{\langle A}(\Ut_{B\rangle}) + \alphat\Ut_{\langle A}\Ut_{B\rangle} - \alphat\et^C(\Ct_{C\expval{AB}}) - \alphat\et_{\langle A}(\Ct_{B\rangle C}{}^C) \notag \\
        &\quad  - \frac{1}{4}\big( 2\alphat\Ct_{CD\langle A}\Ct_{B\rangle}{}^{CD} + 2\alphat\Ct_{CD\langle A}\Ct_{B\rangle}{}^{DC} - \alphat\Ct_{C\langle A}{}^D\Ct_{B\rangle}{}^C{}_D + \alphat\Ct_C{}^D{}_{\langle A}\Ct^C{}_{B\rangle D} \notag \\
        &\quad - \alphat\delta_{DE}\Ct_{C\langle A}{}^D\Ct^E{}_{B\rangle}{}^C \big) - \alphat\Ut^C\Ct_{C\expval{AB}} + \alphat\Ct_{CD}{}^D\Ct^C{}_{\expval{AB}} - \frac{1}{t}\Sigmat_{AB}, \label{EEb.4} \\
    \del{t}\alphat &= (n-1)\Hct \alphat^2, \label{EEb.5} \\
    \del{t}\Ut_A &= (n-1)\alphat\et_A(\Hct) + (n-2)\alphat\Hct\Ut_A - \alphat\Sigmat_A{}^B\Ut_B. \label{EEb.6}
\end{align}

\subsubsection*{Constraint equations}
\begin{align}
    \Aft_{AB}^\Omega &:= 2\et_{[A}(\et_{B]}^{\Omega}) - \Ct_A{}^C{}_B \et_C^\Omega = 0, \label{A-cnstr-2} \\
    \Bft_{AB} &:= 2\et_{[A}(\Ut_{B]}) - \Ct_A{}^C{}_B \Ut_C = 0, \label{B-cnstr-2} \\
    \Cft_{ABC}^D &:= \et_C(\Ct_A{}^D{}_B) + \et_A(\Ct_B{}^D{}_C) + \et_B(\Ct_C{}^D{}_A) \notag \\
        &\quad \, + \Ct_A{}^E{}_B\Ct_C{}^D{}_E + \Ct_B{}^E{}_C\Ct_A{}^D{}_E + \Ct_C{}^E{}_A\Ct_B{}^D{}_E = 0, \label{C-cnstr-2} \\
    \Dft_A &:= \et_{A}(\alphat) - \alphat\Ut_{A} = 0, \label{D-cnstr-2} \\
    \Mft_A &:= \et_{B}(\Sigmat_A{}^B) - (n-2)\et_{A}(\Hct) + \Ct_{ABC}\Sigmat^{BC} - \Ct_{BC}{}^C\Sigmat_A{}^B + \frac{1}{\alphat t}\Ut_A = 0, \label{M-cnstr-2}\\
    \Hft &:= 2\et_A(\Ct^A{}_B{}^B) + (n-1)(n-2)\Hct^{2} - \Sigmat_{AB}\Sigmat^{AB} - \Ct_{AB}{}^B\Ct^A{}_C{}^C \nonumber \\
        &\quad - \frac{1}{4}\Ct_{ABC}(\Ct^{ABC}+\Ct^{BAC}+\Ct^{ACB}) + \frac{2(n-1)}{\alphat t}\Hct = 0. \label{H-cnstr-2}
\end{align}

\section{Fuchsian Formulation} \label{Fuch-form}
In this section, we rewrite the frame formalism of the Einstein-scalar field equations as shown in \eqref{EEb.1} to \eqref{EEb.6} into a suitable form so that we can analyze and expect global existence. The strategy is similar to that in \cite[\S 4.1]{BOZ:2025}, which involves adding multiples of constraint equations and identifying an appropriate symmetrizer. It is worth mentioning here that it is also important to ensure that the solutions are indeed the solutions to the Einstein-scalar field equations, namely, we need the constraint equations to propagate, which we will address later in this article.

\subsection{A system in terms of the rescaled variables}
To recast the system into a suitable form, we introduce a set of rescaled variables designed to vanish on Kasner solutions. First, we observe that, in terms of the tetrad variables $\{\et_A^\Omega, \Ct_{ABC}, \Hct, \Sigmat_{AB}, \alphat, \Ut_A\}$, the conformal Kasner-scalar field solutions \eqref{conf-Kasner} are given by
\begin{equation} \label{Kasner-solns-B}
    \begin{gathered}
        \mathring{\et}_A^\Omega = t^{-r_A/2}\delta^\Omega_A, \quad  \Ct_{ABC} = 0, \quad \mathring{\Ut}_A = 0, \quad \mathring{\alphat} = t^{r_0/2}, \\
        \mathring{\Hct} = \frac{r_0}{2(n-1)}t^{-r_0/2-1}, \quad \mathring{\Sigmat}_{AB} = \Biggl(\frac{1}{2}r_{AB} - \frac{r_0}{2(n-1)}\delta_{AB}\Biggr)t^{-r_0/2-1},
    \end{gathered}
\end{equation}
where
\begin{equation} \label{rAB-def-1}
    r_{AB} = \begin{cases}
                 r_A, &\text{ if } A=B \\ 
                  0,  &\text{ if } A \neq B
             \end{cases}.
\end{equation}
As previously noted, spatial indices are raised and lowered using $\delta_{AB}$, i.e.
\begin{equation} \label{rAB-def-2}
    r_A^B = \delta^{BC}r_{AC}.
\end{equation}
The new variables are then defined as follows:
\begin{align}
    \Hc &= t \alphat\Hct - \frac{r_0}{2(n-1)}, \label{Hc-def} \\
    \Sigma_{AB} &= t \alphat\Sigmat_{AB} - \biggl(\frac{1}{2}r_{AB} - \frac{r_0}{2(n-1)}\delta_{AB}\biggr), \label{Sigma-def} \\
    \alpha &= t^{\ep_1}\alphat, \label{alpha-def} \\
    e_A^\Omega &= t^{\ep_2}\alphat \et_A^\Omega, \label{e-def} \\
    U_A &= t \alphat\Ut_A, \label{U-def} \\
    C_{ABC} &= t \alphat\Ct_{ABC}, \label{C-def}
\end{align}
where $\ep_1, \ep_2 \in\Rbb$ are constants that satisfy the following inequalities
\begin{equation} \label{ep-con-1}
    \ep_1+\frac{r_0}{2}>0, \quad 0<\ep_2<1 \quad \text{and} \quad \ep_2+\frac{r_0}{2}-\frac{r_A}{2}>0.
\end{equation}

\begin{rem} \label{W-Kasner-limit}
    If we set $W=(\Hc,\Sigma_{AB},\alpha,e_A^\Omega,U_A,C_{ABC})^{\tr}$, then from \eqref{Kasner-solns-B}, we see that
    \begin{equation*}
        \lim_{t\searrow0} W\big|_{Kasner} = \lim_{t\searrow0}\,(0, 0, t^{\ep_1+\frac{r_0}{2}}, t^{\ep_2+\frac{r_0}{2}-\frac{r_A}{2}}\delta_A^\Omega, 0, 0)^{\tr} = 0.
    \end{equation*}
    The importance of the above fact is that although we do not rescaled $\alphat$ and $\et_A^\Omega$ in a way that $\alpha$ and $e_A^\Omega$ will vanish on exact Kasner solutions, we can still ensure that if we make small enough perturbation around an exact Kasner solution at a small enough time, this perturbation can be treated as a small perturbation around the zero solution, and there is no loss of generality assuming the starting time is small enough. Indeed, by Cauchy stability, any perturbation of $W$ at a finite time $t_0>0$ can be used to get a perturbation of $W$ at any smaller time $t_1\in(0,t_0)$, which can also be guaranteed to be small enough provided the initial perturbation is small enough.
\end{rem}

A direct computation reveals that the evolution equations \eqref{EEb.1} to \eqref{EEb.6} and the constraint equations \eqref{A-cnstr-2} to \eqref{H-cnstr-2} can be expressed in terms of the rescaled variables defined in \eqref{Hc-def} to \eqref{C-def} as

\subsubsection*{Evolution equations}
\begin{align}
    \del{t}e_A^\Omega &= \frac{1}{t}\biggl(\ep_2 + \frac{r_0}{2} - \frac{r_A}{2}\biggr)e_A^\Omega + \frac{1}{t}\Bigl((n-2)\Hc\delta_A^B-\Sigma_A{}^B\Bigr)e_B^\Omega, \label{EEc.1} \\
    \del{t}C_{ABC} &= -2t^{-\ep_2}e_{[A}(\Hc)\delta_{C]B} - 2t^{-\ep_2}e_{[A}(\Sigma_{C]B}) + \frac{1}{t}\biggl(1+\frac{r_0}{2}-\frac{1}{2}(r_A+r_C-r_B)\biggr)C_{ABC} \notag \\
        &\quad + \frac{1}{t}[(\Hc+\Sigma)*C]_{ABC}\footnotemark, \label{EEc.2} \\
    \del{t}\Hc &= -\frac{2}{n-1}t^{-\ep_2}e_A(C^A{}_B{}^B) + \frac{1}{n-1}t^{-\ep_2}e_A(U^A) + \frac{1}{t}[(U+C)*(U+C)], \label{EEc.3} \\
    \del{t}\Sigma_{AB} &= t^{-\ep_2}e_{\langle A}(U_{B\rangle}) - t^{-\ep_2}e^C(C_{C\expval{AB}}) - t^{-\ep_2}e_{\langle A}(C_{B\rangle C}{}^C) + \frac{1}{t}[(U+C)*(U+C)]_{AB}, \label{EEc.4} \\
    \del{t}\alpha &= \frac{1}{t}\biggl(\ep_1 + \frac{r_0}{2}\biggr)\alpha + \frac{n-1}{t}\alpha \Hc, \label{EEc.5} \\
    \del{t}U_A &= (n-1)t^{-\ep_2}e_A(\Hc) + \frac{1}{t}\biggl(1+\frac{r_0}{2}-\frac{r_A}{2}\biggr)U_A + \frac{1}{t}[(\Hc+\Sigma)*U]_A. \label{EEc.6}
\end{align}
\footnotetext{For $*$ notation, see section~\ref{Index}.}

\subsubsection*{Constraints}
\begin{align}
    \Af_{AB}^\Omega &:= t^{1+\ep_2}\alphat^2\Aft_{AB}^\Omega = 2t^{1-\ep_2}e_{[A}(e_{B]}^\Omega) - 2U_{[A}e_{B]}^\Omega - C_A{}^C{}_B e_C^\Omega = 0, \label{A-cnstr-3} \\
    \Bf_{AB} &:= t^{2}\alphat^2\Bft_{AB} = 2t^{1-\ep_2}e_{[A}(U_{B]}) - 2U_{[A}U_{B]} - C_A{}^C{}_B U_C = 0, \label{B-cnstr-3} \\
    \Cf_{ABC}^D &:= t^2\alphat^2\Cft_{ABC}^D = t^{1-\ep_2}e_C(C_A{}^D{}_B) + t^{1-\ep_2}e_A(C_B{}^D{}_C) + t^{1-\ep_2}e_B(C_C{}^D{}_A) \notag \\
        &\qquad\qquad\qquad\quad - U_C C_A{}^D{}_B - U_A C_B{}^D{}_C - U_B C_C{}^D{}_A \notag \\
        &\qquad\qquad\qquad\quad + C_A{}^E{}_B C_C{}^D{}_E + C_B{}^E{}_C C_A{}^D{}_E + C_C{}^E{}_A C_B{}^D{}_E = 0, \label{C-cnstr-3} \\
    \Df_A &:= t^{1+\ep_1}\alphat^2\Dft_A = \alpha U_A - t^{1-\ep_2}e_A(\alpha) = 0, \label{D-cnstr-3} \\
    \Mf_A &:= t^2\alphat^2\Mft_A = t^{1-\ep_2}e_B(\Sigma_A{}^B) - (n-2)t^{1-\ep_2}e_A(\Hc) + \biggl(\frac{r_0}{2}+1-\frac{r_A}{2}\biggr)U_A \notag \\
        &\quad + \frac{1}{2}r^{BC}C_{ABC} - \frac{1}{2}r_A^B C_{BC}{}^C - \Sigma_A{}^B U_B + (n-2)\Hc U_A + C_{ABC}\Sigma^{BC} - C_{BC}{}^C\Sigma_A{}^B = 0, \label{M-cnstr-3} \\
    \Hf &:= t^2\alphat^2\Hft = 2t^{1-\ep_2}e_A(C^A{}_B{}^B) + (n-1)(n-2)\Hc^2 + (n-2)r_0\Hc + 2(n-1)\Hc - r_{AB}\Sigma^{AB} \notag \\
        &\quad - \Sigma_{AB}\Sigma^{AB} - 2U_A C^A{}_B{}^B - C_{AB}{}^B C^A{}_C{}^C - \frac{1}{4}C_{ABC}(C^{ABC}+C^{BAC}+C^{ACB}) = 0. \label{H-cnstr-3}
\end{align}

\begin{rem}
    To derive \eqref{EEc.3}, we first express \eqref{EEb.3} in terms of the rescaled variables which yields an equation of the form $\del{t}\Hc=Q$. We then add the term $-\frac{1}{(n-1)t}\Hf$ to the right hand side, resulting in $\del{t}\Hc=Q-\frac{1}{(n-1)t}\Hf$, to arrive at \eqref{EEc.3}. Since the Hamiltonian constraint $\Hf$ vanishes for initial data satisfying the constraint equations, the systems \eqref{EEb.3} and \eqref{EEc.3} are physically equivalent. For the same reason, we employ \eqref{D-cnstr-2} to eliminate $\et_A(\alphat)$. Another reason for the modification of $\del{t}\Hc$ is to remove problematic terms, like $\Hc^2$, in the Fuchsian perspective - that is, the product of the non-decaying terms. For further discussions, we refer to \cite[\S 3.4, \S 3.5]{BOOS:2021}.
\end{rem}

\subsection{Equations for lower order terms} \label{low-order}
To successfully derive the Fuchsian system, two distinct versions of the evolution equations \eqref{EEc.1} to \eqref{EEc.6} are required, following an approach analogous to that in \cite[\S 5]{BOZ:2025}. We now express the system of equations \eqref{EEc.1} to \eqref{EEc.6} in matrix form as
\begin{equation} \label{Fuch-1}
    \del{t}W = \frac{1}{t^{\ep_2}}e_D^\Lambda E^D \del{\Lambda}W + \frac{1}{t}\Bc\Pbb W + \frac{1}{t}F_l,
\end{equation}
where
\begin{equation} \label{W-def}
    W = (e_P^\Sigma, \alpha, C_{PQR}, U_P, \Hc, \Sigma_{PQ})^{\tr},
\end{equation}
\begin{equation} \label{Pbb-def}
    \Pbb = \diag(\delta_A^P\delta_\Sigma^\Omega, 1, \delta_A^P\delta_B^Q\delta_C^R, \delta_A^P, 0, 0),
\end{equation}
\begin{equation} \label{Bc-def}
    \Bc = \diag\Bigl(\bigl(\kappa_2\delta_A^P-\frac{1}{2}r_A^P\bigr)\delta_\Sigma^\Omega, \kappa_1, \kappa_0\delta_A^P\delta_B^Q\delta_C^R-\frac{1}{2}(r_A^P\delta_B^Q\delta_C^R+r_C^R\delta_A^P\delta_B^Q-r_B^Q\delta_A^P\delta_C^Q), \kappa_0\delta_A^P-\frac{1}{2}r_A^P, \kappa_0, \kappa_0\delta_A^P\delta_B^Q\Bigr),
\end{equation}
with
\begin{equation*} \label{kappa-def}
    \kappa_0 = 1+\frac{r_0}{2}, \quad \kappa_1 = \ep_1+\frac{r_0}{2}, \quad \kappa_2 = \ep_2+\frac{r_0}{2},
\end{equation*}
and
\begin{equation*}
    \renewcommand{\arraystretch}{1.2}
    E^D = \begin{bmatrix}
        0& 0& 0& 0& 0& 0& \\
        0& 0& 0& 0& 0& 0& \\
        0& 0& 0& 0& -2\delta_{[A}^D\delta_{C]B}& -2\delta_{[A}^D\delta_{C]}^{\langle P}\delta_B^{Q\rangle}& \\
        0& 0& 0& 0& (n-1)\delta_A^D& 0& \\
        0& 0& -\frac{2}{n-1}\delta^{D[P}\delta^{R]Q}& \frac{1}{n-1}\delta^{DP}& 0& 0& \\
        0& 0& -\delta^{D[P}\delta_{\langle A}^{R]}\delta_{B\rangle}^Q-\delta_{\langle A}^D\delta_{B\rangle}^{[P}\delta^{R]Q}& \delta_{\langle A}^D\delta_{B\rangle}^P& 0& 0
    \end{bmatrix},
\end{equation*}
\begin{equation} \label{Fl-def}
    F_l = \begin{bmatrix}
        [(\Hc+\Sigma)*e]& \\
        (n-1)\alpha\Hc& \\
        [(\Hc+\Sigma)*C]& \\
        [(\Hc+\Sigma)*U]& \\
        [(U+C)*(U+C)]& \\
        [(U+C)*(U+C)]&
    \end{bmatrix}.
\end{equation}
The conditions \eqref{r0-bound}, \eqref{rA-bound}, \eqref{sub-cond-2}, and \eqref{ep-con-1} ensure that the entries of the matrix $\Bc$ are strictly positive, which is important for our future analysis since it allows for the identification of the decaying terms in the system. For use below, we define the complementary projection operator of $\Pbb$ in \eqref{Pbb-def} by
\begin{equation*} \label{Pbbp-def}
    \Pbb^\perp = \id - \Pbb.
\end{equation*}
A key observation regarding the nonlinear terms $\Fb$ is that they satisfy the decomposition
\begin{equation} \label{Fb-decomp}
    \Pbb F = \Pbb W*\Pbb^{\perp}W \AND \Pbb^{\perp}F = \Pbb W*\Pbb W.
\end{equation}

Given that $\ep_2<1$ from \eqref{ep-con-1}, we choose an arbitrary positive number $\nu$ such that
\begin{equation} \label{nu-con}
    \ep_2 + \nu < 1,
\end{equation}
and define the lower order variables as the rescaled spatial derivatives of $W$ via
\begin{equation} \label{Wbc-def}
    W_\bc := t^{|\bc|\nu}\partial^{\bc}W = t^{|\bc|\nu}(\partial^{\bc}e_P^\Sigma, \partial^{\bc}\alpha, \partial^{\bc}C_{PQR}, \partial^{\bc}U_P, \partial^{\bc}\Hc, \partial^{\bc}\Sigma_{PQ})^{\tr}, \quad |\bc|<k,
\end{equation}
where $\bc=(\bc_1,...,\bc_{n-1})$ is a multiindex, cf. Section~\ref{Index} and $k\in\Nbb$ is a positive integer to be determined. We set $W_{(0,0,...,0)}=W$. Using \eqref{Fuch-1} and the fact that $B^D$, $\Bc$ and $\Pbb$ are constant matrices, the evolution equation for $W_\bc$ is given by
\begin{equation*}
    \del{t}W_\bc = \frac{1}{t}(|\bc|\nu\id+\Bc\Pbb)W_\bc + t^{|\bc|\nu-\ep_2}B^D \biggl(\sum_{\bc'\leq\bc}\binom{\bc}{\bc'}\partial^{\bc-\bc'}e_D^\Lambda\cdot\partial^{\bc'}\del{\Lambda}W\biggr) + t^{|\bc|\nu-1}\partial^\bc F, \quad |\bc|<k.
\end{equation*}
Since $F$ is quadratic in $W$ and satisfies \eqref{Fb-decomp}, the above equation can be written as
\begin{equation} \label{Fuch-2}
    \del{t}W_\bc = \frac{1}{t}(|\bc|\nu\id+\Bc\Pbb)W_\bc + \frac{1}{t^{\ep_2+\nu}}\sum_{\substack{\bc'\leq\bc\\|\ac|=1}}\Pbb W_{\bc-\bc'}*W_{\bc'+\ac} + \frac{1}{t}\sum_{\bc'\leq\bc}\Pbb W_{\bc-\bc'}*(\Pbb W_{\bc'}+\Pbb^{\perp}W_{\bc'}),  \quad |\bc|<k.
\end{equation}
We can see from \eqref{Fuch-2} that we are actually using ODEs to govern the lower order terms $W_\bc$ with $|\bc|<k$. From the definition of $\Bc$ in \eqref{Bc-def}, we know that the matrix $|\bc|\nu\id+\Bc\Pbb$ is a diagonal matrix. In the Fuchsian sense, see \cite[\S 3.4, \S 3.5]{BOOS:2021}, the components of $W_\bc$ corresponding to the zero diagonal entries in $|\bc|\nu\id+\Bc\Pbb$ are those variables that will decay to zero as $t\searrow0$, while the components corresponding to the strictly positive diagonal entries of $|\bc|\nu\id+\Bc\Pbb$ are the variables that converge to functions that do not vanish everywhere, and in the present context, the converging variables are $\Hc$ and $\Sigma_{AB}$.

\subsection{Equations for the highest order terms} \label{high-order}
We see in \eqref{Fuch-2} that there are terms with derivative up to order $k$. Then to obtain a closed system, in this section, we derive a symmetric hyperbolic formulation for the highest order term
\begin{equation*}
    W_\bc := t^{|\bc|\nu}\partial^{\bc}W = t^{|\bc|\nu}(\partial^{\bc}e_P^\Sigma, \partial^{\bc}\alpha, \partial^{\bc}C_{PQR}, \partial^{\bc}U_P, \partial^{\bc}\Hc, \partial^{\bc}\Sigma_{PQ})^{\tr}, \quad |\bc|=k.
\end{equation*}
To achieve this, we begin with a modified system. Specifically, we exploit the freedom to add multiples of the constraints. Rather than considering \eqref{EEc.2} and \eqref{EEc.6}, namely $\del{t}C_{ABC}=P_{ABC}$ and $\del{t}U_A=R_A$, we instead adopt the expressions $\del{t}C_{ABC}=P_{ABC}+\frac{\mu}{t}\Mf_{[A}\delta_{C]B}$ and $\del{t}U_A=R_A+\frac{\gamma}{t}\Mf_A$. This leads to the following modified system:
\begin{align}
    \del{t}e_A^\Omega &= \frac{1}{t}\biggl(\ep_2 + \frac{r_0}{2} - \frac{r_A}{2}\biggr)e_A^\Omega + \frac{1}{t}[(\Hc+\Sigma)*e]_A^\Omega, \label{EEd.1} \\
    \del{t}C_{ABC} &= -2t^{-\ep_2}e_{[A}(\Sigma_{C]B}) + \mu t^{-\ep_2}e_D(\Sigma_{[A}{}^D)\delta_{C]B} - (\mu n-2\mu+2)t^{-\ep_2}e_{[A}(\Hc)\delta_{C]B} \notag \\
        &\quad + \frac{1}{t}\biggl(1+\frac{r_0}{2}-\frac{1}{2}(r_A+r_C-r_B)\biggr)C_{ABC} - \frac{\mu}{2t}r^{DE}C_{ED[A}\delta_{C]B} - \frac{\mu}{2t}r_{[A}^D\delta_{C]B}C_{DE}{}^E \notag \\
        &\quad + \frac{\mu}{t}\Bigl(\frac{r_0}{2}+1-\frac{r_A}{2}\Bigr)U_{[A}\delta_{C]B} + \frac{1}{t}[(\Hc+\Sigma)*(U+C)]_{ABC}, \label{EEd.2} \\
    \del{t}\Hc &= -\frac{2}{n-1}t^{-\ep_2}e_A(C^A{}_B{}^B) + \frac{1}{n-1}t^{-\ep_2}e_A(U^A) + \frac{1}{t}[(U+C)*(U+C)], \label{EEd.3} \\
    \del{t}\Sigma_{AB} &= t^{-\ep_2}e_{\langle A}(U_{B\rangle}) - t^{-\ep_2}e^C(C_{C\expval{AB}}) - t^{-\ep_2}e_{\langle A}(C_{B\rangle C}{}^C) + \frac{1}{t}[(U+C)*(U+C)]_{AB}, \label{EEd.4} \\
    \del{t}\alpha &= \frac{1}{t}\biggl(\ep_1 + \frac{r_0}{2}\biggr)\alpha + \frac{n-1}{t}\alpha \Hc, \label{EEd.5} \\
    \del{t}U_A &= \bigl((n-1)-\gamma(n-2)\bigr)t^{-\ep_2}e_A(\Hc) + \gamma t^{-\ep_2}e_B(\Sigma_A{}^B) + \frac{\gamma+1}{t}\Big(1+\frac{r_0}{2}-\frac{r_A}{2}\Big)U_A \notag \\
        &\quad + \frac{\gamma}{2t}r^{BC}C_{ABC} - \frac{\gamma}{2t}r_A^B C_{BC}{}^C + \frac{1}{t}[(\Hc+\Sigma)*U]_A + \frac{1}{t}[\Sigma*C]_A. \label{EEd.6}
\end{align}
Collecting \eqref{EEd.1} to \eqref{EEd.6}, we express the evolution equations in matrix form as
\begin{equation} \label{Fuch-3}
    \del{t}W + \frac{1}{t^{\ep_2}}e_D^\Lambda A^D \del{\Lambda}W = \frac{1}{t}\Ac W + \frac{1}{t}F_h,
\end{equation}
where $W$ is defined in \eqref{W-def} and
\begin{equation*}
    A^D = - {\small\renewcommand{\arraystretch}{1.2}
    \begin{bmatrix}
        0& 0& 0& 0& 0& 0& \\
        0& 0& 0& 0& 0& 0& \\
        0& 0& 0& 0& -(\mu n-2\mu+2)\delta_{[A}^D\delta_{C]B}& -2\delta_{[A}^D\delta_{C]}^{\langle P}\delta_B^{Q\rangle}+\mu \delta^{D\langle P}\delta_{[A}^{Q\rangle}\delta_{C]B}& \\
        0& 0& 0& 0& \big((n-1)-\gamma(n-2)\big)\delta_A^D& \gamma \delta^{D\langle P}\delta_A^{Q\rangle}& \\
        0& 0& -\frac{2}{n-1}\delta^{D[P}\delta^{R]Q}& \frac{1}{n-1}\delta^{DP}& 0& 0& \\
        0& 0& -\delta^{D[P}\delta_{\langle A}^{R]}\delta_{B\rangle}^Q-\delta_{\langle A}^D\delta_{B\rangle}^{[P}\delta^{R]Q}& \delta_{\langle A}^D\delta_{B\rangle}^P& 0& 0
    \end{bmatrix}},
\end{equation*}
\begin{equation*}
    \Ac = \begin{bmatrix}
        \Bigl(\kappa_2\delta_A^P-\frac{1}{2}r_A^P\Bigr)\delta_\Sigma^\Omega& 0& 0& 0& 0& 0& \\
        0& \kappa_1& 0& 0& 0& 0& \\
        0& 0& \Ac_{33}& \Ac_{34}& 0& 0& \\
        0& 0& \Ac_{43}& \Ac_{44}& 0& 0& \\
        0& 0& 0& 0& 0& 0& \\
        0& 0& 0& 0& 0& 0&
    \end{bmatrix},
\end{equation*}
with
\begin{gather*}
    \Ac_{33} = \kappa_0\delta_A^P\delta_B^Q\delta_C^R-\frac{1}{2}(r_A^P\delta_B^Q\delta_C^R+r_C^R\delta_A^P\delta_B^Q-r_B^Q\delta_A^P\delta_C^Q) - \frac{\mu}{2}r^{PQ}\delta_{[A}^R\delta_{C]B} - \frac{\mu}{2}r_{[A}^P\delta_{C]B}\delta^{QR}, \\
    \Ac_{34} = \mu\Bigl(\kappa_0\delta_{[A}^P\delta_{C]B} - \frac{1}{2}r_{[A}^P\delta_{C]B}\Bigr), \\
    \Ac_{43} = \frac{\gamma}{2}r^{QR}\delta_A^P - \frac{\gamma}{2}r_A^P \delta^{QR}, \\
    \Ac_{44} = (\gamma+1)\Bigl(\kappa_0\delta_A^P - \frac{1}{2}r_A^P\Bigr),
\end{gather*}
and
\begin{equation*}
    F_h = \begin{bmatrix}
        [(\Hc+\Sigma)*e] \\
        (n-1)\alpha\Hc \\
        [(\Hc+\Sigma)*(U+C)] \\
        [(\Hc+\Sigma)*U] + [\Sigma*C] \\
        [(U+C)*(U+C)] \\
        [(U+C)*(U+C)]
    \end{bmatrix}.
\end{equation*}
Spatially differentiating \eqref{Fuch-3} yields
\begin{equation} \label{Fuch-4}
    \del{t}W_\bc + \frac{1}{t^{\ep_2}}e_D^\Lambda A^D \del{\Lambda}W_\bc = \frac{1}{t}(k\nu\id+\Ac)W_\bc + \frac{1}{t^{\ep_2+\nu}}\sum_{\substack{\bc'\leq\bc\\|\ac|=1}}\Pbb W_{\bc-\bc'}*W_{\bc'+\ac} + \frac{1}{t}\sum_{\bc'\leq\bc}\Pbb W_{\bc-\bc'}*(\Pbb W_{\bc'}+\Pbb^{\perp}W_{\bc'}),
\end{equation}
for $|\bc|=k$.

\subsection{Symmetrization} \label{Symmetrization}
This section draws upon the matrix identities \eqref{dd-1} to \eqref{dd-pi2} at several stages of the calculation. To transform \eqref{Fuch-4} into a symmetric hyperbolic system, we introduce a change of variables via
\begin{equation} \label{V-def}
    V = \begin{bmatrix}
        \delta_A^P\delta_\Omega^\Sigma& 0& 0& 0& 0& 0& \\
        0& 1& 0& 0& 0& 0& \\
        0& 0& a\delta_A^P\delta_B^Q\delta_C^R& b\delta_{[A}^P\delta_{C]B}& 0& 0& \\
        0& 0& c\delta_A^{[P}\delta^{R]Q}& d\delta_A^P& 0& 0& \\
        0& 0& 0& 0& 1& 0& \\
        0& 0& 0& 0& 0& \delta_A^P\delta_B^Q
    \end{bmatrix}.
\end{equation}
Following the discussion on the Schur complement in Chapter 0.8.5 of \cite{HornJohnson:2013}, to ensure the invertibility of $V$, the invertibility of $V$ requires the matrices $d\delta_A^P$ and
\begin{equation} \label{M*-def}
    M_* := a\delta_A^P\delta_B^Q\delta_C^R -bd^{-1}c\delta_{[A}^E\delta_{C]B}\delta_E^{[P}\delta^{R]Q}
\end{equation}
to be invertible. Under these conditions, the inverse of $V$ is given by
\begin{equation} \label{Vinvs-def}
    V^{-1} = \begin{bmatrix}
        \delta_A^P\delta_\Omega^\Sigma& 0& 0& 0& 0& 0& \\
        0& 1& 0& 0& 0& 0& \\
        0& 0& M_*^{-1}& -bd^{-1}M_*^{-1}\delta_{[A}^P\delta_{C]B}& 0& 0& \\
        0& 0& -cd^{-1}\delta_A^{[P}\delta^{R]Q}M_*^{-1}& d^{-1}\delta_A^P+bcd^{-2}\delta_A^{[D}\delta^{F]E}M_*^{-1}\delta_{[A}^P\delta_{C]B}& 0& 0& \\
        0& 0& 0& 0& 1& 0& \\
        0& 0& 0& 0& 0& \delta_A^P\delta_B^Q&
    \end{bmatrix}.
\end{equation}
Defining
\begin{equation} \label{Whbc-def}
    \Wt_\bc = V^{-1}W_\bc,
\end{equation}
and introducing the matrix
\begin{equation} \label{Sc-def}
    \Sc = \begin{bmatrix}
        \delta_A^P\delta_\Omega^\Sigma& 0& 0& 0& 0& 0& \\
        0& 1& 0& 0& 0& 0& \\
        0& 0& p\delta_A^P\delta_B^Q\delta_C^R& q\delta_{[A}^P\delta_{C]B}& 0& 0& \\
        0& 0& s\delta_A^{[P}\delta^{R]Q}& u\delta_A^P& 0& 0& \\
        0& 0& 0& 0& h& 0& \\
        0& 0& 0& 0& 0& l\delta_A^P\delta_B^Q&
    \end{bmatrix}
\end{equation}
with $h, l>0$, we use \eqref{Whbc-def} to rewrite \eqref{Fuch-4} as
\begin{equation} \label{Fuch-5}
    B^0\del{t}\Wt_\bc + \frac{1}{t^{\ep_2}}e_D^\Lambda B^D \del{\Lambda}\Wt_\bc = \frac{1}{t}\Bsc\Wt_\bc + \frac{1}{t^{\ep_2+\nu}}\sum_{\substack{\bc'\leq\bc\\|\ac|=1}}\Pbb W_{\bc-\bc'}*W_{\bc'+\ac} + \frac{1}{t}\sum_{\bc'\leq\bc}\Pbb W_{\bc-\bc'}*(\Pbb W_{\bc'}+\Pbb^{\perp}W_{\bc'}),
\end{equation}
for $|\bc|=k$, where
\begin{equation} \label{Bsc-def}
    \Bsc = k\nu B^0 + \Sc\Ac V,
\end{equation}
\begin{equation} \label{B0-def}
    B^0 := \Sc V = \begin{bmatrix}
       \delta_A^P\delta_\Omega^\Sigma& 0& 0& 0& 0& \\
        0& 1& 0& 0& 0& \\
        0& 0& \Btt_0& 0& 0& \\
        0& 0& 0& h& 0& \\
        0& 0& 0& 0& l\delta_A^P\delta_B^Q&
    \end{bmatrix}
\end{equation}
with
\begin{equation} \label{Btt0-def}
    \Btt_0 = \begin{pmatrix}
    ap\delta_A^P\delta_B^Q\delta_C^R+cq\delta_{[A}^E\delta_{C]B}\delta_E^{[P}\delta^{R]Q}& (bp+dq)\delta_{[A}^P\delta_{C]B}& \\
    (as+cu)\delta_A^{[P}\delta^{R]Q}& \Bigl(\frac{bs(n-2)}{2}+du\Bigr)\delta_A^P&
    \end{pmatrix},
\end{equation}
and
\begin{equation} \label{BD-def}
    B^D := \Sc A^D V = - \begin{bmatrix}
        0& 0& 0& \\
        0& 0& \Btt_1& \\
        0& \Btt_2& 0&
    \end{bmatrix}
\end{equation}
with
\begin{equation} \label{Btt1-def}
    \Btt_1 = \begin{pmatrix}
        \Bigl(-p(\mu n-2\mu+2)+q\bigl((n-1)-\gamma(n-2)\bigr)\Bigr)\delta_{[A}^D\delta_{C]B}& -2p M_3+\bigl(\mu p+q\gamma\bigr)\delta^{D\langle P}\delta_{[A}^{Q\rangle}\delta_{C]B}& \\
        \Bigl(-\frac{n-2}{2}(\mu n-2\mu+2)s+u\bigl((n-1)-\gamma(n-2)\bigr)\Bigr)\delta_A^D& \Bigl(s+\frac{n-2}{2}\mu s+u\gamma\Bigr)\delta^{D\langle P}\delta_A^{Q\rangle}&
    \end{pmatrix}
\end{equation}
and
\begin{equation} \label{Btt2-def}
    \Btt_2 = \begin{pmatrix}
        \Bigl(-\frac{2ah}{n-1}+\frac{ch}{n-1}\Bigr)\delta^{D[P}\delta^{R]Q},& \Bigl(-\frac{bh(n-2)}{n-1}+\frac{dh}{n-1}\Bigr)\delta^{DP}& \\
        -al M_3^{\tr}+(c-a)l \delta_{\langle A}^D\delta_{B\rangle}^{[P}\delta^{R]Q}& \Bigl(-\frac{bl(n-3)}{2}+dl\Bigr)\delta_{\langle A}^D\delta_{B\rangle}^P&
    \end{pmatrix}.
\end{equation}
We will examine $\Bsc$ after we assign proper values for the parameters. In \eqref{BD-def}, the requirement that $B^D$ be symmetric is equivalent to the condition that $\Btt_2^{\tr}=\Btt_1$. It is evident that the matrices $\delta_{[A}^D\delta_{C]}^{\langle P}\delta_B^{Q\rangle}$ and $\delta^{D\langle P}\delta_{[A}^{Q\rangle}\delta_{C]B}$ are linearly independent. Therefore, from \eqref{Btt1-def}, \eqref{Btt2-def} and the adjoint relations for $\delta_{[A}^D\delta_{C]}^{\langle P}\delta_B^{Q\rangle}$ and $\delta^{D\langle P}\delta_{[A}^{Q\rangle}\delta_{C]B}$ given in \eqref{M3-adj} and \eqref{M4-adj}, we have
\begin{align*}
    -p(\mu n-2\mu+2)+q\bigl((n-1)-\gamma(n-2)\bigr) &= -\frac{2ah}{n-1}+\frac{ch}{n-1}, \\ 
    2p &= al, \\ 
    \mu p+q\gamma &= (c-a)l, \\ 
    -\frac{n-2}{2}(\mu n-2\mu+2)s+u\bigl((n-1)-\gamma(n-2)\bigr) &= -\frac{bh(n-2)}{n-1}+\frac{dh}{n-1}, \\ 
    s+\frac{n-2}{2}\mu s+u\gamma &= -\frac{bl(n-3)}{2}+dl. 
\end{align*}
Solving the above equations for $p$, $q$, $s$, $u$ and $a$, we obtain that
\begin{align}
    p &= \frac{cl\Bigl(l(n-1)\bigl(1+n(\gamma-1)-2\gamma\bigr)+h\gamma\Bigr)}{4h\gamma+l(n-1)\bigl(2-6\gamma+n(2\gamma-\mu-2)+\mu\bigr)}, \label{p-def} \\
    q &= \frac{cl\Bigl(h(\mu-2)+l(n-1)\bigl(2+(n-2)\mu\bigr)\Bigr)}{4h\gamma+l(n-1)\bigl(2-6\gamma+n(2\gamma-\mu-2)+\mu\bigr)}, \label{q-def} \\
    s &= \frac{1}{(n-1)^2\bigl(2+(n-2)\mu\bigr)}\biggl(-2dl(n-1)\bigl(1+n(\gamma-1)-2\gamma\bigr) \notag \\
        &\quad - 2dh\gamma+b\Bigl(l(n^2-4n+3)\bigl(1+n(\gamma-1)-2\gamma\bigr)+2h(n-2)\gamma\Bigr)\biggr), \label{s-def} \\
    u &= \frac{-b(n-2)\bigl(2h+l(n^2-4n+3)\bigr)+2d\bigl(h+l(n^2-3n+2)\bigr)}{2(n-1)^2}, \label{u-def} \\
    a &= \frac{2c\Bigl(l(n-1)\bigl(1+n(\gamma-1)-2\gamma\bigr)+h\gamma\Bigr)}{4h\gamma+l(n-1)\bigl(2-6\gamma+n(2\gamma-\mu-2)+\mu\bigr)}. \label{a-def}
\end{align}
To simplify the analysis and streamline subsequent calculations, it is convenient to impose the following choices in advance:
\begin{equation} \label{hl-fix}
    h = 1 \AND l = \frac{1}{n-1}.
\end{equation}
Substituting \eqref{hl-fix} into \eqref{p-def} to \eqref{a-def} yields
\begin{gather}
    p = \frac{c(\gamma-1)}{(n-1)(2\gamma-\mu-2)}, \quad q = -\frac{c\mu}{(n-1)(2\gamma-\mu-2)}, \notag \\
    s = \frac{b\bigl(3+n(\gamma-1)-2\gamma\bigr)-2d(\gamma-1)}{(n-1)\bigl(2+(n-2)\mu\bigr)}, \quad u = \frac{2d-b(n-2)}{2(n-1)}, \quad a = \frac{2c(\gamma-1)}{2\gamma-\mu-2}. \label{pqsua}
\end{gather}
The symmetry of $B^0$, as implied by \eqref{B0-def} and \eqref{Btt0-def}, requires that
\begin{equation} \label{B0-sym1}
    bp+dq = as+cu.
\end{equation}
Inserting the expressions from \eqref{pqsua} into \eqref{B0-sym1} shows that the symmetry condition for $B^0$ is satisfied provided
\begin{equation} \label{B0-sym2}
    \frac{c\bigl(-4d(\gamma-1)+b(n-2)(2\gamma-\mu-2)\bigr)\bigl(4-2\gamma+(n-2)\mu\bigr)}{2(n-1)(2\gamma-\mu-2)\bigl(2+(n-2)\mu\bigr)} = 0.
\end{equation}
Solving \eqref{B0-sym2} for $\gamma$ gives
\begin{equation} \label{gamma-mu}
    \gamma = \frac{n-2}{2}\mu+2.
\end{equation}
It follows directly from \eqref{B0-def} that $B^0$ is positive definite as long as $\Btt_0$ is positive definite positive. Moreover, according to the discussion in \cite[Eq.~(7.7.5)]{HornJohnson:2013}, the positive definiteness of $\Btt_0$ requires
\begin{gather}
    \Biggl(\frac{bs(n-2)}{2}+du\Biggr)\delta_A^P > 0, \label{Btt0-pos-1a} \\
    ap\delta_A^P\delta_B^Q\delta_C^R+\biggl(cq - \Bigl(bp+dq\Bigr) \Bigl(\frac{bs(n-2)}{2}+du\Bigr)^{-1} \Bigl(as+cu\Bigr) \biggr)\delta_{[A}^E\delta_{C]B}\delta_E^{[P}\delta^{R]Q} > 0. \label{Btt0-pos-2a}
\end{gather}
The coefficient of the term $\delta_{[A}^E\delta_{C]B}\delta_E^{[P}\delta^{R]Q}$ in \eqref{Btt0-pos-2a} is rather lengthy and again to simplify this, we again make a priori choices using \eqref{gamma-mu} and set
\begin{equation} \label{gamma-mu-fix}
    \mu = 0 \AND \gamma = 2.
\end{equation}
With the values from \eqref{pqsua} and \eqref{gamma-mu-fix}, the first positive definiteness condition \eqref{Btt0-pos-1a} for $\Btt_0$ becomes equivalent to
\begin{equation} \label{Btt0-pos-1b}
    \frac{4d^2-4bd(n-2)+b^2(n^2-3n+2)}{4(n-1)} > 0.
\end{equation}
This inequality holds provided $2d\neq b(n-2)$, as can be seen from the lower bound
\begin{equation*}
    \frac{4d^2-4bd(n-2)+b^2(n^2-3n+2)}{4(n-1)} \geq \frac{\bigl(2d-b(n-2)\bigr)^2}{4(n-1)} \geq 0.
\end{equation*}
Under the same parameter choices, the coefficient of $\delta_{[A}^E\delta_{C]B}\delta_E^{[P}\delta^{R]Q}$ in \eqref{Btt0-pos-2a} simplifies to
\begin{equation*} \label{Btt0-pos-2a*}
    -\frac{b^2c^2}{(n-1)\Bigl(4d^2-4bd(n-2)+b^2(n-1)(n-2)\Bigr)}.
\end{equation*}
Then by Lemma~\ref{Mc-PD}, condition \eqref{Btt0-pos-2a} is satisfied provided that
\begin{equation} \label{Btt0-pos-2b}
    \frac{c^2}{n-1} > \Biggl| \frac{b^2c^2}{(n-1)\Bigl(4d^2-4bd(n-2)+b^2(n-1)(n-2)\Bigr)} \Biggr|.
\end{equation}
Finally, applying Lemma~\ref{Mc-PD} once more to \eqref{M*-def}, the invertibility of $M_*$ is guaranteed when
\begin{equation} \label{M*-invs}
    c > \frac{|bd^{-1}c|}{2}.
\end{equation}
In summary, with the assumptions $h=1$, $l=\frac{1}{n-1}$, $\mu=0$ and $\gamma=2$, the matrices $B^0$ and $B^D$ are symmetric as long as $p$, $q$, $s$, $u$ and $a$ are given by \eqref{pqsua}. Moreover, $B^0$ is positive definite as long as $b$, $c$ and $d$ satisfy the inequalities \eqref{Btt0-pos-1b}, \eqref{Btt0-pos-2b} and \eqref{M*-invs}.

An appropriate choice of values for the parameters is given by 
\begin{gather} \label{param-fix}
    a=n-1, \quad b=2, \quad c=n-1, \quad d=\frac{3}{2}, \notag \\
    p=\frac{1}{2}, \quad q=0, \quad s=\frac{2n-5}{2n-2}, \quad u=\frac{7-2n}{2n-2}, \notag \\
    h=1, \quad l=\frac{1}{n-1}, \quad \mu=0, \quad \gamma=2.
\end{gather}

\begin{rem}
    We mention here that it is permissible fixing the values of $h,l,\mu$ and $\gamma$ in advance, because the final choice of the symmetrization matrices actually relies on many factors. Indeed, our choice of the structures of the matrices $V$ and $\Sc$ is just one promising way to proceed the symmetrization and there can be many other choices. Furthermore, the equations for the parameters $\{a,b,c,d,p,q,s,u,h,l,\mu,\gamma\}$, resulting from \eqref{Btt1-def} and \eqref{Btt2-def}, form a system whose solution space has multiple branches and different branches can lead to different conditions on the parameters. Since we are just looking for one possible way of symmetrization, we are justified to fix some values in advance to simplify the process.
\end{rem}

Substituting \eqref{param-fix} into \eqref{V-def} and \eqref{Sc-def} and using \eqref{B0-def} to \eqref{Btt2-def} yields that
\begin{equation} \label{B0-fix}
    B^0 = \begin{bmatrix}
        \delta_A^P\delta_\Sigma^\Omega& 0& 0& 0& 0& 0& \\
        0& 1& 0& 0& 0& 0& \\
        0& 0& \frac{n-1}{2}\delta_A^P\delta_B^Q\delta_C^R& \delta_{[A}^P\delta_{C]B}& 0& 0& \\
        0& 0& \delta_A^{[P}\delta^{R]Q}& \frac{4n^2-24n+41}{4n-4}\delta_A^P& 0& 0& \\
        0& 0& 0& 0& 1& 0& \\
        0& 0& 0& 0& 0& \frac{1}{n-1}\delta_A^P\delta_B^Q&
    \end{bmatrix}
\end{equation}
and
\begin{equation} \label{BD-def}
    B^D = \Sc A^D V = \renewcommand{\arraystretch}{1.2}
    \begin{bmatrix}
        0& 0& 0& 0& 0& 0& \\
        0& 0& 0& 0& 0& 0& \\
        0& 0& 0& 0& -\delta_{[A}^D\delta_{C]B}& -\delta_{[A}^D\delta_{C]}^{\langle P}\delta_B^{Q\rangle}& \\
        0& 0& 0& 0& \frac{11-4n}{2n-2}\delta_A^D& \frac{9-2n}{2n-2}\delta^{D\langle P}\delta_A^{Q\rangle}& \\
        0& 0& -\delta^{D[P}\delta^{R]Q}& \frac{11-4n}{2n-2}\delta^{DP}& 0& 0& \\
        0& 0& -\delta^{D[P}\delta_{\langle A}^{R]}\delta_{B\rangle}^Q& \frac{9-2n}{2n-2}\delta_{\langle A}^D\delta_{B\rangle}^P& 0& 0&
    \end{bmatrix},
\end{equation}
From \eqref{Bsc-def}, the matrix $\Bsc$ satisfies
\begin{equation*} \label{knu-fix}
    \frac{1}{2}(\Bsc^{\tr}+\Bsc) = k\nu B^0 + \frac{1}{2}\bigl((\Sc\Ac V)^{\tr}+\Sc\Ac V\bigr).
\end{equation*}
Since $\nu>0$ and $B^0$ is positive definite by construction, there exists a $K\in\Zbb_{\geq0}$, such that for every $k\geq K$, $\frac{1}{2}(\Bsc^{\tr}+\Bsc)$ is positive definite. The parameter $k$ determines the highest order of spatial derivative of the fields that must be controlled. As we can see from the above discussion of the symmetrization process, the lower bound for $k$ depends on our way of symmetrizing the system \eqref{Fuch-4} and the choice for all the parameters that we set. Therefore, we do not derive an explicit lower bound for $k$ here, but simply indicating that there exists such a $k$ to ensure the positive definiteness of $\frac{1}{2}(\Bsc^{\tr}+\Bsc)$. It can also be verified that when the conformal Kasner exponents are zero in the matrix $\Ac$, i.e. when we are in the case of an FLRW metric, $k$ can be chosen to be 0. This means we do not need to control terms with higher order spatial derivatives and this is consistent with the results in \cite{BeyerOliynyk:2024b}.

\subsection{A bound on $B^0$}
For subsequent applications, we derive a rough estimate for the matrix $B^0$. Given any vector
\begin{equation*}
    w = (e_A^\Omega, \alpha, C_{ABC}, U_A, \Hc, \Sigma_{AB})^{\tr},
\end{equation*}
a direct calculation yields that
\begin{align*}
    w^{\tr}\Biggl(B^0-\frac{1}{2n^2}\id\Biggr)w &= \Biggl(1-\frac{1}{2n^2}\Biggr)|e|^2 + \Biggl(1-\frac{1}{2n^2}\Biggr)|\alpha|^2 + \Biggl(1-\frac{1}{2n^2}\Biggr)|\Hc|^2 + \Biggl(\frac{1}{n-1}-\frac{1}{2n^2}\Biggr)|\Sigma|^2 \\
        &\quad + \Biggl(\frac{n-1}{2}-\frac{1}{2n^2}\Biggr)|C|^2 + \Biggl(\frac{4n^2-24n+41}{4n-4}-\frac{1}{2n^2}\Biggr)|U|^2 + 2C^{ABC}U_{[A}\delta_{C]B}.
\end{align*}
We now examine the last three terms. Let
\begin{align*}
    I &:= \Biggl(\frac{n-1}{2}-\frac{1}{2n^2}\Biggr)|C|^2 + \Biggl(\frac{4n^2-24n+41}{4n-4}-\frac{1}{2n^2}\Biggr)|U|^2 + 2C^{ABC}U_{[A}\delta_{C]B} \\
        &\,\,= \Biggl(\frac{n-1}{2}-\frac{1}{2n^2}\Biggr)C_{ABC}C^{ABC} + \Biggl(\frac{4n^2-24n+41}{4n-4}-\frac{1}{2n^2}\Biggr)\frac{2}{n-2}\delta_{[A}^P\delta_{C]B}U_P\delta_Q^{[A}\delta^{C]B}U^Q \\
        &\,\,\quad + 2C^{ABC}\delta_{[A}^P\delta_{C]B}U_P,
\end{align*}
where we have used the identity \eqref{dd-1}. By the Cauchy-Schwarz inequality and the fact that
\begin{equation*}
    \Biggl(\frac{n-1}{2}-\frac{1}{2n^2}\Biggr) \Biggl(\frac{4n^2-24n+41}{4n-4}-\frac{1}{2n^2}\Biggr)\frac{2}{n-2} > 1 \quad \text{for} \quad n\geq4,
\end{equation*}
it follows that $I\geq0$. Consequently,
\begin{equation*}
    w^{\tr}\biggl(B^0-\frac{1}{2n^2}\id\biggr)w \geq 0,
\end{equation*}
and thus $B^0\geq\frac{1}{2n^2}\id$.

Since $\Btt_0$ is positive definite by \eqref{Btt0-pos-1a} and \eqref{Btt0-pos-2a}, we also have
\begin{equation*}
    \frac{n-1}{2}|C|^2 + \frac{4n^2-24n+41}{4n-4}|U|^2 + 2C^{ABC}U_{[A}\delta_{C]B} \geq 0.
\end{equation*}
Therefore,
\begin{align*}
    w^{\tr}(2n\id-B^0)w &\geq (2n-1)|e|^2 + (2n-1)|\alpha|^2 + (2n-1)|\Hc|^2 + \Biggl(2n-\frac{1}{n-1}\Biggr)|\Sigma|^2 \\
        &\quad + \frac{n-1}{2}|C|^2 + \frac{4n^2-24n+41}{4n-4}|U|^2 + 2C^{ABC}U_{[A}\delta_{C]B} \\
        &\geq 0,
\end{align*}
where the first inequality uses the elementary bounds
\begin{equation*}
    2n - \frac{n-1}{2} \geq \frac{n-1}{2} \AND 2n - \frac{4n^2-24n+41}{4n-4} \geq \frac{4n^2-24n+41}{4n-4}, \quad \text{for} \quad n\geq4.
\end{equation*}
To conclude, we obtain the following bounds for $B^0$
\begin{equation} \label{B0-bound}
    \frac{1}{2n^2}\id \leq B^0 \leq 2n\id.
\end{equation}

\subsection{The Fuchsian system}
Collecting \eqref{Fuch-2} and \eqref{Fuch-5}, the Fuchsian system of the frame formulation of the Einstein-scalar field equations is given by
\begin{equation} \label{Fuch-final*}
    \Bv^0\del{t}\Wv + \frac{1}{t^{\ep_2}}\Bv^\Lambda(\Wv) \del{\Lambda}\Wv = \frac{1}{t}\tilde{\Bcv}\Pv\Wv + \frac{1}{t^{\ep_2+\nu}}\Hv(\Wv) + \frac{1}{t}\Fv(\Wv),
\end{equation}
where
\begin{align}
    \Wv &= \Bigl( (W_\bc)_{|\bc|=0}, (W_\bc)_{|\bc|=1},..., (W_\bc)_{|\bc|=k-1}, (\Wt_\bc)_{|\bc|=k} \Bigr)^{\tr}, \label{Wv-def} \\
    \Bv^0 &= \diag(\id, \id,..., \id, B^0), \label{Bv0-def} \\
    \Bv^\Lambda(\Wv) &= \diag(0, 0,..., 0, e_D^\Lambda B^D), \label{BvLamb-def} \\
    \tilde{\Bcv} &= \diag(\Bc, \nu\id+\Bc\Pbb, 2\nu\id+\Bc\Pbb,..., (k-1)\nu\id+\Bc\Pbb, \Bsc), \label{Bcvt-def} \\
    \Pv &= \diag(\Pbb, \id, \id,..., \id), \label{Pv-def}
\end{align}
and $\Hv(\Wv)$ and $\Fv(\Wv)$ are quadratic in $\Wv$. Moreover, from the quadratic terms in \eqref{Fuch-2} and \eqref{Fuch-5}, $\Hv(\Wv)$ and $\Fv(\Wv)$ satisfy
\begin{equation*}
    \Hv(0) = 0,
\end{equation*}
and
\begin{equation} \label{Fv-decomp}
    \Fv(\Wv) = \Pv\Fv(\Wv) + \Pv^{\perp}\Fv(\Wv),
\end{equation}
where $\Pv\Fv(\Wv)$ can be further written as
\begin{equation} \label{PvFv}
    \Pv\Fv(\Wv) = \tilde{\Fv}(\Wv)\Pv\Wv,
\end{equation}
for some $\Fvt(\Wv)$ which is a matrix-valued map that depends linearly on $\Wv$ and satisfies
\begin{equation} \label{Fvt-Pv}
    \Fvt(0) = 0 \AND [\Fvt(\Wv), \Pv] = 0.
\end{equation}
This property of $\Fv(\Wv)$ is essential for us to get the global-in-time existence, see the proof of Proposition \ref{Fuch-global}.

\section{Fuchsian Stability}
In this section, we establish two distinct versions of nonlinear stability results of the trivial solution $\Wv=0$ to the system \eqref{Fuch-final*}, i.e. the global-in-space one and the local-in-space one. We begin by considering the global-in-space stability. For the purpose of being suitable for our subsequent analysis, we reformulate \eqref{Fuch-final*} as follows, supplemented by an initial condition:
\begin{align}
    \Bv^0\del{t}\Wv + \frac{1}{t^{\ep_2}}\Bv^\Lambda(\Wv) \del{\Lambda}\Wv &= \frac{1}{t}\Bcv(\Wv) \Pv\Wv + \frac{1}{t^{\ep_2+\nu}}\Hv(\Wv) + \frac{1}{t}\Pv^{\perp}\Fv(\Wv) \hspace{0.6cm} \text{in $M_{0,t_0}=(0,t_0]\times \Tbb^{n-1}$}, \label{Fuchsian-1} \\
    \Wv &= \Wv_0 \hspace{6.4cm} \text{in $\Sigma_{t_0}=\{t_0\}\times \Tbb^{n-1}$,} \label{Fuchsian-2}
\end{align}
where
\begin{equation} \label{Bcv-def}
    \Bcv(\Wv) = \tilde{\Bcv} + \Fvt(\Wv).
\end{equation}
It is clear that \eqref{Fuchsian-1} is just a direct rewriting of \eqref{Fuch-final*} using the properties given in \eqref{Fv-decomp} and \eqref{PvFv}. Under the assumption that the initial data $\Wv_0$ satisfy a suitable smallness condition, we state the existence and uniqueness of solutions to the above Fuchsian global initial value problem (GIVP). For use below, we also define
\begin{equation} \label{Pvperp-def}
    \Pv^{\perp} := \id - \Pv = \diag(\Pbb^{\perp}, 0, 0,..., 0).
\end{equation}

\begin{prop} \label{Fuch-global}
    Suppose $T_0>0$, $k_1\in\Zbb_{>\frac{n-1}{2}+1}$\footnote{Here, we do not require the regularity to be $k_1\in\Zbb_{>\frac{n-1}{2}+3}$ as in \cite[Thm.~3.8]{BOOS:2021}, since the power of $t$ in the spatial derivative matrix is $\ep_1<1$, see \eqref{ep-con-1} and \eqref{Fuchsian-1}, which allows us to avoid the necessity of \cite[Lemma 3.5]{BOOS:2021} and reduce the regularity.}, the conformal Kasner exponents $r_0$ and $r_A$, where $A=1,...,n-1$, satisfy the Kasner relations \eqref{Kasner-rels-B} and the subcritical condition \eqref{sub-cond-2}, $\ep_1,\ep_2\in\Rbb$ satisfy \eqref{ep-con-1}, $\nu\in\Rbb_{>0}$ satisfies \eqref{nu-con} and $k\in\Zbb_{\geq0}$ is chosen large enough such that for $\Bsc$ in \eqref{Bcvt-def}, $\frac{1}{2}(\Bsc^{\tr}+\Bsc)$ is positive definite, then there exists a $\delta_0>0$ such that for every $t_0\in(0,T_0]$ and every $\Wv_0\in H^{k_1}(\Tbb^{n-1})$ satisfying
    \begin{equation} \label{smallness-1}
        \norm{\Wv_0}_{H^{k_1}(\Tbb^{n-1})} < \delta_0,
    \end{equation}
    the GIVP \eqref{Fuchsian-1}-\eqref{Fuchsian-2} admits a unique solution
    \begin{equation*}
        \Wv \in C_b^0\bigl((0,t_0], H^{k_1}(\Tbb^{n-1})\bigr) \cap C^1\bigl((0,t_0], H^{k_1-1}(\Tbb^{n-1})\bigr)
    \end{equation*}
    such that $\lim_{t\searrow0}\Pv^{\perp}\Wv(t)$ exists in $H^{k_1-1}(\Tbb^{n-1})$ and is denoted by $\Pv^{\perp}\Wv(0)$. Moreover, there exists a $\zeta>0$ such that the solution $\Wv$ satisfies the energy estimate
    \begin{equation} \label{energy-1}
        \norm{\Wv(t)}_{H^{k_1}(\Tbb^{n-1})}^2 + \int_t^{t_0} \frac{1}{s}\norm{\Pv\Wv(s)}_{H^{k_1}(\Tbb^{n-1})}^2 \, ds \lesssim \norm{\Wv_0}_{H^{k_1}(\Tbb^{n-1})}^2
    \end{equation}
    and the decay estimate
    \begin{align}
        \norm{\Pv\Wv(t)}_{H^{k_1-1}(\Tbb^{n-1})} + \norm{\Pv^{\perp}\Wv(t)-\Pv^{\perp}\Wv(0)}_{H^{k_1-1}(\Tbb^{n-1})} \lesssim t^\zeta \label{decay-1}
    \end{align}
    for all $t\in(0,t_0]$. The implicit constants in the energy and decay estimates are independent of the the choice of $t_0\in(0,T_0]$ and $\Wv_0$ satisfying \eqref{smallness-1}.
\end{prop}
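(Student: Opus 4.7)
The plan is to cast \eqref{Fuchsian-1}--\eqref{Fuchsian-2} as an instance of the general Fuchsian symmetric hyperbolic GIVP treated in \cite[\S 3]{BOOS:2021} and to verify each structural hypothesis required there; the existence, uniqueness, energy estimate and decay estimate then follow by a direct application of \cite[Thm.~3.8]{BOOS:2021}. Most of the ingredients have, in effect, been built during the construction of Section~\ref{Fuch-form}, so the task is mainly one of bookkeeping.

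First I would verify the symmetric hyperbolic structure. The matrix $\Bv^0$ in \eqref{Bv0-def} is block-diagonal; its lower-order blocks are the identity and the top-order block is $B^0$, which is symmetric and satisfies the uniform two-sided bound \eqref{B0-bound}. Hence $\Bv^0$ is symmetric, bounded, and uniformly positive definite. Similarly, $\Bv^\Lambda(\Wv)$ in \eqref{BvLamb-def} vanishes on all lower-order blocks and equals $e_D^\Lambda B^D$ on the top block; the symmetry of $B^D$ was arranged precisely by the choice of symmetrizer carried out in Section~\ref{Symmetrization}, so $\Bv^\Lambda(\Wv)$ is symmetric and depends smoothly on $\Wv$. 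Next, I would check the Fuchsian cone condition on $\tilde{\Bcv}$ defined by \eqref{Bcvt-def}: the lower-order diagonal blocks $j\nu\,\id+\Bc\Pbb$ have strictly positive eigenvalues on $\operatorname{ran}(\Pv)$ by virtue of the subcritical condition \eqref{sub-cond-2} together with \eqref{r0-bound}, \eqref{rA-bound} and \eqref{ep-con-1}, as was observed after \eqref{Bc-def}; the top-order block $\Bsc$ has symmetric part positive definite by the choice of $k\geq K$ recorded at the end of Section~\ref{Symmetrization}. The commutation condition $[\Bcv(\Wv),\Pv]=0$ required by \cite{BOOS:2021} then follows from the block structure of $\tilde{\Bcv}$ combined with \eqref{Fvt-Pv} and definition \eqref{Bcv-def}.

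I would then organize the nonlinear right-hand side. The decomposition \eqref{Fv-decomp} and the factorization \eqref{PvFv} have been used to absorb the part $\Pv\Fv$ into $\tilde{\Bcv}\Pv\Wv$ through \eqref{Bcv-def}, so only $\frac{1}{t}\Pv^\perp\Fv(\Wv)$ and $\frac{1}{t^{\ep_2+\nu}}\Hv(\Wv)$ remain as genuine source terms. Both are quadratic and vanish at $\Wv=0$, and the singular weights $t^{-1}$ and $t^{-(\ep_2+\nu)}$ are integrable thanks to \eqref{nu-con}, which is exactly the scaling permitted by the hypotheses of \cite[\S 3.4]{BOOS:2021}. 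Moser-type product estimates in $H^{k_1}$ for $k_1>\frac{n-1}{2}+1$ control these contributions under the smallness assumption \eqref{smallness-1}; the reduction in the required regularity relative to \cite[Thm.~3.8]{BOOS:2021} is justified, as in the footnote, by the integrability of the factor $t^{-\ep_2}$ with $\ep_2<1$ multiplying the spatial derivatives, which removes the need for \cite[Lemma 3.5]{BOOS:2021}.

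With every hypothesis verified, \cite[Thm.~3.8]{BOOS:2021} directly supplies the threshold $\delta_0$ (uniform in $t_0\in(0,T_0]$), the unique solution in $C_b^0\bigl((0,t_0],H^{k_1}\bigr)\cap C^1\bigl((0,t_0],H^{k_1-1}\bigr)$, the existence of $\lim_{t\searrow 0}\Pv^\perp\Wv(t)$ in $H^{k_1-1}(\Tbb^{n-1})$, and the energy and decay estimates \eqref{energy-1}--\eqref{decay-1} with some $\zeta>0$. I expect the main technical obstacle to be the quantitative positive-definiteness of $\tfrac{1}{2}(\Bsc^{\tr}+\Bsc)$: no explicit lower bound on the admissible $k$ is extracted from the construction, so the argument must show that the symmetric part of $\Sc\Ac V$ is bounded below in operator norm by a finite constant depending only on $n$, the Kasner exponents, and the parameters fixed in \eqref{param-fix}. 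Once this is in hand, adding $k\nu B^0\geq \frac{k\nu}{2n^2}\id$ dominates any indefinite contribution for all $k$ large enough, and \eqref{B0-bound} simultaneously keeps the upper bound on $\Bsc$ under control, closing the verification.
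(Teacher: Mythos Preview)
Your approach is essentially the same as the paper's: verify the structural hypotheses of \cite[\S 3.4]{BOOS:2021} item by item and then invoke \cite[Thm.~3.8]{BOOS:2021}. One slip to correct: the weight $t^{-1}$ in front of $\Pv^\perp\Fv(\Wv)$ is \emph{not} integrable near $t=0$; what makes this term admissible in the Fuchsian framework is the structural property $\Pv^\perp\Fv(\Wv)=\Ord(|\Pv\Wv|^2)$ (quadratic in the \emph{decaying} variables, cf.\ \eqref{Fl-def} and \eqref{Fb-decomp}), together with $\Pv\Hv_2=0$, which the paper checks explicitly when verifying condition~(iii*).
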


\begin{proof}
    We first verify that the coefficients in \eqref{Fuchsian-1} satisfy the conditions in \cite[\S 3.4]{BOOS:2021} and then the conclusion of the Proposition will be a direct consequence of \cite[Thm.~3.8]{BOOS:2021}. We begin by fixing the constant $p$ to be
    \begin{equation} \label{cnst-p}
        p = \min\{1-\ep_2, 1-(\ep_2+\nu)\}.
    \end{equation}
    We notice that the matrix $\Pv$ defined in \eqref{Pv-def} satisfies
    \begin{equation*}
        \Pv^2 = \Pv, \quad \Pv^{\tr} = \Pv, \quad \del{t}\Pv = 0 \quad \text{and} \quad \del{\Lambda}\Pv = 0,
    \end{equation*}
    so \cite[\S 3.4 (i*)]{BOOS:2021} is satisfied.
    
    It is clear from \eqref{B0-bound} and \eqref{Bv0-def} that $\Bv^0$ is symmetric and positive definite. From the choice for the constant $k$ and the definition in \eqref{Bcvt-def}, $\tilde{\Bcv}$ is also positive definite. Based on the fact that $\Fvt(\Wv)$ depends smoothly on $\Wv$ with $\Fvt(0)=0$, we see from \eqref{Bcv-def} that $\Bcv(0)=\tilde{\Bcv}$ and consequently
    \begin{equation*}
        \Bcv(\Wv) = \tilde{\Bcv} + \Ord(\Wv).
    \end{equation*}
    Thus for all $\Wv\in \mathbb{B}_R(\Vv)$ such that $R>0$ is chosen small enough and $\Vv$ is the vector space that $\Wv$ lies in, there exist constants $\tilde{\kappa}, \tilde{\gamma}$ so that $\tilde{\kappa}\id \leq\Bcv(\Wv)\leq \tilde{\gamma}\id$. This implies that
    \begin{equation*}
        \frac{1}{2n^2}\id \leq \Bv^0 \leq \frac{1}{\kappa}\Bcv(\Wv) \leq \gamma\id,
    \end{equation*}
    for some constants $\kappa,\gamma>0$. We also note from \eqref{Pbb-def}, \eqref{Bc-def}, \eqref{Bv0-def}, \eqref{Bcvt-def}, \eqref{Pv-def}, \eqref{Fvt-Pv} and \eqref{Pvperp-def} that
    \begin{equation*}
        [\Pv, \Bcv(\Wv)] = [\Pv, \tilde{\Bcv}+\Fvt(\Wv)] = 0
    \end{equation*}
    and
    \begin{equation*}
        \Pv\Bv^0\Pv^{\perp} = \Pv^{\perp}\Bv^0\Pv = 0.
    \end{equation*}
    Together with the fact that $\Bv^0$ and $\tilde{\Bcv}$ are both constant matrices, \cite[\S 3.4 (ii*)]{BOOS:2021} is satisfied.
    
    The source term in \eqref{Fuchsian-1}, $t^{-(\ep_2+\nu)}\Hv(\Wv)+t^{-1}\Pv^{\perp}\Fv(\Wv)$, which corresponds to the $F$ in \cite[\S 3.4]{BOOS:2021}, can be expanded as follows
    \begin{equation*}
        \frac{1}{t^{\ep_2+\nu}}\Hv(\Wv) + \frac{1}{t}\Pv^{\perp}\Fv(\Wv) = t^{-(1-p)}\tilde{\Hv} + t^{-(1-p)}\Hv_0 + t^{-(1-\frac{p}{2})}\Hv_1 + t^{-1}\Hv_2,
    \end{equation*}
    where $\tilde{\Hv}=\Hv_1=0$, $\Hv_0=t^{1-(\ep_2+\nu)-p}\Hv(\Wv)$ and $\Hv_2=\Pv^{\perp}\Fv(\Wv)$. By \eqref{cnst-p} and the fact that $\Hv(0)=0$, we see that $\Hv_0(\Wv)=\Ord(\Wv)$. By \eqref{Pv-def} and \eqref{Pvperp-def}, $\Pv\Hv_2=\Pv\Pv^{\perp}\Fv(\Wv)=0$. Finally it can be seen from \eqref{Fl-def} that $\Pv^{\perp}\Fv(\Wv)=\Ord(|\Pv\Wv|^2)$. Hence, there exists a constant $\lambda_3$ and $\lambda_1=\lambda_2=0$ so that \cite[\S 3.4 (iii*)]{BOOS:2021} is satisfied.

    The coefficient matrix of the spatial derivative $t^{-\ep_2}\Bv^\Lambda$, which corresponds to the $B$ in \cite[\S 3.4]{BOOS:2021}, can be expanded as follows
    \begin{equation*}
        \frac{1}{t^{\ep_2}}\Bv^\Lambda = t^{-(1-p)}\Bv_0^\Lambda + t^{-(1-\frac{p}{2})}\Bv_1^\Lambda + t^{-1}\Bv_2^\Lambda,
    \end{equation*}
    where $\Bv_1^\Lambda=\Bv_2^\Lambda=0$ and $\Bv_0^\Lambda=t^{1-\ep_2-p}\Bv^\Lambda$. Then we can set the constant $\alpha$ to be 0 and \cite[\S 3.4 (iv*)]{BOOS:2021} is satisfied.

    We now calculate $\Div\!\Bv$ using the definition in \cite[\S 3.4 (v*)]{BOOS:2021}, \eqref{Bv0-def} and \eqref{BvLamb-def}, to be
    \begin{equation*}
        \Div\!\Bv = \del{t}\Bv^0 + \del{\Lambda}\biggl(\frac{1}{t^{\ep_2}}\Bv^\Lambda(\Wv)\biggr) = \frac{1}{t^{\ep_2}}\Cv(\Wv, \Uv)\Big|_{\Uv=\del{\Lambda}\Wv}.
    \end{equation*}
    It is clear that $\Cv(0,0)=0$, which indicates that there exists a constant $\theta>0$ such that
    \begin{equation*}
        \Bigl| \frac{1}{t^{-(1-\ep_2-p)}}\Cv(\Wv, \Uv) \Bigr| \leq \theta
    \end{equation*}
    for all $t\in[0, T_0]$ and $\Wv,\Uv \in \mathbb{B}_R(\Vv)$ provided that $R$ is chosen small enough. For the rest of the constants, we can simply set $\beta_j$, $j=0,1,...,7$ to be 0 and \cite[\S 3.4 (v*)]{BOOS:2021} is satisfied.

    For a final step, we may restrict the value of $\lambda_3$ so that $2n^2\lambda_3<\kappa$ and the condition in \cite[Eq. (3.58)]{BOOS:2021} is satisfied. This can be achieved by choosing $R$ small enough when verifying \cite[\S 3.4 (iii*)]{BOOS:2021}.

    We conclude that the coefficients in \eqref{Fuchsian-1} satisfy all the assumptions in \cite[\S 3.4]{BOOS:2021} and as a consequence, there exists a unique solution
    \begin{equation*}
        \Wv \in C_b^0\bigl((0,t_0], H^{k_1}(\Tbb^{n-1})\bigr) \cap C^1\bigl((0,t_0], H^{k_1-1}(\Tbb^{n-1})\bigr)
    \end{equation*}
    of the GIVP \eqref{Fuchsian-1} to \eqref{Fuchsian-2} such that the limit $\lim_{t\searrow0}\Pv^{\perp}\Wv(t)$, denoted by $\Pv^{\perp}\Wv(0)$, exists in $H^{k_1-1}(\Tbb^{n-1})$. And the energy estimate \eqref{energy-1} and the decay estimate \eqref{decay-1} are direct results of \cite[Thm.~3.8]{BOOS:2021}. It is also important for us to notice that our choice of the constant $\delta_0$ does not depend on the value of $t_0\in(0,T_0]$. Indeed, for any $t_0\in(0,T_0]$, we can see from the above discussion that we may choose the same constants and all the coefficients assumptions will still be satisfied. More precisely, our choices for the above constants determine the value of $\delta_0$, which guarantees the existence of the solution on the interval $(0,T_0]$, and clearly also yields solution on any smaller interval $(0,t_0]$. 
\end{proof}

Having established the past global-in-space stability, we now turn to the past local-in-space stability result. Specifically, we examine the stability of the trivial solution $\Wv=0$ to the following Fuchsian initial value problem on a truncated cone domain
\begin{align}
    \Bv^0\del{t}\Wv + \frac{1}{t^{\ep_2}}\Bv^\Lambda(\Wv) \del{\Lambda}\Wv &= \frac{1}{t}\Bcv(\Wv) \Pv\Wv + \frac{1}{t^{\ep_2+\nu}}\Hv(\Wv) + \frac{1}{t}\Pv^{\perp}\Fv(\Wv) \hspace{0.6cm} \text{in $\Omega_{\Icv}$}, \label{Fuchsian-3} \\
    \Wv &= \Wv_0 \hspace{6.4cm} \text{in $\{t_0\}\times \mathbb{B}_{\rho_0}$}, \label{Fuchsian-4}
\end{align}
where the index is given by $\Icv=(t_0,0,\rho_0,\rho_1,\ep_2)$, see Section~\ref{Domains} for the definition of a truncated cone domain.

\begin{prop} \label{Fuch-local}
    Suppose $T_0>0$, $k_1\in\Zbb_{>\frac{n-1}{2}+1}$\footnote{The reason for the reduction of the regularity is the same as in Proposition~\ref{Fuch-global}.}, the conformal Kasner exponents $r_0$ and $r_A$, where $A=1,...,n-1$, satisfy the Kasner relations \eqref{Kasner-rels-B} and the subcritical condition \eqref{sub-cond-2}, $\ep_1,\ep_2\in\Rbb$ satisfy \eqref{ep-con-1}, $\nu\in\Rbb_{>0}$ satisfies \eqref{nu-con}, $k\in\Zbb_{\geq0}$ is chosen large enough such that for $\Bsc$ in \eqref{Bcvt-def}, $\frac{1}{2}(\Bsc^{\tr}+\Bsc)$ is positive definite and $0<\rho_0<L$, then there exists a $\delta_0>0$ such that for every $t_0\in(0,T_0]$ and every $\Wv_0\in H^{k_1}(\mathbb{B}_{\rho_0})$ satisfying
    \begin{equation} \label{smallness-2}
        \norm{\Wv_0}_{H^{k_1}(\mathbb{B}_{\rho_0})} < \delta_0,
    \end{equation}
    the GIVP \eqref{Fuchsian-3}-\eqref{Fuchsian-4} admits a unique solution $\Wv$ such that
    \begin{equation*}
        \Wv(t) \in H^{k_1}(\mathbb{B}_{\rho(t)}) \AND \del{t}\Wv(t) \in H^{k_1-1}(\mathbb{B}_{\rho(t)}),
    \end{equation*}
    where $\rho(t)=\frac{\rho_1(t^{1-\ep_2}-t_0^{1-\ep_2})}{1-\ep_2}+\rho_0$, for all $t\in(0,t_0]$. Moreover, the limit $\lim_{t\searrow0}\Pv^{\perp}\Wv(t)$, denoted by $\Pv^{\perp}\Wv(0)$, exists in $H^{k_1-1}(\mathbb{B}_{\tilde{\rho}_0})$, with $\tilde{\rho}_0=\rho_0-\frac{\rho_1 t_0^{1-\ep_2}}{1-\ep_2}$, and there exists a $\zeta>0$ such that the solution $\Wv$ satisfies the energy estimate
    \begin{equation*}
        \norm{\Wv(t)}_{H^{k_1}(\mathbb{B}_{\rho(t)})}^2 + \int_t^{t_0} \frac{1}{s}\norm{\Pv\Wv(s)}_{H^{k_1}(\mathbb{B}_{\rho(s)})}^2 \, ds \lesssim \norm{\Wv_0}_{H^{k_1}(\mathbb{B}_{\rho_0})}^2
    \end{equation*}
    and the decay estimate
    \begin{equation*}
        \norm{\Pv\Wv(t)}_{H^{k_1-1}(\mathbb{B}_{\rhot_0})} + \norm{\Pv^{\perp}\Wv(t)-\Pv^{\perp}\Wv(0)}_{H^{k_1-1}(\mathbb{B}_{\rhot_0})} \lesssim t^{\zeta}
    \end{equation*}
    for all $t\in(0,t_0]$. And the very first component of $\Wv$, i.e. $e=(e_P^\Sigma)$, see \eqref{W-def}, \eqref{Wbc-def} and \eqref{Wv-def}, satisfies the following inequality
    \begin{equation*}
        \sup_{(t,x)\in \Omega_{\Icv}} |e(t,x)|\footnote{Here $e(t,x)$ is to highlight the dependence of the norm $|e|$ on $t$ and $x$.} \leq \frac{\rho_1}{6n^3}.
    \end{equation*}
    The implicit constants in the energy and decay estimates are independent of the the choice of $t_0\in(0,T_0]$ and $\Wv_0$ satisfying \eqref{smallness-2}.
\end{prop}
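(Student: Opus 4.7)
The plan is to reduce the local-in-space IVP on $\Omega_{\Icv}$ to the global-in-space IVP on $M_{0,t_0}$ handled by Proposition~\ref{Fuch-global}, and then to recover uniqueness on the cone, along with the explicit pointwise bound on $e$, via a domain-of-dependence argument based on the spacelike character of the lateral boundary $\Gamma_{\Icv}$. Concretely, I would proceed in three steps: extend the initial data, run the global Fuchsian solver, and verify that $\Gamma_{\Icv}$ is weakly spacelike.

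For the first two steps, I apply the extension operator $E_{\rho_0}$ from Section~\ref{extension} to produce $\Wv_0^{\mathrm{ext}} := E_{\rho_0}(\Wv_0) \in H^{k_1}(\Tbb^{n-1})$ with $\|\Wv_0^{\mathrm{ext}}\|_{H^{k_1}(\Tbb^{n-1})} \leq C_{\mathrm{ext}}\|\Wv_0\|_{H^{k_1}(\mathbb{B}_{\rho_0})}$. Taking $\delta_0$ smaller than the global threshold of Proposition~\ref{Fuch-global} divided by $C_{\mathrm{ext}}$, the extended data yield a unique global solution $\Wv^{\mathrm{ext}} \in C_b^0((0,t_0], H^{k_1}(\Tbb^{n-1})) \cap C^1((0,t_0], H^{k_1-1}(\Tbb^{n-1}))$ satisfying the stated energy and decay estimates and with $\lim_{t\searrow 0}\Pv^\perp\Wv^{\mathrm{ext}}(t)$ existing in $H^{k_1-1}(\Tbb^{n-1})$. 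Since $k_1 > (n-1)/2 + 1$, Sobolev embedding combined with the uniform $H^{k_1}$-bound gives $\sup_{(t,x)\in M_{0,t_0}}|\Wv^{\mathrm{ext}}(t,x)| \lesssim \|\Wv_0\|_{H^{k_1}(\mathbb{B}_{\rho_0})}$; by further shrinking $\delta_0$, the $e$-component is forced to satisfy $\sup_{\Omega_{\Icv}}|e| \leq \rho_1/(6n^3)$, establishing the claimed pointwise bound.

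The core step is showing that $\Gamma_{\Icv}$ is weakly spacelike for the symmetric hyperbolic part of \eqref{Fuchsian-3}, so that the solution on $\Omega_{\Icv}$ is determined by the initial data on $\mathbb{B}_{\rho_0}$ alone. With the outward conormal $n_\mu$ from \eqref{normal}, the relevant boundary matrix on the top block is
\[
    \mathcal{N} \;=\; n_0 B^0 + n_\Lambda t^{-\ep_2} e_D^\Lambda B^D \;=\; \frac{1}{t^{\ep_2}}\Bigl(-\rho_1\, B^0 \;+\; \tfrac{x^\Lambda}{|x|}\, e_D^\Lambda B^D\Bigr),
\]
while the lower-order ODE blocks have vanishing spatial coefficient and impose no boundary condition. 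The bound $B^0 \geq \frac{1}{2n^2}\id$ from \eqref{B0-bound}, together with the explicit entries of $B^D$ in \eqref{BD-def}, gives $\|\tfrac{x^\Lambda}{|x|} e_D^\Lambda B^D\| \leq C_n|e|$ for a dimensional constant $C_n$; the calibration $|e| \leq \rho_1/(6n^3)$ is chosen precisely so that $\rho_1 B^0 - \tfrac{x^\Lambda}{|x|}e_D^\Lambda B^D$ is positive definite uniformly in $(t,x) \in \Omega_{\Icv}$, making $\mathcal{N}$ negative definite. The standard domain-of-dependence principle for symmetric hyperbolic systems then yields that $\Wv := \Wv^{\mathrm{ext}}|_{\Omega_{\Icv}}$ is the unique solution on the cone with the prescribed data, and the claimed energy and decay estimates, as well as the existence of $\Pv^\perp\Wv(0)$ in $H^{k_1-1}(\mathbb{B}_{\tilde\rho_0})$, follow by restriction of the global ones to the nested balls $\mathbb{B}_{\rho(t)}$ and $\mathbb{B}_{\tilde\rho_0}$.

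The main obstacle is the constant bookkeeping in the spacelike verification: one must check that the sup-bound on $|e|$ derived from Sobolev embedding is strictly below $\rho_1/(6n^3)$, and that this threshold does force $\rho_1 B^0$ to dominate $\tfrac{x^\Lambda}{|x|}e_D^\Lambda B^D$ for every admissible $n \geq 4$ using the block structure of \eqref{BD-def} and the bounds from Section~\ref{Symmetrization}. Once this calibration is pinned down, the rest of the argument is a routine application of the extension theorem and finite propagation speed.
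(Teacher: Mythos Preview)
Your proposal is correct and follows essentially the same approach as the paper: extend the data via $E_{\rho_0}$, invoke Proposition~\ref{Fuch-global} on $\Tbb^{n-1}$, restrict to $\Omega_{\Icv}$, and verify that $\Gamma_{\Icv}$ is weakly spacelike by showing the boundary matrix $n_0\Bv^0+t^{-\ep_2}n_\Lambda\Bv^\Lambda$ is nonpositive, using the lower bound \eqref{B0-bound} on $B^0$ together with the pointwise control $|e|\leq\rho_1/(6n^3)$ obtained from Sobolev embedding and the global energy estimate. The paper carries out precisely the ``constant bookkeeping'' you flag as the main obstacle, estimating $v^{\tr}B^D v$ term by term via Cauchy--Schwarz to pin down the threshold $\rho_1/(6n^3)$.
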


\begin{proof}
Observe first that the coefficients of \eqref{Fuchsian-3} are identical to those in \eqref{Fuchsian-1}, so then the existence of a solution can be established by invoking Proposition~\ref{Fuch-global}. For any given initial data $\Wv_0\in H^{k_1}(\mathbb{B}_{\rho_0})$, due to the argument in Section~\ref{extension}, there exists an extension operator $E_{\rho_0}$ such that
\begin{equation*}
    \tilde{\Wv}_0 := E_{\rho_0}(\Wv_0) \in H^{k_1}(\Tbb^{n-1})
\end{equation*}
and
\begin{equation} \label{Wv0}
    \tilde{\Wv}_0 = \Wv_0 \quad \text{a.e. in $\mathbb{B}_{\rho_0}$} \AND \norm{\tilde{\Wv}_0}_{H^{k_1}(\Tbb^{n-1})} \leq C_0 \norm{\Wv_0}_{H^{k_1}(\mathbb{B}_{\rho_0})}
\end{equation}
for some constant $C_0>0$. We take $\tilde{\Wv}_0$ to be the initial data of the system \eqref{Fuchsian-1} to \eqref{Fuchsian-2}. According to proposition~\ref{Fuch-global}, there exists a $\tilde{\delta}_0>0$ such that for any $t_0\in(0,T_0]$, as long as
\begin{equation*}
    \norm{\Wv_0}_{H^{k_1}(\mathbb{B}_{\rho_0})} < \frac{\tilde{\delta}_0}{C_0},
\end{equation*}
then $\norm{\tilde{\Wv}_0}_{H^{k_1}(\Tbb^{n-1})}<\tilde{\delta}_0$ holds, and there exists a unique solution
\begin{equation} \label{Wvt-regu}
    \tilde{\Wv} \in C_b^0\bigl((0,t_0], H^{k_1}(\Tbb^{n-1})\bigr) \cap C^1\bigl((0,t_0], H^{k_1-1}(\Tbb^{n-1})\bigr).
\end{equation}
Moreover, the limit $\lim_{t\searrow0}\Pv^{\perp}\tilde{\Wv}(t)$, denoted by $\Pv^{\perp}\tilde{\Wv}(0)$, exists in $H^{k_1-1}(\Tbb^{n-1})$, and there exists a $\zeta>0$ such that the solution $\tilde{\Wv}$ satisfies the energy estimate
\begin{equation} \label{Wvt-energy}
    \norm{\tilde{\Wv}(t)}_{H^{k_1}(\Tbb^{n-1})}^2 + \int_t^{t_0} \frac{1}{s}\norm{\Pv\tilde{\Wv}(s)}_{H^{k_1}(\Tbb^{n-1})}^2 \lesssim \norm{\tilde{\Wv}_0}_{H^{k_1}(\Tbb^{n-1})}^2
\end{equation}
and the decay estimate
\begin{equation} \label{Wvt-decay}
    \norm{\Pv\tilde{\Wv}(t)}_{H^{k_1-1}(\Tbb^{n-1})} + \norm{\Pv^{\perp}\tilde{\Wv}(t)-\Pv^{\perp}\tilde{\Wv}(0)}_{H^{k_1-1}(\Tbb^{n-1})} \lesssim t^{\zeta}
\end{equation}
for all $t\in(0,t_0]$.

Consequently, we have established a solution on $M_{0,t_0}$ and the restriction
\begin{equation} \label{Wv-local}
    \Wv := \tilde{\Wv}|_{\Omega_{\Icv}}
\end{equation}
yields a solution to the GIVP \eqref{Fuchsian-3} to \eqref{Fuchsian-4}. From \eqref{Wvt-regu} and \eqref{Wv-local}, it is clear that
\begin{equation*}
    \Wv(t) \in H^{k_1}(\mathbb{B}_{\rho(t)}) \AND \del{t}\Wv(t) \in H^{k_1-1}(\mathbb{B}_{\rho(t)}).
\end{equation*}
By the existence of $\Pv^{\perp}\tilde{\Wv}(0)$ in $H^{k_1-1}(\Tbb^{n-1})$, the limit $\lim_{t\searrow0}\Pv^{\perp}\Wv(t)$ also exists in $H^{k_1-1}(\mathbb{B}_{\tilde{\rho}_0})$ and is denoted by $\Pv^{\perp}\Wv(0)$. Using the fact that $\Pv$ is a constant matrix, we derive the following two inequalities
\begin{align*}
    \norm{\Wv(t)}_{H^{k_1}(\mathbb{B}_{\rho(t)})}^2 + \int_t^{t_0} \frac{1}{s}\norm{\Pv\Wv(s)}_{H^{k_1}(\mathbb{B}_{\rho(s)})}^2 \, ds &\leq \norm{\tilde{\Wv}(t)}_{H^{k_1}(\Tbb^{n-1})}^2 + \int_t^{t_0} \frac{1}{s}\norm{\Pv\tilde{\Wv}(s)}_{H^{k_1}(\Tbb^{n-1})}^2, \\
    \norm{\Pv\Wv(t)}_{H^{k_1-1}(\mathbb{B}_{\rhot_0})} + \norm{\Pv^{\perp}\Wv(t)&-\Pv^{\perp}\Wv(0)}_{H^{k_1-1}(\mathbb{B}_{\rhot_0})} \\
    \leq \norm{\Pv\tilde{\Wv}(t)}_{H^{k_1-1}(\Tbb^{n-1})} &+ \norm{\Pv^{\perp}\tilde{\Wv}(t)-\Pv^{\perp}\tilde{\Wv}(0)}_{H^{k_1-1}(\Tbb^{n-1})}.
\end{align*}
Then from \eqref{Wv0}, \eqref{Wvt-energy} and \eqref{Wvt-decay}, the function $\Wv(t)$ satisfies the corresponding energy estimate
\begin{equation} \label{Wv-energy}
    \norm{\Wv(t)}_{H^{k_1}(\mathbb{B}_{\rho(t)})}^2 + \int_t^{t_0} \frac{1}{s}\norm{\Pv\Wv(s)}_{H^{k_1}(\mathbb{B}_{\rho(s)})}^2 \, ds \lesssim \norm{\Wv_0}_{H^{k_1}(\mathbb{B}_{\rho_0})}^2
\end{equation}
and the decay estimate
\begin{equation} \label{Wv-decay}
    \norm{\Pv\Wv(t)}_{H^{k_1-1}(\mathbb{B}_{\rhot_0})} + \norm{\Pv^{\perp}\Wv(t)-\Pv^{\perp}\Wv(0)}_{H^{k_1-1}(\mathbb{B}_{\rhot_0})} \lesssim t^{\zeta}
\end{equation}
for all $t\in(0,t_0]$.

Finally, we establish the uniqueness of the solution. Let $n_\mu$ denote a normal vector to the side piece of the domain $\Omega_{\Icv}$ and define
\begin{equation} \label{Bvbar-def}
    \bar{\Bv} := n_0\Bv^0 + \frac{1}{t^{\ep_2}}n_\Lambda \Bv^{\Lambda}(\Wv).
\end{equation}
It follows from \eqref{normal} that $n_\mu$ is given by
\begin{equation} \label{normal-fix}
    n_\mu = -\frac{\rho_1}{t^{\ep_2}}\delta_\mu^0 + \frac{1}{|x|}x^\Lambda \delta_{\mu\Lambda}.
\end{equation}
Then employing \eqref{Bv0-def}, \eqref{BvLamb-def}, \eqref{Bvbar-def} and \eqref{normal-fix}, we obtain that
\begin{equation} \label{Bvbar-fix}
    \bar{\Bv} = \diag\Biggl(  -\frac{\rho_1}{t^{\ep_2}}\id, -\frac{\rho_1}{t^{\ep_2}}\id,..., -\frac{\rho_1}{t^{\ep_2}}\id, -\frac{\rho_1}{t^{\ep_2}}B^0+\frac{1}{t^{\ep_2}}\frac{x_\Lambda}{|x|}e_D^\Lambda B^D \Biggr).
\end{equation}
We focus first on the last component of $\bar{\Bv}$. From \eqref{B0-fix} and \eqref{BD-def}, for any vector of the form\footnote{Here, we do not assume any symmetry or trace-free property of $C_{ABC}$ and $\Sigma_{AB}$ in $v$.}
\begin{equation*}
    v = (e_A^\Omega, \alpha, C_{ABC}, U_A, \Hc, \Sigma_{AB})^{\tr},
\end{equation*}
a simple calculation yields
\begin{align*}
    v^{\tr}B^0 v &= |e|^2 + |\alpha|^2 + \frac{n-1}{2}|C|^2 + \frac{4n^2-24n+41}{4n-4}|U|^2 + |\Hc|^2 + \frac{1}{n-1}|\Sigma|^2 + 2C^{ABC}U_{[A}\delta_{C]B}, \\
    v^{\tr}B^D v &= -2C^{ABC}\delta_{[A}^D\delta_{C]B}\Hc - 2C^{ABC}\delta_{[A}^D\delta_{C]}^{\langle P}\delta_B^{Q\rangle}\Sigma_{PQ} + \frac{11-4n}{n-1}U^A\delta_A^D\Hc + \frac{9-2n}{n-1}U^A\delta^{D\langle P}\delta_A^{Q\rangle}\Sigma_{PQ}.
\end{align*}
Due to the bound of $B^0$ given in \eqref{B0-bound}, we derive the estimate
\begin{equation} \label{uniq-est-1}
    -\frac{\rho_1}{t^{\ep_2}} v^{\tr}B^0 v < -\frac{\rho_1}{t^{\ep_2}} \Biggl( |e|^2 + |\alpha|^2 + \frac{1}{2n^2}|C|^2 + \frac{1}{2n^2}|U|^2 + |\Hc|^2 + \frac{1}{n-1}|\Sigma|^2 \Biggr).
\end{equation}
Proceeding, we set
\begin{equation} \label{fD-def}
    f_D = \frac{x_\Lambda}{|x|} e_D^\Lambda,
\end{equation}
and substitute \eqref{fD-def} into the expression for $v^{\tr}B^D v$, which leads to
\begin{equation*}
    \frac{x_\Lambda}{|x|}e_D^\Lambda v^{\tr}B^D v = -2C^{ABC}f_{[A}\delta_{C]B}\Hc - 2C^{ABC}f_{[A}\delta_{C]}^{\langle P}\delta_B^{Q\rangle}\Sigma_{PQ} + \frac{11-4n}{n-1}U^A f_{A}\Hc + \frac{9-2n}{n-1}U^A f^{\langle P}\delta_A^{Q\rangle}\Sigma_{PQ}.
\end{equation*}
Each term is now controlled using the Cauchy-Schwarz inequality:
\begin{align}
    -2C^{ABC}f_{[A}\delta_{C]B}\Hc &= -\Hc C^{ABC}f_A\delta_{CB} + \Hc C^{ABC}f_C\delta_{AB} \leq 2\sqrt{(n-1)}|\Hc||C||f|, \label{uniq-est-2} \\
    -2C^{ABC}f_{[A}\delta_{C]}^{\langle P}\delta_B^{Q\rangle}\Sigma_{PQ} &= -\frac{1}{2}C^{ABC}f_A\Sigma_{CB} - \frac{1}{2}C^{ABC}f_A\Sigma_{BC} + \frac{1}{n-1}C^{ABC}f_A\delta_{BC}\Sigma_P{}^P \notag \\
        &\quad + \frac{1}{2}C^{ABC}f_C\Sigma_{AB} + \frac{1}{2}C^{ABC}f_C\Sigma_{BA} - \frac{1}{n-1}C^{ABC}f_C\delta_{AB}\Sigma_P{}^P \notag \\
        &\leq 4|C||f||\Sigma|, \label{uniq-est-3} \\
    \frac{11-4n}{n-1}U^A f_{A}\Hc &\leq \frac{4n-11}{n-1}|\Hc||U||f|, \label{uniq-est-4} \\
    \frac{9-2n}{n-1}U^A f^{\langle P}\delta_A^{Q\rangle}\Sigma_{PQ} &= \frac{9-2n}{n-1}\Biggl( \frac{1}{2}f^PU^Q\Sigma_{PQ} + \frac{1}{2}f^QU^P\Sigma_{PQ} - \frac{1}{n-1}f_AU^A\Sigma_P{}^P \Biggr) \notag \\
        &\leq \Biggl| \frac{9-2n}{n-1}\Biggl(\frac{1}{\sqrt{n-1}}+1\Biggr) \Biggr| |f||U||\Sigma|, \label{uniq-est-5}
\end{align}
where we have used the following identity and estimate
\begin{align*}
    \bigl(f_A\delta_{CB}f^A\delta^{CB}\bigr)^{\frac{1}{2}} = \sqrt{n-1}\,|f| \AND |\Sigma_P{}^P| = |\delta^{PQ}\Sigma_{PQ}| \leq \sqrt{n-1}\,|\Sigma|.
\end{align*}
An estimate for $|f|$ is also required. Applying inequality \eqref{CS-ineq}, we see that
\begin{align*}
    |f|^2 &= \sum_{D=1}^{n-1}\Biggl(\frac{x_\Lambda}{|x|} e_D^\Lambda\Biggr)^2 \leq \sum_{D=1}^{n-1} (n-1)\frac{|x_\Lambda|^2}{|x|^2}|e_D^\Lambda|^2 \leq \sum_{D=1}^{n-1}\sum_{\Lambda=1}^{n-1} (n-1)|e_D^\Lambda|^2,
\end{align*}
and thus,
\begin{equation} \label{uniq-est-6}
    |f| \leq \sqrt{n-1}\,|e|.
\end{equation}
Gathering the estimates \eqref{uniq-est-2} to \eqref{uniq-est-5} with \eqref{uniq-est-6} and applying Young's inequality, $ab\leq a^2/2+b^2/2$, we arrive at the following estimate:
\begin{align}
     \frac{x_\Lambda}{|x|}e_D^\Lambda v^{\tr}B^D v &\leq \Biggl((n-1)+2\sqrt{n-1}\Biggr)|e||C|^2 + \Biggl(\frac{4n-11}{2\sqrt{n-1}}+\frac{|2n-9|(1+\sqrt{n-1})}{2(n-1)}\Biggr)|e||U|^2 \notag \\
        &\quad + \Biggl((n-1)+\frac{4n-11}{2\sqrt{n-1}}\Biggr)|e||\Hc|^2 + \Biggl(2\sqrt{n-1}+\frac{|2n-9|(1+\sqrt{n-1})}{2(n-1)}\Biggr)|e||\Sigma|^2. \label{uniq-est-7}
\end{align}
Comparing the coefficients of $|C|^2,|U|^2,|\Hc|^2$ and $|\Sigma|^2$ in \eqref{uniq-est-1} and \eqref{uniq-est-7}, we find that, provided
\begin{equation*}
    \sup_{(t,x)\in \Gamma_{\Icv}}|e(t,x)| \leq \frac{\rho_1}{6n^3},
\end{equation*}
it follows that
\begin{equation*}
    -\frac{\rho_1}{t^{\ep_2}} v^{\tr}B^0 v + \frac{1}{t^{\ep_2}}\frac{x_\Lambda}{|x|}e_D^\Lambda v^{\tr}B^D v < 0.
\end{equation*}
Since $\rho_1>0$, it is clear from \eqref{Bvbar-fix} that
\begin{equation*}
    \bar{\Bv}|_{\Gamma_{\Icv}} \leq 0.
\end{equation*}
This indicates that the hypersurface $\Gamma_{\Icv}$ is weakly spacelike with respect to the system \eqref{Fuchsian-3} to \eqref{Fuchsian-4}, in accordance with the definition provided preceding \cite[Thm.~4.5]{Lax:2006}. Therefore, by \cite[Thm.~4.5]{Lax:2006}, the solution constructed in \eqref{Wv-local} is the unique solution satisfying the initial condition $\Wv|_{\{t_0\}\times\mathbb{B}_{\rho_0}}=\Wv_0$.

So far, we have not yet specified the choice of $\delta_0$. Noticing that $e=(e_P^\Sigma)$ is a component of the vector $\Wv$, we apply the Sobolev inequality \eqref{Sobolev} and the energy estimate \eqref{Wv-energy} to obtain
\begin{equation*}
    \sup_{(t,x)\in\Omega_{\Icv}}|e(t,x)| \lesssim \sup_{0<t\leq t_0} \norm{\Wv(t)}_{L^\infty(\mathbb{B}_{\rho(t)})} \lesssim \sup_{0<t\leq t_0} \norm{\Wv(t)}_{H^{k_1}(\mathbb{B}_{\rho(t)})} \lesssim \norm{\Wv_0}_{H^{k_1}(\mathbb{B}_{\rho_0})}.
\end{equation*}
Consequently, there exists a constant $C_1>0$ such that $\sup_{(t,x)\in\Omega_{\Icv}}|e(t,x)| \leq C_1\norm{\Wv_0}_{H^{k_1}(\mathbb{B}_{\rho_0})}$. We may therefore choose
\begin{equation*}
    \delta_0 = \min\Biggl\{ \frac{\tilde{\delta}_0}{C_0}, \frac{\rho_1}{6C_1 n^3} \Biggr\},
\end{equation*}
and this completes the proof.

\end{proof}

\section{Local Existence in Lagrangian Coordinates}
As noted at the beginning of Section~\ref{Fuch-form}, we need to make sure that the global-in-time solutions in Proposition~\ref{Fuch-global} and ~\ref{Fuch-local} are indeed the solutions to the Einstein-scalar field equations. A crucial step in this verification is to ensure the propagation of the constraint equations \eqref{A-cnstr-2} to \eqref{H-cnstr-2}. While the authors of \cite{BOZ:2025} derive a strongly hyperbolic propagation system for these constraints directly in Section 4.2, we adopt an alternative approach in this section. Specifically, we establish the local-in-time existence of solutions to the Einstein-scalar field equations on both $M_{0,t_0}$ and on $\Omega_{\Icv}$ using the framework developed in \cite[\S 5]{BeyerOliynyk:2024b}, which is formulated relative to \textit{Lagrangian coordinates}. The uniqueness property of the solutions then allows us to achieve our goal.

The precise construction of the \textit{Lagrangian coordinates} is discussed in \cite[\S 5.3, \S 5.4]{BeyerOliynyk:2024b}.\footnote{The complete setup is provided in these two sections, and we will not reiterate it subsequently.} In \cite[\S 5]{BeyerOliynyk:2024b}, the authors use symbols with a "hat" (e.g. $\hat{\cdot}$\,) to denote the scalars and tensors in the conformal picture relative to the standard coordinates; symbols without any document to denote the geometric pull-back by the Lagrangian map $l$; and symbols with an "underline" (e.g. $\underline{\cdot}$\,) to denote the pull-back of scalars by $l$. For instance, the $g$ throughout this section corresponds to the conformal metric $\gt$ in the present article. We adhere to these conventions in this section and will provide clarification should any potential for confusion arise.

\subsection{Gauges} \label{gauges-veri}
Before presenting the Einstein-scalar field equations in Lagrangian coordinates, we must clarify that the gauges we use are the same as that in \cite[\S 5]{BeyerOliynyk:2024b}. This alignment is essential because, although different gauge choices do not imply physical difference, the specific variables used in this article, such as $\Aft_{AB}^\Omega$ and $\Bft_{AB}$, are not gauge-invariant and in the system, there are constraints beyond the standard Hamiltonian and momentum constraints.

In particular, the Lagrangian coordinates in \cite{BeyerOliynyk:2024b} is constructed via the vector field
\begin{equation*}
    \hat{\chi}^\mu = \frac{1}{|\vh|_{\gh}^2}\vh^\mu,
\end{equation*}
where $\vh^\mu=\hat{\partial}_{\mu}\hat{\tau}$ and we assume that $\chih^\mu$ is timelike with respect to $\gh$. According to \cite[Eq.~(5.109),(5.110)]{BeyerOliynyk:2024b}, the geometric pull-back of $\hat{\chi}^\mu$ is given by
\begin{equation} \label{chimu-def}
    \chi^\mu = \frac{\nabla^\mu \tau}{|\nabla\tau|_g^2} = \delta_0^\mu.
\end{equation}
As shown \cite[Eq.~(5.112)]{BeyerOliynyk:2024b}, this implies
\begin{equation*}
    \tau = t - t_0 + \mathring{\tau}.
\end{equation*}
Furthermore, \cite[Prop.~5.8]{BeyerOliynyk:2024b} establishes that after a short evolution, $\tau=t$, which is consistent with our choice of the time function specified in \eqref{time-fix}. This consistency further ensures that the evolution equation for the lapse, derived from the identity $\Box_{\gt}t=0$, remains unchanged. To see that the shift vanishes in the Lagrangian coordinates, we observe that \eqref{chimu-def} implies that the vector $\del{t}$ is parallel to the vector $\nabla^\mu \tau$, which is orthogonal to the $\tau=constant$ hypersurface. Since $\tau=t$, it follows that $\del{t}$ is orthogonal to the $t=constant$ hypersurface, so then the shift is zero. In summary, we have verified that the choices of gauges in \cite[\S 5]{BeyerOliynyk:2024b} are the same as those in the present article.

\subsection{Initial value problem in Lagrangian coordinates} \label{Lag-IVP}
The Einstein-scalar field equations in Lagrangian coordinates are expressed as follows, see also \cite[Eq.~(5.48)]{BeyerOliynyk:2024b} to \cite[Eq.~(5.55)]{BeyerOliynyk:2024b},
\begin{align}
    \Bhu^{\lambda\beta\alpha} \Jcch_\alpha^\gamma \del{\gamma}\hhu_{\beta\mu\nu} &= \Jc_0^\lambda \Bigr(\Qhu_{\mu\nu}+\frac{2}{\tau}\bigl( \whu_{(\mu \nu)} - \Gammahu^\gamma_{\mu\nu} \vhu_\gamma\bigl) \Bigr), \label{Lag-1} \\
    \Bhu^{\lambda\beta\alpha} \Jcch_\alpha^\gamma \del{\gamma}\whu_{\beta\mu} &= - \Jc^\lambda_0\ghu^{\alpha\sigma}\ghu^{\beta\delta}\hhu_{\mu\sigma\delta}\whu_{\alpha\beta}, \label{Lag-2} \\
    \Bhu^{\lambda\beta\alpha} \Jcch_\alpha^\gamma \del{\gamma}\zhu_{\beta} &=0, \label{Lag-3} \\
    \del{0}\ghu_{\mu\nu} &= \Jc_0^\alpha \hhu_{\alpha\mu\nu}, \label{Lag-4} \\
    \del{0}\vhu_{\mu} &= \Jc_0^\alpha \whu_{\alpha\mu}, \label{Lag-5} \\
    \del{0}\tau &= \Jc^\alpha_0\zhu_{\alpha}, \label{Lag-6} \\
    \del{0}\Jc^\mu_\nu &=  \Jc^\lambda_\nu \Jsc^\mu_\lambda, \label{Lag-7} \\
    \del{0}l^\mu &= \chihu^\mu, \label{Lag-8}
\end{align}
where we view $\{\hhu_{\beta\mu\nu}, \whu_{\beta\mu}, \zhu_{\beta}, \ghu_{\mu\nu}, \vhu_{\mu}, \tau, \Jc^\mu_\nu, l^\mu\}$ as the first order variables and
\begin{align*}
    \Bh^{\lambda\beta\alpha} &= -\chih^\lambda \gh^{\beta\alpha} - \chih^\beta \gh^{\lambda\alpha} + \gh^{\lambda\beta} \chih^\alpha, \\
    (\Jcch_\nu^\mu) &:= (\Jc_\nu^\mu)^{-1},
\end{align*}
with $\Jc_\nu^\mu=\del{\nu}l^\mu$. The initial value problem for the system \eqref{Lag-1} to \eqref{Lag-8} is given by
\begin{align}
    P^0 \del{t}Z + P^\Gamma \del{\Gamma}Z &= Y \hspace{0.6cm} \text{in $M_{t_1,t_0}=(t_1,t_0]\times \Tbb^{n-1}$}, \label{Lag-IVP-1} \\
    Z &= Z_0 \hspace{0.5cm} \text{in $\Sigma_{t_0}=\{t_0\}\times \Tbb^{n-1}$}, \label{Lag-IVP-2}
\end{align}
where
\begin{equation} \label{Z-def}
    Z = (\hhu_{\beta\mu\nu}, \whu_{\beta\mu}, \zhu_{\beta}, \ghu_{\mu\nu}, \vhu_{\mu}, \tau, \Jc^\mu_\nu, l^\mu)^{\tr},
\end{equation}
\begin{equation} \label{P0-def}
    P^0 = \diag(\Bhu^{\lambda\beta\alpha} \Jcch_\alpha^0, \Bhu^{\lambda\beta\alpha} \Jcch_\alpha^0, \Bhu^{\lambda\beta\alpha} \Jcch_\alpha^0, \delta_\alpha^\mu \delta_\beta^\nu, \delta_\alpha^\mu, 1, \delta_\alpha^\nu \delta_\mu^\beta, \delta_\alpha^\mu),
\end{equation}
\begin{equation} \label{PGamma-def}
    P^\Gamma = \diag(\Bhu^{\lambda\beta\alpha} \Jcch_\alpha^\Gamma, \Bhu^{\lambda\beta\alpha} \Jcch_\alpha^\Gamma, \Bhu^{\lambda\beta\alpha} \Jcch_\alpha^\Gamma, 0, 0, 0, 0, 0),
\end{equation}
and
\begin{equation*}
    Y = \biggl( \Jc_0^\lambda \Bigr(\Qhu_{\mu\nu}+\frac{2}{\tau}\bigl( \whu_{(\mu \nu)} - \Gammahu^\gamma_{\mu\nu} \vhu_\gamma\bigl) \Bigr), - \Jc^\lambda_0\ghu^{\alpha\sigma}\ghu^{\beta\delta}\hhu_{\mu\sigma\delta}\whu_{\alpha\beta}, 0, \Jc_0^\alpha \hhu_{\alpha\mu\nu}, \Jc_0^\alpha \whu_{\alpha\mu}, \Jc^\alpha_0\zhu_{\alpha}, \Jc^\lambda_\nu \Jsc^\mu_\lambda, \chihu^\mu \biggr)^{\tr},
\end{equation*}
with $t_1\in(0,t_0]$ and $Z_0$ is chosen to be same as that in \cite[Eq.~(5.56)]{BeyerOliynyk:2024b} to \cite[Eq.~(5.71)]{BeyerOliynyk:2024b}. Following the argument in \cite[Eq.~(5.88)]{BeyerOliynyk:2024b} to \cite[Eq.~(5.91)]{BeyerOliynyk:2024b}, the system \eqref{Lag-IVP-1} to \eqref{Lag-IVP-2} is a symmetric hyperbolic one. Also we use the Lagrangian map $l$ to pull back the equation \cite[Eq.~(5.20)]{BeyerOliynyk:2024b}, and the corresponding initial value problem for the wave gauge vector field $X^\mu$ is given by
\begin{align}
    \nabla_{\mu}\nabla^{\mu}X^{\nu} + 2\nabla_\mu \ln(\tau)\nabla^{[\mu}X^{\nu]} + \nabla_\mu \ln(\tau)\nabla^{\nu}X^\mu + \bigl(R^{\nu}{}_{\mu} - \nabla^{\nu}\nabla_\mu \ln(\tau) - 2\nabla^\nu\ln(\tau)\nabla_\mu\ln(\tau)\bigr) X^\mu = 0, \label{Lag-IVP-3}
\end{align}
in $M_{t_1,t_0}=(t_1,t_0]\times \Tbb^{n-1}$ with the initial conditions
\begin{align}
    X^\mu &= 0 \hspace{0.4cm} \text{in $\Sigma_{t_0}=\{t_0\}\times \Tbb^{n-1}$}, \label{Lag-IVP-4} \\
    \del{t}X^\mu &= 0 \hspace{0.4cm} \text{in $\Sigma_{t_0}=\{t_0\}\times \Tbb^{n-1}$}. \label{Lag-IVP-5}
\end{align}
In the following section, we also deal with IVPs that are identical to the systems \eqref{Lag-IVP-1} to \eqref{Lag-IVP-2} and \eqref{Lag-IVP-3} to \eqref{Lag-IVP-5}, but with the domains changed to $\Omega_{\Icv}$ and the initial hypersurface changed to $\{t_0\}\times\mathbb{B}_{\rho_0}$.

\subsection{Local-in-time existence on $M_{t_1,t_0}$ and $\Omega_{\Icv}$.}
In this section, we prove the local well-posedness of the system \eqref{EEb.1} to \eqref{EEb.6} and the propagation of the constraints system \eqref{A-cnstr-2} to \eqref{H-cnstr-2}, both on $M_{t_1,t_0}$ and $\Omega_{\Icv}$ with $\Icv=(t_0,t_1,\rho_0,\rho_1,\ep_2)$. To be clear, the initial value problems are
\begin{align}
    \del{t}\Wt + \Bt^\Omega \del{\Omega}\Wt &= \Tt \hspace{0.7cm} \text{in $M_{t_1,t_0}=(t_1,t_0]\times \Tbb^{n-1}$}, \label{te-local-1} \\
    \Wt &= \Wt_0 \hspace{0.4cm} \text{in $\Sigma_{t_0}=\{t_0\}\times \Tbb^{n-1}$}, \label{te-local-2}
\end{align}
and
\begin{align}
    \del{t}\Wt + \Bt^\Omega \del{\Omega}\Wt &= \Tt \hspace{0.7cm} \text{in $\Omega_{\Icv}$}, \label{te-local-3} \\
    \Wt &= \Wt_0 \hspace{0.4cm} \text{in $\{t_0\}\times \mathbb{B}_{\rho_0}$}, \label{te-local-4}
\end{align}
where $\Wt=(\et_A^\Omega,\Ct_A{}^C{}_B,\Hct,\Sigmat_{AB},\alphat,\Ut_A)$, $\Bt^\Omega$ is the coefficient matrix of the spatial derivative, $\Tt$ consists of the source terms and $\Wt_0$ is the initial value to be determined later. Since in the end we do not need to analyze these IVPs directly, we choose not to present the expressions for these matrices explicitly.

From standard theories of symmetric hyperbolic systems and wave equations, for instance \cite[Thm.~10.1]{BenzoniSerre:2007} and \cite[Prop.~9.12]{Ringstrom-CauchyGR:2009}, we have the local-in-time existence, uniqueness and continuation principle of the IVPs \eqref{Lag-IVP-1} to \eqref{Lag-IVP-2} and \eqref{Lag-IVP-3} to \eqref{Lag-IVP-5}. In Section~\ref{gauges-veri}, we have verified that the choices of gauges in the Lagrangian coordinates align with the frame formalism in the present article, thus using the initial data prescribed in \cite[Eq.~(5.56) to (5.71)]{BeyerOliynyk:2024b} which satisfies the gravitational and wave gauge constraints, we can construct an orthonormal frame $\{\mathring{\et}_a\}$ on the initial hypersurface $\{t_0\}\times\Tbb^{n-1}$. Then apply the Fermi-Walker transport in \eqref{Fermi-tran} to propagate $\{\mathring{\et}_a\}$ and get an orthonormal frame $\{\et_a\}$ on $M_{t_1,t_0}$. Following the procedure detailed in Section~\ref{conn-comm-coef}, we construct the fields $\{\et_A^\Omega, \Ct_A{}^C{}_B, \Hct, \Sigmat_{AB}, \alphat, \Ut_A\}$ on $M_{t_1,t_0}$, such that the constraints \eqref{A-cnstr-2} to \eqref{H-cnstr-2} also vanish on $M_{t_1,t_0}$. Collectively, these arguments establish the following proposition:
\begin{prop} \label{global-local}
    Suppose $t_0>0$, $k_0\in\Zbb_{>\frac{n-1}{2}+1}$ and $\Wt_0\in H^{k_0}(\Tbb^{n-1})$, then there exists a $t_1\in(0,t_0)$ and a unique classical solution $\Wt\in C^1(M_{t_1,t_0})$ to the IVP \eqref{te-local-1} to \eqref{te-local-2} such that $\Wt\in\cap_{i=0}^{k_0}C^i\bigl((t_1,t_0],H^{k_0-i}(\Tbb^{n-1})\bigr)$ and satisfies the continuation principle, i.e. if $\sup_{t_1<t<t_0}\norm{\Wt}_{W^{1,\infty}(\Tbb^{n-1})}<\infty$, then there exists a time $\tilde{t}_1\in[0,t_1)$, such that $\Wt$ can be uniquely continued to a classical solution on $M_{\tilde{t}_1,t_0}$.
    
    Moreover, if the initial data $\Wt_0$ is chosen so that the constraints \eqref{A-cnstr-2} to \eqref{H-cnstr-2} vanish on $\Sigma_{t_0}$, then the constraints vanish on $M_{t_1,t_0}$.
\end{prop}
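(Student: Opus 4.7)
My plan is to transfer the local well-posedness theory of the Lagrangian formulation \eqref{Lag-IVP-1}--\eqref{Lag-IVP-2} to the frame system \eqref{te-local-1}--\eqref{te-local-2}, exploiting the fact that both formulations describe the same geometric solution of the conformal Einstein-scalar field equations once the gauge choices are aligned, as verified in Section~\ref{gauges-veri}. Given initial data $\Wt_0\in H^{k_0}(\Tbb^{n-1})$ satisfying the constraints \eqref{A-cnstr-2}--\eqref{H-cnstr-2} on $\Sigma_{t_0}$, I would first reconstruct an admissible initial datum $Z_0$ for the Lagrangian system: the conformal metric $\ghu_{\mu\nu}$ on $\Sigma_{t_0}$, together with its first-order data $\hhu_{\beta\mu\nu}$, $\vhu_\mu$ and $\whu_{\beta\mu}$, is read off from the frame variables $(\et_A^\Omega, \alphat, \Hct, \Sigmat_{AB}, \Ut_A)$ via the connection formulae \eqref{connect-form}, while $\Jc^\mu_\nu$ and $l^\mu$ are initialized trivially. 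The vanishing of the constraints at $\Sigma_{t_0}$ then guarantees that $Z_0$ meets the requirements prescribed in \cite[Eq.~(5.56)--(5.71)]{BeyerOliynyk:2024b}.

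Next, the symmetric hyperbolicity of \eqref{Lag-IVP-1}, together with the standard local theory \cite[Thm.~10.1]{BenzoniSerre:2007}, yields a unique $Z\in\bigcap_{i=0}^{k_0}C^i\bigl((t_1,t_0],H^{k_0-i}(\Tbb^{n-1})\bigr)$ for some $t_1\in(0,t_0)$, together with a continuation principle controlled by a $W^{1,\infty}$-bound on $Z$. I would then solve the wave-gauge propagation equation \eqref{Lag-IVP-3}--\eqref{Lag-IVP-5} using \cite[Prop.~9.12]{Ringstrom-CauchyGR:2009}; since both $X^\mu$ and $\del{t}X^\mu$ vanish on $\Sigma_{t_0}$, and since the initial data have been prepared so that the constraints force the next normal derivative of the wave-gauge source to vanish as well, uniqueness for the hyperbolic wave equation forces $X^\mu\equiv 0$ throughout $M_{t_1,t_0}$. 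This upgrades the reduced solution $Z$ to a genuine solution of the conformal Einstein-scalar field equations. From the resulting spacetime $(\gt,\tau)$ I would build the orthonormal frame $\{\et_a\}$ by setting $\et_0=\alphat^{-1}\del{t}$ and propagating the spatial triad by Fermi-Walker transport \eqref{Fermi-tran} from an orthonormal initial triad on $\Sigma_{t_0}$, and then define $\Wt = (\et_A^\Omega, \Ct_A{}^C{}_B, \Hct, \Sigmat_{AB}, \alphat, \Ut_A)$ through the commutator decompositions \eqref{comm-decomp-0}--\eqref{St-decomp}.

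By the derivations in Sections~\ref{conf-ESF}--\ref{time-gauge}, the variables $\Wt$ so constructed automatically satisfy the evolution equations \eqref{EEb.1}--\eqref{EEb.6}. The geometric constraints \eqref{A-cnstr-2}--\eqref{D-cnstr-2} reduce to commutator identities for $\{\et_a\}$ applied to the coordinate functions and to the Jacobi identity, and are therefore automatic for any orthonormal frame propagated by Fermi-Walker; the momentum and Hamiltonian constraints \eqref{M-cnstr-2}--\eqref{H-cnstr-2} follow from the Einstein equations themselves and are thus also propagated. Uniqueness for \eqref{te-local-1}--\eqref{te-local-2} and the continuation principle transfer via the same correspondence, since any $W^{1,\infty}$-bound on $\Wt$ controls the same norm for the reconstructed $Z$. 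The main obstacle I foresee is the two-way regularity bookkeeping between $\Wt$ and $Z$: because $Z$ carries first derivatives of the metric as independent unknowns while $\Wt$ packages them inside the frame and connection coefficients, one must check that the chosen $H^{k_0}$ regularity with $k_0>(n-1)/2+1$ is preserved under both directions of the correspondence and is high enough to sustain the Moser-type product estimates appearing in the nonlinearities of both systems.
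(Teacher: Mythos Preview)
Your proposal is correct and follows essentially the same route as the paper: solve the Lagrangian IVP \eqref{Lag-IVP-1}--\eqref{Lag-IVP-2} and the wave-gauge propagation equation \eqref{Lag-IVP-3}--\eqref{Lag-IVP-5} via the cited standard results, then rebuild the orthonormal frame by Fermi--Walker transport and read off $\Wt$ through the commutator decompositions, so that the evolution equations and all constraints hold automatically. The paper's own argument is just the paragraph preceding the proposition and is in fact less detailed than what you wrote; your identification of the regularity bookkeeping between $\Wt$ and $Z$ as the only nontrivial check is exactly the point the paper leaves implicit.
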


To establish a result for the IVP \eqref{te-local-3} to \eqref{te-local-4} analogous to that for \eqref{te-local-1} to \eqref{te-local-2}, it is necessary to demonstrate that the boundary $\Omega_{\Icv}$, i.e. $\Gamma_{\Icv}$, is weakly spacelike with respect to the equations \eqref{Lag-IVP-1} and \eqref{Lag-IVP-3}. We mention here that the principal parts of \eqref{Lag-IVP-1} and \eqref{Lag-IVP-3} are essentially the same since both of them originate from the wave operator $\Box_g$. Thus, if $\Gamma_{\Icv}$ is weakly spacelike with respect to either one of the equations, then it will also be weakly spacelike with respect to the other one. Let $n_{\mu}$ denote the normal vector to $\Gamma_{\Icv}$ and define the matrix
\begin{equation*} \label{Pb-def}
    \Pb := n_0 P^0 + n_{\Gamma}P^{\Gamma}.
\end{equation*}
Using equations \eqref{normal}, \eqref{P0-def} and \eqref{PGamma-def}, we obtain
\begin{align}
    \Pb &= \diag\Biggl(-\frac{\rho_1}{t^{\ep_2}}\Bhu^{\lambda\beta\alpha} \Jcch_\alpha^0, -\frac{\rho_1}{t^{\ep_2}}\Bhu^{\lambda\beta\alpha} \Jcch_\alpha^0, -\frac{\rho_1}{t^{\ep_2}}\Bhu^{\lambda\beta\alpha} \Jcch_\alpha^0, -\frac{\rho_1}{t^{\ep_2}}\delta_\alpha^\mu \delta_\beta^\nu, -\frac{\rho_1}{t^{\ep_2}}\delta_\alpha^\mu, -\frac{\rho_1}{t^{\ep_2}}, -\frac{\rho_1}{t^{\ep_2}}\delta_\alpha^\nu \delta_\mu^\beta, -\frac{\rho_1}{t^{\ep_2}}\delta_\alpha^\mu\Biggr) \notag \\
        &\quad + \diag\Biggl(\frac{x_{\Gamma}}{|x|}\Bhu^{\lambda\beta\alpha} \Jcch_\alpha^\Gamma, \frac{x_{\Gamma}}{|x|}\Bhu^{\lambda\beta\alpha} \Jcch_\alpha^\Gamma, \frac{x_{\Gamma}}{|x|}\Bhu^{\lambda\beta\alpha} \Jcch_\alpha^\Gamma, 0, 0, 0, 0, 0\Biggr), \label{Pb-fix}
\end{align}
where $x_{\Gamma}=x^\Lambda\delta_{\Lambda\Gamma}$. From \cite[Eq.~(5.56)]{BeyerOliynyk:2024b}, the following identity holds:
\begin{equation} \label{Bhu-1}
    \Bhu^{\lambda\beta\alpha}\Jcch^0_\alpha = -\frac{\chihu^\sigma \Jcch^0_\sigma}{|\chihu|_{\ghu}^2}\chihu^\beta\chihu^\lambda - \chihu^\lambda \pi^{\beta\sigma}\Jcch^0_\sigma - \chihu^\beta \pi^{\lambda\sigma}\Jcch^0_\sigma + \pi^{\lambda \beta}\chihu^\sigma \Jcch^0_\sigma,
\end{equation}
and similarly,
\begin{equation} \label{Bhu-2}
    \Bhu^{\lambda\beta\alpha}\Jcch^\Gamma_\alpha = -\frac{\chihu^\sigma \Jcch^\Gamma_\sigma}{|\chihu|_{\ghu}^2}\chihu^\beta\chihu^\lambda - \chihu^\lambda \pi^{\beta\sigma}\Jcch^\Gamma_\sigma - \chihu^\beta \pi^{\lambda\sigma}\Jcch^\Gamma_\sigma + \pi^{\lambda \beta}\chihu^\sigma \Jcch^\Gamma_\sigma,
\end{equation}
where
\begin{equation*} \label{pi-def} 
    \pi^\gamma_\lambda = \delta^\gamma_\lambda - \frac{\chihu^\gamma \chihu_\lambda}{|\chihu|_{\ghu}^2}
\end{equation*}
defines the projection onto the $\ghu$-orthogonal subspace to $\chihu^\mu$ and we set $\pi^{\gamma\lambda}=\ghu^{\lambda\sigma}\pi_\sigma^\gamma$. For simplicity, define the quantity
\begin{equation} \label{J-al-def}
    J_\alpha := \frac{\rho_1}{t^{\ep_2}}\Jcch_\alpha^0 - \frac{x_{\Gamma}}{|x|}\Jcch_\alpha^\Gamma.
\end{equation}
Using \eqref{Bhu-1}, \eqref{Bhu-2} and \eqref{J-al-def}, it follows that
\begin{equation*}
    -\frac{\rho_1}{t^{\ep_2}}\Bhu^{\lambda\beta\alpha} \Jcch_\alpha^0 + \frac{x_{\Gamma}}{|x|}\Bhu^{\lambda\beta\alpha} \Jcch_\alpha^\Gamma = -\Biggl( -\frac{\chihu^\sigma J_\sigma}{|\chihu|_{\ghu}^2}\chihu^\beta\chihu^\lambda - \chihu^\lambda \pi^{\beta\sigma}J_\sigma - \chihu^\beta \pi^{\lambda\sigma}J_\sigma + \pi^{\lambda \beta}\chihu^\sigma J_\sigma \Biggr).
\end{equation*}
Since we assume that $\chih^\mu$ is timelike, i.e. $|\chihu|_{\ghu}^2<0$, it can be verified that as long as $J_\alpha$ satisfies the conditions
\begin{equation} \label{J-al-con1}
    \chihu^\sigma J_\sigma > 0 \AND \ghu^{\alpha\beta}J_\alpha J_\beta < 0,
\end{equation}
then
\begin{equation*}
    -\frac{\rho_1}{t^{\ep_2}}\Bhu^{\lambda\beta\alpha} \Jcch_\alpha^0 + \frac{x_{\Gamma}}{|x|}\Bhu^{\lambda\beta\alpha} \Jcch_\alpha^\Gamma < 0.
\end{equation*}
To consider the conditions in \eqref{J-al-con1}, first we note from \cite[Eq.~(5.36)]{BeyerOliynyk:2024b} that $\chihu^\mu=\del{0}l^\mu=\Jc_0^\mu$. Thus,
\begin{equation*}
    \chihu^\sigma J_\sigma = \Jc_0^\sigma \Biggl(\frac{\rho_1}{t^{\ep_2}}\Jcch_\sigma^0 - \frac{x_{\Gamma}}{|x|}\Jcch_\sigma^\Gamma \Biggr) = \frac{n\rho_1}{t^{\ep_2}} > 0,
\end{equation*}
where we have used that facts that $\Jc_\mu^\sigma\Jcch_\sigma^\nu=\delta_\mu^\nu$ and $\rho_1>0$. Furthermore,
\begin{equation} \label{J-al-con2}
    \ghu^{\alpha\beta}J_\alpha J_\beta = \ghu^{\alpha\beta} \Biggl(\frac{\rho_1}{t^{\ep_2}}\Jcch_\alpha^0 - \frac{x_{\Gamma}}{|x|}\Jcch_\alpha^\Gamma \Biggr) \Biggl(\frac{\rho_1}{t^{\ep_2}}\Jcch_\beta^0 - \frac{x_{\Gamma}}{|x|}\Jcch_\beta^\Gamma \Biggr) = \frac{\rho_1^2}{t^{2\ep_2}}g^{00} + \frac{x_\Gamma x_\Sigma}{|x|^2}g^{\Gamma\Sigma},
\end{equation}
which follows from the identity $\ghu^{\alpha\beta}\Jcch_\alpha^\mu \Jcch_\beta^\nu=g^{\mu\nu}$ and the fact that the shift is zero, which means that $g^{0\Gamma}$ is also zero. The metric components can be expressed using the variables from the frame formalism in Section~\ref{Tetrad-formalism}. Since the shift is zero, we have
\begin{equation} \label{g00}
    g^{00} = -\frac{1}{\alphat^2} \AND g^{\Gamma\Sigma} = \delta^{AB}\et_A^\Gamma \et_B^\Sigma,
\end{equation}
where $\et_A$ denotes the spatially orthogonal frame constructed in Section~\ref{Tetrad-formalism}. Substituting \eqref{g00} into the second condition in \eqref{J-al-con2} and applying the Cauchy-Schwarz inequality along with the inequality \eqref{CS-ineq}, we derive
\begin{align}
    \ghu^{\alpha\beta}J_\alpha J_\beta &= -\frac{\rho_1^2}{t^{2\ep_2}\alphat^2} + \frac{x_\Gamma x_\Sigma}{|x|^2} \delta^{AB}\et_A^\Gamma \et_B^\Sigma \leq -\frac{\rho_1^2}{t^{2\ep_2}\alphat^2} + \Biggl( \sum_{\Gamma,\Sigma=1}^{n-1} \bigl(\delta^{AB}\et_A^\Gamma \et_B^\Sigma\bigr)^2 \Biggr)^{\frac{1}{2}} \notag \\
        &\leq -\frac{\rho_1^2}{t^{2\ep_2}\alphat^2} + \Biggl( \sum_{\Gamma,\Sigma=1}^{n-1} (n-1)\delta^{AB}\bigl(\et_A^\Gamma \et_B^\Sigma\bigr)^2 \Biggr)^{\frac{1}{2}} \notag \\
        &\leq -\frac{\rho_1^2}{t^{2\ep_2}\alphat^2} + \Biggl( (n-1)\sum_{\Gamma,\Sigma=1}^{n-1} \sum_{A,B=1}^{n-1} \bigl(\et_A^\Gamma\bigr)^2 \bigl(\et_B^\Sigma\bigr)^2 \Biggr)^{\frac{1}{2}} \notag \\
        &= -\frac{\rho_1^2}{t^{2\ep_2}\alphat^2} + \sqrt{n-1}|\et|^2. \label{J-al-est}
\end{align}
Therefore, if the condition
\begin{equation} \label{Pb-con}
    \sup_{(t,x)\in\Omega_{\Icv}}t^{\ep_2}|\alphat(t,x)||\et(t,x)| < \frac{\rho_1}{(n-1)^{\frac{1}{4}}},
\end{equation}
is satisfied, then it follows from \eqref{Pb-fix}, \eqref{J-al-con1} and \eqref{J-al-est} that $\Gamma_{\Icv}$ is weakly spacelike with respect to both \eqref{Lag-IVP-1} and \eqref{Lag-IVP-3}. Observe that \eqref{Pb-con} is an open condition, and if we choose initial data that satisfies this inequality, then by continuity, it will continue to hold for a sufficiently short time. In summary, we have proved the following proposition:

\begin{prop} \label{local-local}
    Suppose $t_0>0$, $k_0\in\Zbb_{>\frac{n-1}{2}+1}$, $0<\rho_0<L$, $\ep_2$ satisfies \eqref{ep-con-1}, $\Wt_0\in H^{k_0}(\mathbb{B}_{\rho_0})$ and its components $\alphat(x)$ and $\et_A^\Omega(x)$\footnote{Here, the dependence of $\alphat$ and $\et_A^\Omega$ is only $x$, indicating they are initial data. Later in this theorem, we use $\alphat(t,x)$ and $\et_A^\Omega(t,x)$ to denote the solutions on $M_{t_1,t_0}$ and $\Omega_{\Icv}$.}, satisfy that $\inf_{x\in\mathbb{B}_{\rho_0}}\alphat(x)>0$, $\inf_{x\in\mathbb{B}_{\rho_0}}\det(\et_A^\Omega(x))>0$ and the inequality
    \begin{equation*}
        \sup_{x\in\mathbb{B}_{\rho_0}} t_0^{\ep_2}|\alphat(x)||\et(x)| < \frac{\rho_1}{(n-1)^{\frac{1}{4}}},
    \end{equation*}
    then there exists a $t_1\in(0,t_0)$ and a unique classical solution $\Wt\in C^1(\Omega_{\Icv})$ to the IVP \eqref{te-local-3} to \eqref{te-local-4}, with $\Icv=(t_0,t_1,\rho_0,\rho_1,\ep_2)$, that satisfies $\inf_{x\in\Omega_{\Icv}}\alphat(t,x)>0$ and $\inf_{x\in\Omega_{\Icv}}\det(\et_A^\Omega(t,x))>0$ and \, $\sup_{t_1<t\leq t_0}\norm{\del{t}^i\Wt(t)}_{H^{k_0-i}(\mathbb{B}_{\rho(t)})}<\infty$, with $i\in\{0, 1,...,k_0\}$ and $\rho(t)=\frac{\rho_1(t^{1-\ep_2}-t_0^{1-\ep_2})}{1-\ep_2}+\rho_0$. And $\Wt$ satisfies the continuation principle, i.e. if $\sup_{t_1<t<t_0}\norm{\Wt}_{W^{1,\infty}(\mathbb{B}_{\rho(t)})}<\infty$ and $\sup_{x\in\Omega_{\Icv}} t^{\ep_2}|\alphat(t,x)||\et(t,x)| < \frac{\rho_1}{(n-1)^{\frac{1}{4}}}$, then there exists a time $\tilde{t}_1\in[0,t_1)$, such that $\Wt$ can be uniquely continued to a classical solution on $\Omega_{\tilde{\Icv}}$ with $\tilde{\Icv}=(t_0,\tilde{t}_1,\rho_0,\rho_1,\ep_2)$.
    
    Moreover, if the initial data $\Wt_0$ is chosen so that the constraints \eqref{A-cnstr-2} to \eqref{H-cnstr-2} vanish on $\{t_0\}\times\mathbb{B}_{\rho_0}$, then the constraints vanish on $\Omega_{\Icv}$.
\end{prop}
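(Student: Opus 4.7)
The plan is to mirror the strategy used for Proposition~\ref{global-local}, replacing the global symmetric-hyperbolic existence theorem on $\Tbb^{n-1}$ with the corresponding existence theorem on a domain with weakly spacelike lateral boundary, cf.\ \cite[Thm.~4.5]{Lax:2006}. The first step is to verify that $\Gamma_{\Icv}$ is weakly spacelike with respect to the Lagrangian symmetric-hyperbolic system \eqref{Lag-IVP-1} (and hence also for \eqref{Lag-IVP-3}, which shares the same principal part). This was essentially carried out in the paragraphs preceding the proposition: the positivity of $-\Pb$ reduces, via \eqref{Bhu-1}--\eqref{J-al-def}, to the two conditions $\chihu^\sigma J_\sigma>0$ and $\ghu^{\alpha\beta}J_\alpha J_\beta<0$. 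The first holds because $\chihu^\mu=\Jc_0^\mu$ and $\rho_1>0$, while the second is implied by the Cauchy--Schwarz bound \eqref{J-al-est} together with the hypothesis $t_0^{\ep_2}|\alphat||\et|<\rho_1/(n-1)^{1/4}$ at $t=t_0$. Since this is an open condition on the initial hypersurface, by continuity it persists on some $\Omega_{\Icv}$ with $t_1$ sufficiently close to $t_0$.

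Next, I would apply local existence theory for symmetric-hyperbolic initial value problems on domains with weakly spacelike lateral boundary to the Lagrangian IVP \eqref{Lag-IVP-1}--\eqref{Lag-IVP-2} and to the wave-gauge propagation equation \eqref{Lag-IVP-3}--\eqref{Lag-IVP-5}, using the initial data constructed in \cite[Eq.~(5.56)--(5.71)]{BeyerOliynyk:2024b} from $\Wt_0$. This provides, for some $t_1\in(0,t_0)$, a solution to the Einstein-scalar field equations in Lagrangian coordinates on $\Omega_{\Icv}$ satisfying the analogue of the regularity asserted in Proposition~\ref{global-local}, with uniqueness guaranteed by the weakly spacelike character of $\Gamma_{\Icv}$.

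Third, from this Lagrangian solution I would reconstruct the frame variables $\Wt$ exactly as in the proof of Proposition~\ref{global-local}: starting from an orthonormal frame $\{\mathring{\et}_a\}$ on $\{t_0\}\times\mathbb{B}_{\rho_0}$ compatible with the gauge and constraint data, propagate it by Fermi--Walker transport \eqref{Fermi-tran} along the solution, and then extract $(\et_A^\Omega,\Ct_A{}^C{}_B,\Hct,\Sigmat_{AB},\alphat,\Ut_A)$ via the formulas of Section~\ref{conn-comm-coef}. Because the gauge choices in \cite[\S 5]{BeyerOliynyk:2024b} have been matched in Section~\ref{gauges-veri} with the zero-shift, $\tau=t$ choices used here, the resulting $\Wt$ solves \eqref{te-local-3}--\eqref{te-local-4} classically, inherits the claimed regularity $\del{t}^i\Wt\in H^{k_0-i}(\mathbb{B}_{\rho(t)})$, and preserves $\alphat>0$ and $\det(\et_A^\Omega)>0$ by continuity from the hypotheses at $t_0$. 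Uniqueness for the frame IVP follows from the smooth bijection with the Lagrangian variables and the uniqueness of the latter. The constraint propagation statement follows in two equivalent ways: directly from the physical equivalence of the two systems together with the vanishing of the gravitational and wave-gauge constraints at $t_0$, or by extending $\Wt_0$ to $\Tbb^{n-1}$, invoking Proposition~\ref{global-local}, and restricting back to $\Omega_{\Icv}$ using the weakly spacelike character of $\Gamma_{\Icv}$.

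Finally, for the continuation principle I would combine the standard continuation theorem for symmetric-hyperbolic systems applied to \eqref{Lag-IVP-1} with the pointwise openness of the weakly spacelike condition \eqref{Pb-con}: if $\sup_{t_1<t<t_0}\|\Wt\|_{W^{1,\infty}(\mathbb{B}_{\rho(t)})}<\infty$ and the strict inequality $t^{\ep_2}|\alphat||\et|<\rho_1/(n-1)^{1/4}$ holds uniformly on $\Omega_{\Icv}$, then both the Lagrangian variables and the geometry of $\Gamma_{\Icv}$ can be extended past $t_1$. I expect the main obstacle to be precisely this second ingredient: one must insist in the continuation hypothesis on the persistence of the smallness condition \eqref{Pb-con}, since otherwise the lateral boundary could cease to be weakly spacelike and both existence and uniqueness on the truncated cone could fail; the bookkeeping required to propagate this open condition in tandem with the Sobolev bounds, rather than any single estimate, is what makes this step the most delicate part of the argument.
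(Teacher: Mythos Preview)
Your proposal is correct and follows essentially the same approach as the paper: verify that $\Gamma_{\Icv}$ is weakly spacelike for the Lagrangian system via the computation of $\Pb$ and the condition \eqref{Pb-con}, note that this open condition persists by continuity for short time, and then invoke the same Lagrangian-to-frame reconstruction used for Proposition~\ref{global-local}. The paper's argument is terser than yours (it simply says ``In summary, we have proved the following proposition'' after the weakly spacelike computation), but your more explicit treatment of the continuation principle and the two alternative routes to constraint propagation are both consistent with the paper's framework.
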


\section{Localized Big Bang Stability}
We are now ready to establish the past global-in-time stability of the sub-critical Kasner metrics on a truncated cone domain, along with a description of the perturbed solution near $t=0$, e.g. asymptotic behavior and curvature invariants blow up.

\subsection{Physical quantities in terms of the rescaled variables} \label{physical-quant}
Prior to stating the main theorem, we express the physical curvature invariants $\Rb$ and $\Rb_{ab}\Rb^{ab}$, the second fundamental form associated with both the physical metric $\gb$ and the conformal metric $\gt$ on the $t=constant$ hypersurfaces, as well as certain components of the Weyl tensor in terms of the rescaled variables defined in \eqref{Hc-def} to \eqref{C-def}. We stress here that all the tensors are expressed relative to the conformal orthonormal frame $\{\et_a\}$.

We begin by calculating the physical Ricci tensor $\Rb_{ab}$. From \eqref{ESF.3} and \eqref{Kasner-metric}, we have
\begin{equation} \label{Rbab}
    \Rb_{ab} = \Tb_{ab} - \frac{1}{n-2}\gb^{cd}\Tb_{cd}\gb_{ab} = \Tb_{ab} - \frac{1}{n-2}\eta^{cd}\Tb_{cd}\eta_{ab},
\end{equation}
where we have used the fact that $\gb^{ab}=t^{-\frac{2}{n-2}}\eta^{ab}$. For use below, we list the components of $\Rb_{ab}$:
\begin{align}
    \Rb_{00} &= \frac{n-3}{n-2}\Tb_{00} + \frac{1}{n-2}\Tb_A{}^A, \label{Rb00} \\
    \Rb_{0A} &= \Tb_{0A}, \label{Rb0A} \\
    \Rb_{AB} &= \Tb_{AB} + \frac{1}{n-2}\Tb_{00}\delta_{AB} - \frac{1}{n-2}\Tb_C{}^C \delta_{AB}, \label{RbAB}
\end{align}
where $\Tb_A{}^A=\delta^{AB}\Tb_{AB}$. Using \eqref{Rbab}, the physical scalar curvature $\Rb$ is given by
\begin{equation*}
    \Rb = \gb^{ab}\Rb_{ab} = \gb^{ab}(\Tb_{ab} - \frac{1}{n-2}\eta^{cd}\Tb_{cd}\eta_{ab}) = -\frac{2}{n-2}t^{-\frac{2}{n-2}}(-\Tb_{00}+\Tb_A{}^A).
\end{equation*}
From \eqref{Tb-Upsilon-cmpts} and a short calculation, it follows that
\begin{equation} \label{physi-scalar}
    \Rb = -\frac{2}{n-2}t^{-\frac{2}{n-2}}(-\Tb_{00}+\Tb_A{}^A) = -\frac{n-1}{(n-2)\alpha^2 t^{\frac{2n-2}{n-2}-2\ep_1}}.
\end{equation}
Similarly, using \eqref{Tb-Upsilon-cmpts} and \eqref{Rbab} to \eqref{RbAB}, the invariant $\Rb_{ab}\Rb^{ab}$ is given by
\begin{align} \label{physi-Ricci}
    \Rb_{ab}\Rb^{ab} &= \gb^{ac}\gb^{bd}\Rb_{ab}\Rb_{cd} = t^{-\frac{4}{n-2}}\eta^{ac}\eta^{bd}\Rb_{ab}\Rb_{cd} \notag \\
        &= t^{-\frac{4}{n-2}}(\Rb_{00}\Rb_{00} - 2\delta^{AB}\Rb_{0A}\Rb_{0B} + \delta^{AC}\delta^{BD}\Rb_{AB}\Rb_{CD}) \notag \\
        &= \frac{(n-1)^2}{(n-2)^2 \alpha^4 t^{\frac{4n-4}{n-2}-4\ep_1}}.
\end{align}

Now let $\{\thetat^a\}$ be the dual basis of $\{\et_a\}$, that is we have $\thetat^a(\et_b)=\delta_b^a$. Then the conformal metric $\gt$ can be expressed as
\begin{equation*}
    \gt = -\thetat^0\otimes\thetat^0 + \gttt_{AB}\thetat^A\otimes\thetat^B,
\end{equation*}
where $\gttt_{AB}=\delta_{AB}$ and since we employ the zero shift gauge, $\gttt_{AB}$ correspond to the spatial metric. Observe that the lapse and the unit normal vector of $\gt$ on the $t=constant$ hypersurface are $\alphat$ and $\et_0$, respectively. The second fundamental form $\Kttt_{AB}$ associated with $\gt$ is
\begin{align} \label{conf-sec-fund}
    \Kttt_{AB} &= \frac{1}{2\alphat}(\Ltt_{\alphat\et_0}\gttt)(\et_A, \et_B) \notag \\
        &= \frac{1}{2\alphat}\Bigl( \alphat\et_0\bigl(\gttt(\et_A,\et_B)\bigr) - \gttt\bigl([\alphat\et_0,\et_A], \et_B\bigr) - \gttt\bigl(\et_A, [\alphat\et_0,\et_B]\bigr) \Bigr) \notag \\
        &= \frac{1}{2\alpha t^{1-\ep_1}}(r_{AB} + 2\Hc\delta_{AB} + 2\Sigma_{AB}),
\end{align}
where $\Ltt$ denotes the Lie derivative, and we have used the decompositions \eqref{comm-decomp-0} and \eqref{comm-decomp-A}, the constraint \eqref{D-cnstr-2} and the definition for the rescaled variables in \eqref{Hc-def} to \eqref{alpha-def}.

Relative to the dual frame $\{\thetat^a\}$ defined above, the physical metric $\gb$, by virtue of \eqref{Kasner-metric}, takes the form
\begin{equation} \label{physi-metric}
    \gb = -t^{\frac{2}{n-2}}\thetat^0\otimes\thetat^0 + \gttb_{AB}\thetat^A\otimes\thetat^B,
\end{equation}
with spatial metric $\gttb_{AB}=t^{\frac{2}{n-2}}\delta_{AB}$. Notice that the lapse and the unit normal vector of $\gb$ on the $t=constant$ hypersurface are $\alphat t^{\frac{1}{n-2}}$ and $t^{-\frac{1}{n-2}}\et_0$, respectively. Hence, by a similar calculation, the second fundamental form $\Kttb_{AB}$ of $\gb$ is
\begin{align} \label{physi-sec-funda}
    \Kttb_{AB} &= \frac{1}{2\alphat t^{\frac{1}{n-2}}}\bigl(\Ltt_{\alphat\et_0}\gttb\bigr)(\et_A, \et_B) \notag \\
        &= \frac{1}{2\alphat t^{\frac{1}{n-2}}}\Bigl( \alphat\et_0\bigl(\gttb(\et_A,\et_B)\bigr) - \gttb\bigl([\alphat\et_0,\et_A], \et_B\bigr) - \gttb\bigl(\et_A, [\alphat\et_0,\et_B]\bigr) \Bigr) \notag \\
        &= \frac{1}{2\alpha t^{\frac{n-3}{n-2}-\ep_1}}\Biggl(\frac{2}{n-2}\delta_{AB} + r_{AB} + 2\Hc\delta_{AB} + 2\Sigma_{AB}\Biggr).
\end{align}

We now turn our attention to the invariants associated with the Weyl tensor. According to \cite[Eq.~(3.2.28)]{Wald:1994}, the Weyl tensor of $\gt$ is defined as
\begin{align}
    \Ct_{abcd} &= \Rt_{abcd} - \frac{1}{n-2}(\Rt_{ac}\gt_{bd} + \Rt_{bd}\gt_{ac} - \Rt_{ad}\gt_{bc} - \Rt_{bc}\gt_{ad}) \notag \\
        &\quad + \frac{\Rt}{2(n-1)(n-2)}(\gt_{ac}\gt_{bd} + \gt_{bd}\gt_{ac} - \gt_{ad}\gt_{bc} - \gt_{bc}\gt_{ad}). \label{Weyl-def}
\end{align}
For 4-dimensional spacetimes, from \cite[Eq.~(1.34)]{Wainwright-Ellis:1997}, the electric part of the Weyl tensor is defined as $\Et_{ab}=\Ct_{acbd}u^cu^d$, where $u^c$ denotes a preferred timelike vector field. In \cite[Thm. 6.1(e)]{BOZ:2025}, the authors have proved that the blow up of the Weyl tensor invariant actually originates from the electric part contracting with itself. For higher dimensional spacetimes, we aim to analyze the behavior of the analogous components. Selecting $u$ as $\et_0$, which implies $(\et_0)^c=\delta_0^c$, we focus on the components interested in are
\begin{equation*}
    \Ct_{acbd}(\et_0)^c(\et_0)^d = \Ct_{a0b0} \AND \Cb_{acbd}(\et_0)^c(\et_0)^d = \Cb_{a0b0}.
\end{equation*}
Utilizing the fact that the Weyl tensor shares the same symmetry properties of the Riemann tensor, i.e. anti-symmetry in its first and last two indices, along with definition \eqref{Weyl-def}, we find that the nonzero components of $\Ct_{a0b0}$ are
\begin{align} \label{EtAB-def}
    \Ct_{A0B0} = \Rt_{A0B0} - \frac{1}{n-2}(\Rt_{00}\delta_{AB}-\Rt_{AB}) - \frac{\Rt}{(n-1)(n-2)}\delta_{AB},
\end{align}
in which
\begin{align}
    \Rt_{AB} &= \eta^{cd}\Rt_{AcBd} = -\Rt_{A0B0} + \delta^{CD}\Rt_{ACBD}, \label{RtAB} \\
    \Rt_{00} &= \eta^{ab}\Rt_{0a0b} = -\Rt_{0000} + \delta^{AB}\Rt_{A0B0}, \label{Rt00} \\
    \Rt &= \eta^{ab}\Rt_{ab} = \Rt_{0000} - 2\delta^{AB}\Rt_{A0B0} + \delta^{AB}\delta^{CD}\Rt_{ACBD}. \label{Rt}
\end{align}
Substituting expressions \eqref{RtAB} to \eqref{Rt} into \eqref{EtAB-def} yields the expression for $\Ct_{A0B0}$ in terms of the Riemann tensors:
\begin{align}
    \Ct_{A0B0} &= \frac{n-3}{n-2}\Rt_{A0B0} - \frac{n-3}{(n-1)(n-2)}\delta_{AB}\delta^{CD}\Rt_{C0D0} \notag \\
        &\quad + \frac{1}{n-2}\delta^{CD}\Rt_{ACBD} - \frac{1}{(n-1)(n-2)}\delta_{AB}\delta^{EF}\delta^{CD}\Rt_{ECFD}, \label{EtAB-1}
\end{align}
where we have used the fact that $\Rt_{0000}=0$. A direct calculation using \eqref{connect-form}, \eqref{Riemann}, \eqref{EEb.3} and \eqref{EEb.4} provides the following identities:
\begin{align*}
    \Rt_{A0B0} &= \frac{1}{n-1}\Sigmat_{CD}\Sigmat^{CD}\delta_{AB} - \frac{1}{\alphat t}\Hct\delta_{AB} + (n-3)\Hct\Sigmat_{AB} - \Sigmat_A{}^C\Sigmat_{BC} + \frac{1}{\alphat t}\Sigmat_{AB} \notag \\
        &\quad + \et*\del{}\Ct + \Ct*\Ct, \\
    \Rt_{ABCD} &= \Hct^2\delta_{AC}\delta_{BD} + \Hct\Sigmat_{AC}\delta_{BD} + \Hct\Sigmat_{BD}\delta_{AC} + \Sigmat_{AC}\Sigmat_{BD} \notag \\
        &\quad - \Hct^2\delta_{AD}\delta_{BC} - \Hct\Sigmat_{AD}\delta_{BC} - \Hct\Sigmat_{BC}\delta_{AD} - \Sigmat_{AD}\Sigmat_{BC} + \et*\del{}\Ct + \Ct*\Ct.
\end{align*}
In these expressions, $\del{}$ denotes the spatial derivatives of the tensors and see Section~\ref{Index} for the definition of the $*$ operator. Substituting these results into \eqref{EtAB-1} leads to
\begin{align*}
    \Ct_{A0B0} &= \frac{n-3}{(n-2)\alphat t}\Sigmat_{AB} + (n-3)\Hct\Sigmat_{AB} + \frac{1}{n-1}\Sigmat_{CD}\Sigmat^{CD}\delta_{AB} - \Sigmat_{AC}\Sigmat_B{}^C + \et*\del{}\Ct + \Ct*\Ct.
\end{align*}
According to \cite[Thm.~7.30]{JohnMLee-Rie:2018}, under conformal transformation \eqref{conf-metricA}, the Weyl tensor of the physical metric $\gb_{ab}$ is related to that of the conformal metric $\gt_{ab}$ via
\begin{equation} \label{conf-Weyl}
    \Cb_{abcd} = e^{2\Phi}\Ct_{abcd}.
\end{equation}
Defining $\Cb^{abcd}=\gb^{am}\gb^{bn}\gb^{cp}\gb^{dq}\Cb_{mnpq}$, and using \eqref{Kasner-rels-B}, \eqref{conf-metricA}, \eqref{f-metric}, \eqref{Phi-fix}, \eqref{D-cnstr-2}, the definitions for the rescaled variables \eqref{Hc-def} to \eqref{C-def} and the relation in \eqref{conf-Weyl}, a brief calculation shows that within the Weyl tensor invariant $\Cb_{acbd}\Cb^{acbd}$, the part $\Cb_{A0B0}\Cb^{A0B0}$ is given by
\begin{align} \label{Weyl-invar-def}
    \Cb_{A0B0}\Cb^{A0B0} = e^{-4\Phi}\delta^{AC}\delta^{BD}\Ct_{A0B0}\Ct_{C0D0} = \frac{1}{\alpha^4 t^{4+\frac{4}{n-2}-4\ep_1}}\delta^{AC}\delta^{BD}(t^2\alphat^2\Ct_{A0B0}\cdot t^2\alphat^2\Ct_{C0D0})
\end{align}
with
\begin{align} \label{CtA0B0}
    t^2\alphat^2\Ct_{A0B0} &= \frac{n-3}{2(n-2)}r_{AB} + \frac{r_0}{2(n-2)}\delta_{AB} + \frac{r_0}{4}r_{AB} - \frac{1}{4}r_{AC}r_B^C \notag \\
        &\quad + \frac{n-3}{n-2}\Sigma_{AB} + (n-3)\Hc\Sigma_{AB} + \Biggl(\frac{n-3}{2}r_{AB} - \frac{(n-3)r_0}{2(n-1)}\delta_{AB}\Biggr)\Hc \notag \\
        &\quad + \frac{(n-3)r_0}{2(n-1)}\Sigma_{AB} + \frac{1}{n-1}\Sigma_{CD}\Sigma^{CD}\delta_{AB} + \frac{1}{n-1}r_{CD}\Sigma^{CD}\delta_{AB} \notag \\
        &\quad - \Sigma_{AC}\Sigma_B{}^C - \Sigma_{C(A}r_{B)}^C + \frac{r_0}{n-1}\Sigma_{AB} + t^{1-\ep_2}e*\del{}C + (U+C)*C.
\end{align}

\begin{rem}
    The Riemann tensor can be decomposed into three parts, see \eqref{Weyl-def}, consisting of the Weyl curvature, the Ricci curvature and the scalar curvature respectively. By \eqref{Rb00} to \eqref{RbAB}, the Ricci and scalar curvature can be determined via the stress-energy tensor. In contrast, we may interpret the Weyl curvature as a purely gravitational element. We will see in theorem~\ref{main-thm} below that the big bang singularity not only arises from the matter field, but also from gravity itself. This is consistent with the findings in \cite[Thm.~6.1(e), 6.5(f)]{BOZ:2025}.
\end{rem}

\subsection{Past stability of the Kasner-scalar field metrics on $\Omega_{\Icv}$.}
\begin{thm} \label{main-thm}
    Suppose $T_0>0$, $k_0\in\Zbb_{>\frac{n-1}{2}+1}$, the conformal Kasner exponents $r_0$ and $r_A$, where $A=1,...,n-1$, satisfy the Kasner relations \eqref{Kasner-rels-B} and the subcritical condition \eqref{sub-cond-2}, $\ep_1,\ep_2\in\Rbb$ satisfy \eqref{ep-con-1}, $\nu\in\Rbb_{>0}$ satisfies \eqref{nu-con}, $k_1\in\Zbb_{\geq0}$ is chosen large enough such that for $\Bsc$ in \eqref{Bcvt-def}, $\frac{1}{2}(\Bsc^{\tr}+\Bsc)$ is positive definite and $0<\rho_0<L$, then there exists a $\delta>0$ such that for every $t_0\in(0,T_0]$ and $\delta_0\in(0,\delta]$, and \footnote{Here, the fields depending only on $x$ are initial data, while those depending on both $t$ and $x$ or without specifying dependence are solutions.}
    \begin{equation} \label{m-initial}
        W_0 = \bigl(e_A^\Omega(x), \alpha(x), C_{ABC}(x), U_A(x), \Hc(x), \Sigma_{AB}(x)\bigr)^{\tr} \in H^k(\mathbb{B}_{\rho_0}) \quad \text{with} \quad k=k_0+k_1,
    \end{equation}
    satisfying $\inf_{x\in\mathbb{B}_{\rho_0}}\alpha(x)>0$, $\inf_{x\in\mathbb{B}_{\rho_0}}\det(e_A^\Omega(x))>0$, the constraints \eqref{A-cnstr-3} to \eqref{H-cnstr-3} on $\{t_0\}\times\mathbb{B}_{\rho_0}$ and $\norm{W_0}_{H^k(\mathbb{B}_{\rho_0})}<\delta_0$, there exists a classical solution
    \begin{equation*}
        W = (e_A^\Omega, \alpha, C_{ABC}, U_A, \Hc, \Sigma_{AB})^{\tr} \in C^1(\Omega_{\Icv})
    \end{equation*}
    with $\Icv=(t_0,0,\rho_0,\rho_1,\ep_2)$ and $\rho_1$ satisfying \eqref{rho1-range}, that solves the evolution equations \eqref{EEc.1} to \eqref{EEc.6} on $\Omega_{\Icv}$, satisfies the initial condition $W|_{t=t_0}=W_0$ and the constraints \eqref{A-cnstr-3} to \eqref{H-cnstr-3} on $\Omega_{\Icv}$, $\alpha>0$ and $\det(e_A^\Omega)>0$. Furthermore,
    \begin{equation*}
        W(t) \in H^k(\mathbb{B}_{\rho(t)}) \AND \del{t}W(t) \in H^{k-1}(\mathbb{B}_{\rho(t)})
    \end{equation*}
    with $\rho(t)=\frac{\rho_1(t^{1-\ep_2}-t_0^{1-\ep_2})}{1-\ep_2}+\rho_0$ for all $t\in(0,t_0]$. Moreover, the following hold:
    \begin{enumerate}[(a)]
        \item The solution $W$ is uniformly bounded in the sense that
        \begin{equation*}
            \norm{W(t)}_{H^k(\mathbb{B}_{\rho(t)})} \lesssim \delta_0
        \end{equation*}
        for all $t\in(0,t_0]$ and there exist constants $\zeta>0$, $\delta_1>0$ and functions $\alphah, \Hchat, \Sigmah_{AB}\in H^{k-1}(\rhot_0)$ satisfying
        \begin{gather*}
            0 < \inf_{x\in\mathbb{B}_{\rhot_0}}\alphah(x) \leq \sup_{x\in\mathbb{B}_{\rhot_0}}\alphah(x) \lesssim 1, \quad \Sigmah_{AB} = \Sigmah_{BA}, \quad \delta^{AB}\Sigmah_{AB} = 0, \\
            \max\Bigl\{ \norm{\Hchat}_{L^\infty(\mathbb{B}_{\rhot_0})}, \norm{\Sigmah_{AB}}_{L^\infty(\mathbb{B}_{\rhot_0})} \Bigr\} \leq \delta_1 \AND \max\Bigl\{ \norm{\Hchat}_{H^{k-1}(\mathbb{B}_{\rhot_0})}, \norm{\Sigmah_{AB}}_{{H^{k-1}(\mathbb{B}_{\rhot_0})}} \Bigr\} \lesssim \delta_0,
        \end{gather*}
        with $\tilde{\rho}_0=\rho_0-\frac{\rho_1 t_0^{1-\ep_2}}{1-\ep_2}$, such that the components of $W$ satisfy the following estimates \footnote{See Section~\ref{order-nota} for the definition of order notation.}
        \begin{gather*}
            e_A^\Omega = \Ord_{H^{k-1}(\mathbb{B}_{\rhot_0})}(t^\zeta), \quad \alpha = t^{\ep_1+\frac{r_0}{2}+(n-1)\Hchat}\alphah \bigl(1+\Ord_{H^{k-1}(\mathbb{B}_{\rhot_0})}(t^{\zeta})\bigr), \quad C_{ABC} = \Ord_{H^{k-1}(\mathbb{B}_{\rhot_0})}(t^\zeta), \notag \\
            U_A = \Ord_{H^{k-1}(\mathbb{B}_{\rhot_0})}(t^\zeta), \quad \Hc = \Hchat + \Ord_{H^{k-1}(\mathbb{B}_{\rhot_0})}(t^\zeta), \quad \Sigma_{AB} = \Sigmah_{AB} + \Ord_{H^{k-1}(\mathbb{B}_{\rhot_0})}(t^\zeta),
        \end{gather*}
        for all $t\in(0,t_0]$. Furthermore, the explicit and implicit constants are independent of the choices of $\delta_0\in(0,\delta]$ and $t_0\in(0,T_0]$.

        \item The pair
        \begin{equation*}
            \Biggl\{ \gb = t^{\frac{2}{n-2}}\bigl(-\alphat^2dt\otimes dt + \gt_{\Sigma\Omega}dx^\Sigma \otimes dx^\Omega\bigr), \, \varphi = \sqrt{\frac{n-1}{2(n-2)}}\ln(t) \Biggr\},
        \end{equation*}
        where
        \begin{equation*}
            \alphat = t^{-\ep_1}\alpha, \quad (\gt_{\Sigma\Omega}) = (\delta^{AB}\et_A^\Sigma \et_B^\Omega)^{-1} \AND \et_A^\Omega = t^{-\ep_2}\alphat^{-1}e_A^\Omega
        \end{equation*}
        determines a classical solution to the Einstein-scalar field equations \eqref{ESF.1} to \eqref{ESF.2} on $\Omega_{\Icv}$. In addition, the trace of the second fundamental form $\Kttb_{AB}$ with respect to the physical metric $\gb_{ab}$ on the $t=constant$ hypersurface satisfies the following estimate 
        \begin{equation*}
            \Kttb_A{}^A(t,x) = \gttb^{AB}\Kttb_{AB}(t,x) \gtrsim \frac{1}{t^{\frac{n-1}{n-2}+\frac{r_0}{2}-(n-1)\delta_1}} 
        \end{equation*}
        for $(t,x)\in\Omega_{\Icv}$ when $t\in(0,t_0]$ is small enough. Here, when expressed relative to the frame $\{\et_0,\et_A\}$, $\gttb_{AB}=t^{\frac{2}{n-2}}\delta_{AB}$ denotes the spatial metric of $\gb_{ab}$. For an appropriate choice of $\delta_1$, we ensure that $\frac{n-1}{n-2}+\frac{r_0}{2}-(n-1)\delta_1>0$, which implies $\Kttb_A{}^A$ blows up uniformly as $t\searrow0$ and the hypersurface $t=0$ is a crushing singularity in the sense of the definition in \cite{Eardley:1979}.

        \item The physical solution $\{\gb,\varphi\}$ of the Einstein-scalar field equations on $\Omega_{\Icv}$ is past $C^2$-inextendible at $t=0$ and past timelike geodesically incomplete. The physical scalar curvature $\Rb$ and the invariant $\Rb_{ab}\Rb^{ab}$ satisfy the following estimates:
        \begin{equation*}
            \Rb(t,x) \lesssim -\frac{1}{t^{\frac{2(n-1)}{n-2}+r_0-2(n-1)\delta_1}} \AND \Rb_{ab}(t,x)\Rb^{ab}(t,x) \gtrsim -\frac{1}{t^{\frac{2(n-1)}{n-2}+r_0-2(n-1)\delta_1}},
        \end{equation*}
        for $(t,x)\in\Omega_{\Icv}$ when $t\in(0,t_0]$ is small enough. With the same choice of $\delta_1$ as in (b), we have $\frac{n-1}{n-2}+\frac{r_0}{2}-(n-1)\delta_1>0$. Consequently, $\Rb$ and $\Rb_{ab}\Rb^{ab}$ blow up uniformly as $t\searrow0$, thereby establishing the $C^2$-inextendibility of the physical metric $\gb$.

        \item The pair
        \begin{equation*}
            \Bigl\{ \gt = -\alphat^2dt\otimes dt + \gt_{\Sigma\Omega}dx^\Sigma \otimes dx^\Omega, \,\, \tau = t \Bigr\}
        \end{equation*}
        determines a classical solution to the conformal Einstein-scalar field equations \eqref{conf-ESF.1} to \eqref{conf-ESF.2} on $\Omega_{\Icv}$. The solution $\{\gt,\tau\}$ satisfies the AVTD property in the sense of \cite[\S 1.3]{BeyerOliynyk:2024b}. Moreover, the second fundamental form $\Kttt_{AB}$ of the conformal metric $\gt$ on the $t=constant$ hypersurfaces, when expressed relative to the frame $\{\et_0,\et_A\}$, satisfies that
        \begin{equation*}
            2t\alphat\Kttt_{AB} = \kf_{AB} + \Ord_{H^{k-1}(\mathbb{B}_{\rhot_0})}(t^\zeta),
        \end{equation*}
        where $\kf_{AB}=r_{AB}+2\Hchat\delta_{AB}+2\Sigmah_{AB}$. Furthermore, $\kf_{AB}$ satisfies
        \begin{equation*}
            \kf_A{}^A \geq 0 \AND (\kf_A{}^A)^2 - \kf_A{}^B\kf_B{}^A + 4\kf_A{}^A = 0,
        \end{equation*}
        implying that the spacetime $(\Omega_{\Icv},\gt,\tau)$ is asymptotically pointwise Kasner according to \cite[Def. 1.1]{BeyerOliynyk:2024a}.

        \item The geometric invariant, resulting from the Weyl tensor of the physical metric $\gb$, $\Cb_{A0B0}\Cb^{A0B0}$ satisfies that
        \begin{equation*}
            \inf_{x\in\mathbb{B}_{\rhot_0}}\Cb_{A0B0}\Cb^{A0B0} \gtrsim \frac{1}{t^{\frac{4(n-1)}{n-2}+2r_0-4\delta_1}}
        \end{equation*}
        provided there is at least one nonzero conformal Kasner exponent, when $t\in(0,t_0]\cap(0,1]$ and for the same choice of $\delta_1$ as in (b). Consequently, the invariant $\Cb_{A0B0}\Cb^{A0B0}$, as a component of the invariant $\Cb_{abcd}\Cb^{abcd}$, blows up uniformly as $t\searrow0$ if we are not perturbing around the FLRW metric defined in \eqref{FLRW-def}.
    \end{enumerate}
\end{thm}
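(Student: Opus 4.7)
The plan is to combine Proposition~\ref{Fuch-local} (Fuchsian global-in-time existence on $\Omega_{\Icv}$) with Proposition~\ref{local-local} (Lagrangian local-in-time existence with constraint propagation) and then read off the asymptotic/geometric conclusions from the decay estimates and the formulas in Section~\ref{physical-quant}. Given $W_0 \in H^k(\mathbb{B}_{\rho_0})$ satisfying the constraints, I would first build the augmented initial datum $\Wv_0$ (containing rescaled spatial derivatives $\partial^{\bc}W_0$ up to order $k$, see \eqref{Wv-def}) and choose $\delta$ small enough to trigger Proposition~\ref{Fuch-local}. This produces a unique $\Wv$ on $\Omega_{\Icv}$ with the energy/decay estimates and $\sup_{\Omega_{\Icv}}|e|\leq \rho_1/(6n^3)$. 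In parallel, use the same $W_0$ (together with the orthonormal frame initial data constructed via the Fermi--Walker procedure in Section~\ref{Tetrad-formalism}) to run Proposition~\ref{local-local} on $\Omega_{\Ic}$ for some short time $(t_1,t_0]$; this yields a classical frame solution $\Wt$ on which the constraints \eqref{A-cnstr-2}--\eqref{H-cnstr-2} propagate from $\{t_0\}\times\mathbb{B}_{\rho_0}$. Since the Fuchsian system is just \eqref{EEc.1}--\eqref{EEc.6} (equivalently \eqref{EEb.1}--\eqref{EEb.6} after rescaling) spatially differentiated and symmetrized by multiples of constraints, the first block of the Fuchsian $\Wv$ and the rescaled $\Wt$ satisfy the same symmetric hyperbolic problem; uniqueness on $\Omega_{\Ic}$ identifies them. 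The continuation principle in Proposition~\ref{local-local} together with the Fuchsian $L^\infty$ bounds on $W$ and the pointwise smallness of $e$ (which controls the weakly spacelike condition \eqref{Pb-con}) then lets me extend $\Wt$ all the way down to $t=0$, giving the classical solution $W$ of the Einstein--scalar field system on $\Omega_{\Icv}$ with the constraints holding throughout.

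For part~(a), the decay estimate in Proposition~\ref{Fuch-local} gives $\Pv\Wv\in\Ord(t^\zeta)$ in $H^{k-1}$, which translates to $e_A^\Omega,\,C_{ABC},\,U_A,\,\alpha^{-1}\Pv\alpha \in \Ord(t^\zeta)$, while $\Pv^\perp\Wv=(\Hc,\Sigma_{AB})$ converges in $H^{k-1}$ to limits $\Hchat,\Sigmah_{AB}$ with $\Pv^\perp(\Wv-\Wv(0))\in\Ord(t^\zeta)$. Symmetry and trace-freeness of $\Sigmah_{AB}$ pass to the limit from \eqref{EEc.4}. For $\alpha$, I would insert the expansion for $\Hc$ into \eqref{EEc.5}, write it as $\partial_t\ln\alpha = t^{-1}(\ep_1+r_0/2)+t^{-1}(n-1)\Hchat + t^{-1}\Ord(t^\zeta)$, integrate from $t$ to $t_0$ using Lemma~\ref{lem:asymptotic}, and exponentiate to obtain the stated form $\alpha=t^{\ep_1+r_0/2+(n-1)\Hchat}\alphah(1+\Ord(t^\zeta))$, with positivity and upper bound for $\alphah$ coming from the positivity of $\alpha|_{t_0}$ and the smallness of $\Hchat$.

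Parts~(b)--(e) are then essentially algebraic consequences of (a) together with the formulas in Section~\ref{physical-quant}. For (b), plug the expansion of $\alpha$, $\Hc$, $\Sigma_{AB}$ into \eqref{physi-sec-funda} and take the trace: the leading term is $(n-1)/(n-2)\cdot (\alpha t^{(n-3)/(n-2)-\ep_1})^{-1}$, giving the claimed blow-up with exponent $(n-1)/(n-2)+r_0/2-(n-1)\delta_1$, which is positive for $\delta_1$ small. For (c), use \eqref{physi-scalar} and \eqref{physi-Ricci} in the same way; $C^2$-inextendibility then follows from the standard argument that a $C^2$ extension would force these invariants to be continuous at $t=0$. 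Past timelike geodesic incompleteness follows from the integrability of the lapse $\alphat t^{1/(n-2)}$ as $t\searrow 0$ (the integral $\int_0^{t_0}\alphat\,t^{1/(n-2)}\,dt$ converges by the expansion of $\alpha$), bounding proper time of past-directed timelike curves. For (d), AVTD is the statement that the spatial derivative block $t^{-\ep_2}\Bv^\Lambda\partial_\Lambda\Wv$ is negligible in the Fuchsian limit, which is built into the very structure \eqref{Fuchsian-1}; asymptotic pointwise Kasner reduces to verifying $\kf_A{}^A\geq 0$ and the quadratic Kasner identity $(\kf_A{}^A)^2-\kf_A{}^B\kf_B{}^A+4\kf_A{}^A=0$ for $\kf_{AB}=r_{AB}+2\Hchat\delta_{AB}+2\Sigmah_{AB}$, which I obtain by passing to the limit $t\searrow 0$ in the rescaled Hamiltonian constraint \eqref{H-cnstr-3} (all $t^{1-\ep_2}e\cdot\partial C$, $UC$, $C^2$ terms vanish by (a)) and using \eqref{Kasner-rels-B}. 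For (e), I insert the limits into \eqref{CtA0B0}: the leading piece is a purely Kasner polynomial in $r_0,r_{AB}$ (plus small perturbations $O(\delta_1)$), and the Cauchy--Schwarz argument together with the Kasner relations shows $\delta^{AC}\delta^{BD}(\lim t^2\alphat^2\Ct_{A0B0})(\lim t^2\alphat^2\Ct_{C0D0})$ is bounded below by a positive constant whenever at least one $r_\Lambda\neq 0$; combined with \eqref{Weyl-invar-def} this yields the claimed lower bound.

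The main obstacle I anticipate is the identification step: showing that the Fuchsian solution produced by Proposition~\ref{Fuch-local} genuinely solves the Einstein--scalar field system with vanishing constraints, i.e.\ that adding multiples of the constraints in passing from \eqref{EEc.2}, \eqref{EEc.6} to \eqref{EEd.2}, \eqref{EEd.6} and introducing the symmetrizer $\Sc V$ does not spoil equivalence. This is handled by running Proposition~\ref{local-local} on the same initial data, where constraint propagation is automatic (since the Lagrangian formulation \eqref{Lag-1}--\eqref{Lag-8} is physically equivalent to the frame formulation by Section~\ref{gauges-veri}), and invoking uniqueness of the symmetric hyperbolic Cauchy problem on the domain of dependence determined by the weakly spacelike boundary $\Gamma_{\Icv}$. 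A secondary technical point is justifying the exponentiation of the $\Hchat$ term in $\alpha$ in the $H^{k-1}$ sense, for which I would use the Moser-type estimates valid for $k-1>(n-1)/2$.
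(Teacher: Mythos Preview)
Your proposal is correct and follows essentially the same route as the paper: build $\Wv_0$ from spatial derivatives of $W_0$, invoke Proposition~\ref{Fuch-local} for the Fuchsian solution, invoke Proposition~\ref{local-local} for a short-time classical solution with constraint propagation, identify the two by uniqueness, and continue to $t=0$ via the continuation principle and the uniform Fuchsian bounds; parts (a)--(e) then follow from the decay estimates and the formulas in Section~\ref{physical-quant}, with the Hamiltonian constraint \eqref{H-cnstr-3} in the limit giving the Kasner identity in (d).

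One small difference worth noting: for past timelike geodesic incompleteness in (c), you bound proper time directly by the integral $\int_0^{t_0}\alphat\,t^{1/(n-2)}\,dt$, which converges by the expansion of $\alpha$. The paper instead observes that the strong energy condition holds (since $\Rb_{ab}\xi^a\xi^b=2(\xi^a\nablab_a\varphi)^2\geq 0$) and appeals to Hawking's theorem via \cite[Thm.~9.5.1]{Wald:1994}, using the lower bound on the mean curvature from (b). Both arguments are valid; yours is more elementary and self-contained, while the paper's ties the conclusion directly to the crushing-singularity bound already established in (b). For (e), the paper carries out an explicit case analysis to show that the leading Kasner polynomial $\Cc_{AB}\Cc^{AB}$ vanishes iff all $r_A=0$; your sketch (``Cauchy--Schwarz together with the Kasner relations'') would need to be fleshed out along those lines, but the conclusion and mechanism are the same.
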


\subsection{Proof of Theorem~\ref{main-thm}}
We make a few statements before we start the proof so that the big picture is rather clear. First, in the following proof, $k_0$ denotes the order of regularity of the Fuchsian system, while $k_1$ represents the number of spatial derivatives that we need. Concerning the smallness condition on the initial data, at the beginning $\delta>0$ is arbitrary, but but will be successively restricted to smaller values throughout the argument. And the core idea of the proof is to manipulate Proposition~\ref{Fuch-local} and ~\ref{local-local} to get the global-in-time also local-in-space solution to the Einstein-scalar field equations. Subsequently, using the corresponding energy and decay estimates to derive estimates on the solution, which ultimately enables an analysis of the behavior of the physical quantities near $t=0$.

\bigskip

\noindent \underline{\textit{Global-in-time existence.}}
Given initial data $W_0$ as specified in \eqref{m-initial} that satisfies the constraint equations \eqref{A-cnstr-3} to \eqref{H-cnstr-3} on $\{t_0\}\times\mathbb{B}_{\rho_0}$ and $\norm{W_0}_{H^k(\mathbb{B}_{\rho_0})}<\delta_0$, we construct a new initial dataset by taking spatial derivatives of $W_0$. This yields
\begin{equation} \label{m-Wv0}
    \Wv_0 = \bigl( (W_{0\bc})_{|\bc|=0}, (W_{0\bc})_{|\bc|=1},..., (W_{0\bc})_{|\bc|=k_1-1},(\Wt_{0\bc})_{|\bc|=k_1} \bigr)^{\tr},
\end{equation}
where $W_{0\bc}=(t_0)^{|\bc|\nu}\partial^{\bc}W_0$ for $0\leq|\bc|<k_1$, $\Wt_{0\bc}=V^{-1}(t_0)^{|\bc|\nu}\partial^{\bc}W_0$ for $|\bc|=k_1$ and $V^{-1}$ is defined in \eqref{Vinvs-def}. It is clear then that $\Wv_0$ can be used as the initial data of the Fuchsian GIVP \eqref{Fuchsian-3} to \eqref{Fuchsian-4}, and there exists a constant $C_0>0$ that is independent of the choice of $\delta_0$, such that
\begin{equation*}
    \norm{\Wv_0}_{H^{k_0}(\mathbb{B}_{\rho_0})} < C_0\delta_0.
\end{equation*}
By Proposition~\ref{Fuch-local} and the smallness condition \eqref{smallness-2}, there exists a $\delta>0$ that is small enough, such that for every $\delta_0\in(0,\delta]$, there exists a unique solution $\Wv$ such that
\begin{equation*} \label{m-Wv-regu}
    \Wv(t) \in H^{k_0}(\mathbb{B}_{\rho(t)}) \AND \del{t}\Wv(t) \in H^{k_0-1}(\mathbb{B}_{\rho(t)}),
\end{equation*}
where $\rho(t)=\frac{\rho_1(t^{1-\ep_2}-t_0^{1-\ep_2})}{1-\ep_2}+\rho_0$, for all $t\in(0,t_0]$. Moreover, the limit $\lim_{t\searrow0}\Pv^{\perp}\Wv(t)$, denoted by $\Pv^{\perp}\Wv(0)$, exists in $H^{k_0-1}(\mathbb{B}_{\tilde{\rho}_0})$, with $\tilde{\rho}_0=\rho_0-\frac{\rho_1 t_0^{1-\ep_2}}{1-\ep_2}$, and there exists a $\zeta>0$ such that the solution $\Wv$ satisfies the energy estimate
\begin{equation} \label{m-energy}
    \norm{\Wv(t)}_{H^{k_0}(\mathbb{B}_{\rho(t)})}^2 + \int_t^{t_0} \frac{1}{s}\norm{\Pv\Wv(s)}_{H^{k_0}(\mathbb{B}_{\rho(s)})}^2 \, ds \lesssim \norm{\Wv_0}_{H^{k_0}(\mathbb{B}_{\rho_0})}^2
\end{equation}
and the decay estimate
\begin{equation} \label{m-decay}
    \norm{\Pv\Wv(t)}_{H^{k_0-1}(\mathbb{B}_{\rhot_0})} + \norm{\Pv^{\perp}\Wv(t)-\Pv^{\perp}\Wv(0)}_{H^{k_0-1}(\mathbb{B}_{\rhot_0})} \lesssim t^{\zeta}
\end{equation}
for all $t\in(0,t_0]$. Moreover, the very first component of $\Wv$, i.e. $e=(e_A^\Omega)$, satisfies the following estimate
\begin{equation} \label{m-e-bound}
    \sup_{(t,x)\in \Omega_{\Icv}} |e(t,x)| \leq \frac{\rho_1}{6n^3}.
\end{equation}
From \eqref{e-def} and \eqref{m-e-bound}, we see that in $W_0$, the component $e_A^\Omega(x)$ satisfies that
\begin{equation*}
    \sup_{x\in\mathbb{B}_{\rho_0}} |e(x)| = \sup_{x\in\mathbb{B}_{\rho_0}} t_0^{\ep_2}|\alphat(x)||\et(x)| < \frac{\rho_1}{(n-1)^{\frac{1}{4}}},
\end{equation*}
along with $\inf_{x\in\mathbb{B}_{\rho_0}}\alpha(x)>0$, $\inf_{x\in\mathbb{B}_{\rho_0}}\det(e_A^\Omega(x))>0$. Hence, by Proposition~\ref{local-local}, there exists a $t_1\in(0,t_0]$ and a unique classical solution
\begin{equation} \label{m-solution}
    W = (e_A^\Omega, \alpha, C_{ABC}, U_A, \Hc, \Sigma_{AB})^{\tr} \in C^1(\Omega_{\Icv'})
\end{equation}
with $\Icv'=(t_0,t_1,\rho_0,\rho_1,\ep_2)$, that satisfies the initial condition $W|_{t=t_0}=W_0$. This solution solves both the evolution equations \eqref{EEc.1} to \eqref{EEc.6} and \eqref{EEd.1} to \eqref{EEd.6}\footnote{The reason that we stress here the system \eqref{EEd.1} to \eqref{EEd.6} is that we construct the highest order term $(\Wt_{\bc})_{|\bc|=k_1}$ in $\Wv$ from it. Also, when the constraints vanish, the systems \eqref{EEc.1} to \eqref{EEc.6} and \eqref{EEd.1} to \eqref{EEd.6} are equivalent.}, guarantees that the constraints \eqref{A-cnstr-3} to \eqref{H-cnstr-3} vanish on $\Omega_{\Icv'}$ and exhibits the regularity $\sup_{t_1<t\leq t_0}\norm{\del{t}^i W(t)}_{H^{k-i}(\mathbb{B}_{\rho(t)})}<\infty$ for $i\in\{0, 1,...,k\}$. Also the solution $W$ can be extended to a larger time interval as long as
\begin{equation*}
    \sup_{t_1<t<t_0}\norm{W}_{W^{1,\infty}(\mathbb{B}_{\rho(t)})} < \infty \AND \sup_{x\in\Omega_{\Icv'}} |e(t,x)| < \frac{\rho_1}{(n-1)^{\frac{1}{4}}}.
\end{equation*}

Now, taking the spatial derivatives of $W$, we define
\begin{equation*}
    \check{\Wv} = \bigl( (W_{\bc})_{|\bc|=0}, (W_{\bc})_{|\bc|=1},..., (W_{\bc})_{|\bc|=k_1-1}, (\Wt_{\bc})_{|\bc|=k_1} \bigr)^{tr},
\end{equation*}
where $W_{\bc}=t^{|\bc|\nu}\partial^{\bc}W$ for $0\leq|\bc|<k_1$, $\Wt_{\bc}=V^{-1}t^{|\bc|\nu}\partial^{\bc}W$ for $|\bc|=k_1$ and $V^{-1}$ is defined in \eqref{Vinvs-def}. Repeating the procedure in Section~\ref{Fuch-form}, we find that $\check{\Wv}$ satisfies the Fuchsian system \eqref{Fuchsian-3}. Moreover, it is not difficult to verify that $\check{\Wv}|_{t=t_0}=\Wv_0$, see \eqref{m-Wv0}, then by uniqueness of the solution, we have that
\begin{equation*}
    \check{\Wv} = \Wv \quad \text{in} \quad \Omega_{\Icv'}.
\end{equation*}
This fact together with \eqref{m-energy}, \eqref{m-e-bound} and the Sobolev inequality \eqref{Sobolev}, further imply
\begin{equation*}
    \sup_{t_1<t<t_0}\norm{W}_{W^{1,\infty}(\mathbb{B}_{\rho(t)})} \leq C_1\delta_0 \AND \sup_{x\in\Omega_{\Icv'}} |e(t,x)| \leq \frac{\rho_1}{6n^3},
\end{equation*}
for some constant $C_1>0$. Consequently, the solution $W$ can be continued past $t_1$ and in fact, exists on the whole interval $(0,t_0]$, and so does $\check{\Wv}$. By uniqueness again, we have that $\check{\Wv}=\Wv$ in $\Omega_{\Icv}$, which in turn, by the energy estimate \eqref{m-energy}, implies that
\begin{equation} \label{m-W-bound}
    \norm{W(t)}_{H^k(\mathbb{B}_{\rho(t)})} \leq C_2 \delta_0
\end{equation}
for some constant $C_2>0$ and all $t\in(0,t_0]$.

\bigskip

\noindent \underline{\textit{Estimates for components in $W$.}}
Based on the decay estimate in \eqref{m-decay} and the uniform bound in \eqref{m-W-bound}, we obtain the following order estimates:
\begin{gather} \label{m-fields-decay}
    e_A^\Omega = \Ord_{H^{k-1}(\mathbb{B}_{\rhot_0})}(t^\zeta), \quad \alpha = \Ord_{H^{k-1}(\mathbb{B}_{\rhot_0})}(t^\zeta), \quad C_{ABC} = \Ord_{H^{k-1}(\mathbb{B}_{\rhot_0})}(t^\zeta), \notag \\
    U_A = \Ord_{H^{k-1}(\mathbb{B}_{\rhot_0})}(t^\zeta), \quad \Hc = \Hchat + \Ord_{H^{k-1}(\mathbb{B}_{\rhot_0})}(t^\zeta), \quad \Sigma_{AB} = \Sigmah_{AB} + \Ord_{H^{k-1}(\mathbb{B}_{\rhot_0})}(t^\zeta),
\end{gather}
where $\Hchat,\Sigmah_{AB}\in H^{k-1}(\mathbb{B}_{\rhot_0})$ and are bounded above by
\begin{equation} \label{m-Hchat-bd0}
    \max\Bigl\{ \norm{\Hchat}_{H^{k-1}(\mathbb{B}_{\rhot_0})}, \norm{\Sigmah_{AB}}_{{H^{k-1}(\mathbb{B}_{\rhot_0})}} \Bigr\} \lesssim \delta_0,
\end{equation}
for all $t\in(0,t_0]$. Furthermore, from \eqref{m-W-bound} and the Sobolev inequality \eqref{Sobolev}, it follows that
\begin{align*}
    \max\Bigl\{ \norm{\Hchat}_{L^\infty(\mathbb{B}_{\rhot_0})}, \norm{\Sigmah_{AB}}_{L^\infty(\mathbb{B}_{\rhot_0})}, \sup_{0<t\leq t_0}\norm{\Hc(t)}_{L^\infty(\mathbb{B}_{\rho(t)})}, \sup_{0<t\leq t_0}\norm{\Sigma_{AB}(t)}_{L^\infty(\mathbb{B}_{\rho(t)})} \Bigr\} \leq C_3 \delta_0
\end{align*}
for some constant $C_3>0$. Restricting $\delta_0\leq\frac{\delta_1}{C_3}$, for some $\delta_1\in(0,C_3\delta]$, yields estimate
\begin{align} \label{m-Hchat-bd}
    \max\Bigl\{ \norm{\Hchat}_{L^\infty(\mathbb{B}_{\rhot_0})}, \norm{\Sigmah_{AB}}_{L^\infty(\mathbb{B}_{\rhot_0})}, \sup_{0<t\leq t_0}\norm{\Hc(t)}_{L^\infty(\mathbb{B}_{\rho(t)})}, \sup_{0<t\leq t_0}\norm{\Sigma_{AB}(t)}_{L^\infty(\mathbb{B}_{\rho(t)})} \Bigr\} \leq \delta_1
\end{align}

To analyze the behavior of the physical curvature invariants, a more refined estimate for the lapse $\alpha$ is required. First we observe from \eqref{EEc.5} that the evolution equations for $\alpha$ is actually a homogeneous ODE. By uniqueness of the solution and the assumption that $\inf_{x\in\mathbb{B}_{\rho_0}}\alpha(x)>0$, we conclude that $\alpha>0$ on $\Omega_{\Icv}$. A direct calculation using \eqref{EEc.5} shows that
\begin{equation*}
    \del{t}\ln(\alpha t^{-\ep_1-\frac{r_0}{2}}) = \frac{n-1}{t}\Hc.
\end{equation*}
Applying \eqref{m-fields-decay}, we further compute
\begin{equation*}
    \del{t}\ln(\alpha t^{-\ep_1-\frac{r_0}{2}}\cdot t^{-(n-1)\Hchat}) = \frac{n-1}{t}(\Hc-\Hchat) = \Ord_{H^{k-1}(\mathbb{B}_{\rhot_0})}(t^{\zeta-1}).
\end{equation*}
Hence, by Lemma~\ref{lem:asymptotic}, there exists a $\check{\alpha}\in H^{k-1}(\mathbb{B}_{\rhot_0})$ such that
\begin{equation} \label{m-alpha-1}
    \ln(\alpha t^{-\ep_1-\frac{r_0}{2}-(n-1)\Hchat}) = \check{\alpha} + \Ord_{H^{k-1}(\mathbb{B}_{\rhot_0})}(t^{\zeta}).
\end{equation}
Setting $\alphah=\exp{(\check{\alpha})}$, it follows from the Sobolev and Moser inequality (equations \eqref{Sobolev} and \eqref{Moser}) that
\begin{equation} \label{m-alphah-bd}
    \alphah \in H^{k-1}(\mathbb{B}_{\rhot_0}) \AND 0 < \inf_{x\in\mathbb{B}_{\rhot_0}}\alphah(x) \leq \sup_{x\in\mathbb{B}_{\rhot_0}}\alphah(x) \lesssim 1.
\end{equation}
Now taking the exponential on both sides of \eqref{m-alpha-1} and applying the Sobolev and Moser inequality again (where we set $f(u)=e^u-1$), we derive
\begin{equation} \label{m-alpha-2}
    \alpha = t^{\ep_1+\frac{r_0}{2}+(n-1)\Hchat}\alphah \betah,
\end{equation}
where
\begin{equation*}
    \betah = 1+\Ord_{H^{k-1}(\mathbb{B}_{\rhot_0})}(t^{\zeta})
\end{equation*}
and
\begin{equation} \label{m-betah-bd}
    1 \lesssim \betah(t,x) \lesssim 1, \quad \forall (t,x)\in\Omega_{\Icv},
\end{equation}
with the implicit constants independent of the choices of $\delta_0$ and $t_0$.

For use below, we impose a further restriction on $\delta_1$ via
\begin{equation*}
    \delta_1 < \min\Biggl\{ C_3\delta,\, \frac{1}{n-2}+\frac{r_0}{2(n-1)} \Biggr\}.
\end{equation*}
By \eqref{m-Hchat-bd}, the new bound for $\delta_1$ guarantees that
\begin{align} \label{m-Hchat-bd-1}
    \inf_{x\in\mathbb{B}_{\rhot_0}}\frac{n-1}{n-2}+\frac{r_0}{2}+(n-1)\Hchat(x) &> \frac{n-1}{n-2}+\frac{r_0}{2}-(n-1)\delta_1 > 0 \notag \\
    \inf_{(t,x)\in\Omega_{\Icv}}\frac{n-1}{n-2}+\frac{r_0}{2}+(n-1)\Hc(t,x) &> \frac{n-1}{n-2}+\frac{r_0}{2}-(n-1)\delta_1 > 0.
\end{align}

\bigskip

\noindent \underline{\textit{Crushing singularity.}}
Observe from \eqref{EEc.1} that
\begin{equation*}
    \del{t}e_A^\Omega = \frac{1}{t}\Biggl( \biggl(\ep_2+\frac{r_0}{2}\biggr)\delta_A^B - \frac{1}{2}r_A^B + (n-2)\Hc\delta_A^B - \Sigma_A{}^B \Biggr) e_B^\Omega.
\end{equation*}
Using the fact that $\delta^{AB}\Sigma_{AB}=0$ and the Lemma A.2 in \cite{BeyerOliynyk:2024b}, we obtain
\begin{equation*}
    \det\bigl(e_A^\Omega(t,x)\bigr) = e^{-\int_t^{t_0}\frac{1}{s}((n-1)(\ep_2+\frac{r_0}{2})-\frac{r_0}{2}+(n-1)(n-2)\Hc(s))ds}\det\bigl(e_A^\Omega(x)\bigr)
\end{equation*}
for all $t\in(0,t_0]$. Then due to the assumption that $\inf_{x\in\mathbb{B}_{\rho_0}}\det(e_A^\Omega(x))>0$, it is clear that $\det(e_A^\Omega)>0$ on $\Omega_{\Icv}$. Moreover, from the previous argument, we know that $\alpha>0$ on $\Omega_{\Icv}$. Consequently, the solution \eqref{m-solution} defines a classical solution to the Einstein-scalar field equations \eqref{ESF.1} to \eqref{ESF.2} as
\begin{equation*}
    \Biggl\{ \gb = t^{\frac{2}{n-2}}\bigl(-\alphat^2dt\otimes dt + \gt_{\Sigma\Omega}dx^\Sigma \otimes dx^\Omega\bigr), \,\, \varphi = \sqrt{\frac{n-1}{2(n-2)}}\ln(t) \Biggr\},
\end{equation*}
where
\begin{equation*}
    \alphat = t^{-\ep_1}\alpha, \quad (\gt_{\Sigma\Omega}) = (\delta^{AB}\et_A^\Sigma \et_B^\Omega)^{-1} \AND \et_A^\Omega = t^{-\ep_2}\alphat^{-1}e_A^\Omega.
\end{equation*}
Now we see from \eqref{Kasner-rels-B}, \eqref{physi-metric} and \eqref{physi-sec-funda} that the trace of the physical second fundamental form $\Kttb_{AB}$ on the $t=constant$ hypersurfaces can be calculated by
\begin{equation*}
    \Kttb_A{}^A = \gttb^{AB}\Kttb_{AB} = \frac{1}{\alpha t^{\frac{n-1}{n-2}-\ep_1}}\Biggl(\frac{n-1}{n-2} + \frac{r_0}{2} + (n-1)\Hc\Biggr).
\end{equation*}
Substituting \eqref{m-alpha-2} into this expression yields that
\begin{equation*}
    \Kttb_A{}^A = \frac{1}{t^{\frac{n-1}{n-2}+\frac{r_0}{2}+(n-1)\Hchat}\alphah\betah}  \Biggl(\frac{n-1}{n-2} + \frac{r_0}{2} + (n-1)\Hc\Biggr).
\end{equation*}
Thus, from \eqref{m-alphah-bd}, \eqref{m-betah-bd} and \eqref{m-Hchat-bd-1}, for $(t,x)\in\Omega_{\Icv}$ with $t\in(0,t_0]\cap(0,1]$, we have
\begin{equation} \label{m-meancur-bd}
    \Kttb_A{}^A(t,x) \gtrsim \frac{1}{t^{\frac{n-1}{n-2}+\frac{r_0}{2}-(n-1)\delta_1}},
\end{equation}
which implies that $\Kttb_A{}^A$ blows up uniformly as $t\searrow0$. By definition, see \cite{Eardley:1979}, the hypersurface $t=0$ is a crushing singularity.

\bigskip

\noindent \underline{\textit{Past timelike geodesic incompleteness.}}
For the Einstein-scalar field equations, we can see from \eqref{ESF.1} that the physical scalar curvature $\Rb_{ab}$ satisfies the strong energy condition, namely
\begin{equation*}
    \Rb_{ab}\xi^a\xi^b = 2(\nablab_a\varphi)\xi^a (\nablab_b\varphi)\xi^b \geq 0,
\end{equation*}
for any timelike vector field $\xi^a$. Moreover, on the time interval $t\in(0,t_0]\cap(0,1]$, the hypersurface $\{t\}\times\mathbb{B}_{\rho(t)}$ is a spacelike Cauchy hypersurface in $\Omega_{\Icv}$. In view of the bound in \eqref{m-meancur-bd}, the trace of the second fundamental form $\Kttb_A{}^A$ is bounded below by some constant $\Cb>0$. Hence, by \cite[Thm. 9.5.1]{Wald:1994}, no past directed timelike curve originating from $\{t\}\times\mathbb{B}_{\rho(t)}$ can have a length greater than $3/\Cb$. In particular, all past directed timelike geodesics are incomplete.

\bigskip

\noindent \underline{\textit{Ricci and scalar curvature blowup.}}
From \eqref{physi-scalar}, \eqref{physi-Ricci} and \eqref{m-alpha-2}, the physical scalar curvature and the physical Ricci curvature invariants can be expressed as
\begin{equation*}
    \Rb = -\frac{n-1}{(n-2)\alphah^2\betah^2 t^{\frac{2(n-1)}{n-2}+r_0+2(n-1)\Hchat}} \AND \Rb_{ab}\Rb^{ab} = \frac{(n-1)^2}{(n-2)^2\alphah^4\betah^4 t^{\frac{4(n-1)}{n-2}+2r_0+4(n-1)\Hchat}}.
\end{equation*}
By \eqref{m-alphah-bd}, \eqref{m-betah-bd} and \eqref{m-Hchat-bd-1}, $\alphah$ and $\betah$ are positive and bounded and the power of $t$ in the above expressions remains positive. Hence, for $(t,x)\in\Omega_{\Icv}$ with $t\in(0,t_0]\cap(0,1]$,
\begin{equation*}
    \Rb(t,x) \lesssim -\frac{1}{t^{\frac{2(n-1)}{n-2}+r_0-2(n-1)\delta_1}} \AND \Rb_{ab}(t,x)\Rb^{ab}(t,x) \gtrsim -\frac{1}{t^{\frac{2(n-1)}{n-2}+r_0-2(n-1)\delta_1}}.
\end{equation*}
This implies that the physical curvature invariants $\Rb$ and $\Rb_{ab}\Rb^{ab}$ blow up uniformly as $t\searrow0$. The $C^2$-inextendibility of the physical metric $\gb$ then follows directly.

\bigskip

\noindent \underline{\textit{AVTD and asymptotic pointwise Kasner behavior.}}
It is clear from the frame formalism in Section~\ref{Tetrad-formalism} and Proposition~\ref{Fuch-local}, the pair
\begin{equation*}
    \Bigl\{ \gt = -\alphat^2dt\otimes dt + \gt_{\Sigma\Omega}dx^\Sigma \otimes dx^\Omega, \,\, \tau = t \Bigr\}
\end{equation*}
determines a classical solution to the conformal Einstein-scalar field equations \eqref{conf-ESF.1} to \eqref{conf-ESF.2} on $\Omega_{\Icv}$.

From the energy estimate \eqref{m-energy}, the assumption on $\ep_2$ in \eqref{ep-con-1} and the Fuchsian equation \eqref{Fuchsian-3}, we obtain
\begin{equation*}
    \int_0^{t_0} \norm{s^{-\ep_2}\Bv^\Lambda(\Wv)\del{\Lambda}\Wv}_{H^{k_0-1}(\rho(s))} \, ds < \infty.
\end{equation*}
This implies that the spatial derivative terms are negligible near $t=0$, thereby establishing the AVTD property of the solution $\{\gt,\tau\}$ to the conformal Einstein equations, as defined in \cite[\S 1.3]{BeyerOliynyk:2024b}.

Now from \eqref{conf-sec-fund} and \eqref{m-fields-decay}, the second fundamental form $\Kttt_{AB}$ of the metric $\gt$ on the $t=constant$ hypersurfaces, expressed relative to the frame $\{\et_0,\et_A\}$, satisfies that
\begin{equation*}
    2t\alphat\Kttt_{AB} = r_{AB} + 2\Hchat\delta_{AB} + 2\Sigmah_{AB} + \Ord_{H^{k-1}(\mathbb{B}_{\rhot_0})}(t^\zeta).
\end{equation*}
Set
\begin{equation} \label{kf-def}
    \kf_{AB} = r_{AB} + 2\Hchat\delta_{AB} + 2\Sigmah_{AB}.
\end{equation}
Then, from the Sobolev inequality \eqref{Sobolev}, it follows that
\begin{equation*}
    \sup_{x\in\mathbb{B}_{\rhot_0}}\lim_{t\searrow0} |2t\alphat\Kttt_{AB} - \kf_{AB}| = 0.
\end{equation*}
Using \eqref{Kasner-rels-B} and the fact that $\delta^{AB}\Sigmah_{AB}=0$, we see
\begin{equation*}
    (\kf_A{}^A)^2 - \kf_A{}^B\kf_B{}^A + 4\kf_A{}^A = 4(n-1)(n-2)\Hchat^2 + 4(n-2)r_0\Hchat + 8(n-1)\Hchat - 4r_{AB}\Sigmah^{AB} - 4\Sigmah_{AB}\Sigmah^{AB}.
\end{equation*}
Moreover, from the constraint \eqref{H-cnstr-3}, we observe
\begin{align*}
     &\quad (n-1)(n-2)\Hc^2 + (n-2)r_0\Hc + 2(n-1)\Hc - r_{AB}\Sigma^{AB} - \Sigma_{AB}\Sigma^{AB} \notag \\
        &= - 2t^{1-\ep_2}e_A(C^A{}_B{}^B) + 2U_A C^A{}_B{}^B + C_{AB}{}^B C^A{}_C{}^C + \frac{1}{4}C_{ABC}(C^{ABC}+C^{BAC}+C^{ACB}).
\end{align*}
Applying \eqref{m-W-bound}, \eqref{m-fields-decay} and the calculus inequalities \eqref{Sobolev} to \eqref{PC.3}, it is straightforward to verify that
\begin{equation*}
    (\kf_A{}^A)^2 - \kf_A{}^B\kf_B{}^A + 4\kf_A{}^A = \Ord_{H^{k-1}(\mathbb{B}_{\rhot_0})}(t^{\zeta}),
\end{equation*}
for $t\in(0,t_0]\cap(0,1]$. Letting $t\searrow0$, we conclude that
\begin{equation} \label{Asym-Kasner}
    (\kf_A{}^A)^2 - \kf_A{}^B\kf_B{}^A + 4\kf_A{}^A = 0.
\end{equation}
Solving for $\kf_A{}^A$ in \eqref{Asym-Kasner} yields that
\begin{equation*}
    \kf_A{}^A = -2 \pm \sqrt{4+\kf_{AB}\kf^{AB}}.
\end{equation*}
Note that from \eqref{m-Hchat-bd-1} and \eqref{kf-def}, we deduce
\begin{equation*}
    \kf_A{}^A = r_0 + 2(n-1)\Hchat > -\frac{2(n-1)}{n-2},
\end{equation*}
and hence,
\begin{equation*}
    \kf_A{}^A = -2 + \sqrt{4+\kf_{AB}\kf^{AB}} \geq 0.
\end{equation*}
In summary, we prove that the spacetime $(\Omega_{\Icv},\gt,\tau)$ is asymptotically pointwise Kasner, see definition in \cite[Def. 1.1]{BeyerOliynyk:2024a}.

\bigskip

\noindent \underline{\textit{Blow up of the components of the Weyl tensor.}}
According to \eqref{CtA0B0}, \eqref{m-fields-decay}, \eqref{m-Hchat-bd} and the calculus inequalities \eqref{Sobolev} to \eqref{PC.3}, we see that
\begin{equation*}
    t^2\alphat^2\Ct_{A0B0} =  \Cc_{AB} + \Csc_{AB} + \Ord_{H^{k-1}(\mathbb{B}(\rhot_0))}(t^{\zeta}),
\end{equation*}
where
\begin{align}
    \Cc_{AB} &= \frac{n-3}{2(n-2)}r_{AB} + \frac{r_0}{2(n-2)}\delta_{AB} + \frac{r_0}{4}r_{AB} - \frac{1}{4}r_{AC}r_B^C, \label{CcAB-def} \\
    \Csc_{AB} &= \frac{n-3}{n-2}\Sigmah_{AB} + (n-3)\Hchat\Sigmah_{AB} + \Biggl(\frac{n-3}{2}r_{AB} - \frac{(n-3)r_0}{2(n-1)}\delta_{AB}\Biggr)\Hchat + \frac{(n-3)r_0}{2(n-1)}\Sigmah_{AB} \notag \\
        &\quad + \frac{1}{n-1}\Sigmah_{CD}\Sigmah^{CD}\delta_{AB} + \frac{1}{n-1}r_{CD}\Sigmah^{CD}\delta_{AB} - \Sigmah_{AC}\Sigmah_B{}^C - \Sigmah_{C(A}r_{B)}^C + \frac{r_0}{n-1}\Sigmah_{AB}. \notag
\end{align}
Since the exponents $r_0$, $r_A$ and $\Hc$, $\Hchat$, $\Sigma_{AB}$ and $\Sigmah_{AB}$ are bounded due to \eqref{m-W-bound} and \eqref{m-Hchat-bd}, it follows from \eqref{Weyl-invar-def} and \eqref{m-alpha-2} that
\begin{equation} \label{Weyl-fix}
    \Cb_{A0B0}\Cb^{A0B0} = \frac{1}{\alphah^4\betah^4 t^{\frac{4(n-1)}{n-2}+2r_0+4(n-1)\Hchat}}\Bigl((\Cc_{AB}+\Csc_{AB})(\Cc^{AB}+\Csc^{AB}) + \Ord_{H^{k-1}(\mathbb{B}(\rhot_0))}(t^{\zeta})\Bigr)
\end{equation}
By \eqref{CcAB-def} and the definition of $r_{AB}$ in \eqref{rAB-def-1} and \eqref{rAB-def-2}, we derive that
\begin{equation*}
    \Cc_{AB}\Cc^{AB} = \sum_{A=1}^{n-1}\Biggl( \frac{n-3}{2(n-2)}r_A + \frac{r_0}{2(n-2)} + \frac{r_0}{4}r_A - \frac{1}{4}r_A^2 \Biggr)^2.
\end{equation*}
It is clear that $\Cc_{AB}\Cc^{AB}\geq0$. Moreover, in the case of an FLRW metric, that is, when all conformal exponents vanish (see \eqref{FLRW-def}), we have $\Cc_{AB}\Cc^{AB}=0$.

Now we claim that $\Cc_{AB}\Cc^{AB}=0$ if and only if $r_A=0$ for $A=1,2,...,n-1$ and we prove this statement by contradiction. Suppose there is at least one nonzero conformal Kasner exponent (so $r_0>0$), and yet we have $\Cc_{AB}\Cc^{AB}=0$, then two cases follow: either there is at least one zero conformal Kasner exponent $r_C$, or all the exponents are nonzero. In the first case, for any index $C$ with $r_C=0$, we have
\begin{equation*}
    \frac{n-3}{2(n-2)}r_C + \frac{r_0}{2(n-2)} + \frac{r_0}{4}r_C - \frac{1}{4}r_C^2 = \frac{r_0}{2(n-2)} \neq 0,
\end{equation*}
due to $r_0>0$. This implies $\Cc_{AB}\Cc^{AB}>0$, a contradiction. In the second case, we must have
\begin{equation} \label{rA-contradiction}
    \frac{n-3}{2(n-2)}r_A + \frac{r_0}{2(n-2)} + \frac{r_0}{4}r_A - \frac{1}{4}r_A^2 = 0,
\end{equation}
for $A=1,2,...,n-1$. Solving for $r_A$ in \eqref{rA-contradiction} yields
\begin{equation*}
    r_A = \frac{r_0}{2} + \frac{n-3}{n-2} \pm \sqrt{\frac{1}{4}r_0^2 + \frac{n-1}{n-2}r_0 + \frac{(n-3)^2}{(n-2)^2}}.
\end{equation*}
Consequently, summing over $A$ and using the relation $\sum_{A=1}^{n-1}=r_0$, we derive
\begin{align*}
    &\, \sum_{A=1}^{n-1}r_A = \frac{n-1}{2}r_0 + \frac{(n-1)(n-3)}{n-2} \pm (n-1)\sqrt{\frac{1}{4}r_0^2 + \frac{n-1}{n-2}r_0 + \frac{(n-3)^2}{(n-2)^2}} = r_0, \\
    \Longleftrightarrow& \, \frac{n-3}{2}r_0 + \frac{(n-1)(n-3)}{n-2} = \mp (n-1)\sqrt{\frac{1}{4}r_0^2 + \frac{n-1}{n-2}r_0 + \frac{(n-3)^2}{(n-2)^2}}, \\
    \Longleftrightarrow& \, \frac{(n-3)^2}{4}r_0^2 + \frac{(n-1)(n-3)^2}{n-2}r_0 = \frac{(n-1)^2}{4}r_0^2 + \frac{(n-1)^3}{n-2}r_0.
\end{align*}
The only solution for the resulting equation is $r_0=0$, again a contradiction. Hence, the claim is proved.

Assume now that we have at least on nonzero conformal Kasner exponent, so that $\Cc_{AB}\Cc^{AB}>0$. By further restricting $\delta_0$ and $\delta_1$, we can ensure from \eqref{m-W-bound}, \eqref{m-Hchat-bd0} and \eqref{m-Hchat-bd} that
\begin{equation*}
    \inf_{x\in\mathbb{B}_{\rhot_0}}(\Cc_{AB}+\Csc_{AB})(\Cc^{AB}+\Csc^{AB}) > 0,
\end{equation*}
where we have used the calculus inequalities \eqref{Sobolev} to \eqref{PC.3}. As a result, from \eqref{m-alphah-bd}, \eqref{m-betah-bd}, \eqref{m-Hchat-bd-1} and \eqref{Weyl-fix}, we have
\begin{equation*}
    \inf_{x\in\mathbb{B}_{\rhot_0}}\Cb_{A0B0}\Cb^{A0B0} \gtrsim \frac{1}{t^{\frac{4(n-1)}{n-2}+2r_0-4\delta_1}}
\end{equation*}
for $t\in(0,t_0]\cap(0,1]$. Since $\frac{4(n-1)}{n-2}+2r_0-4\delta_1>0$, the invariant $\Cb_{A0B0}\Cb^{A0B0}$, as a component of the invariant $\Cb_{abcd}\Cb^{abcd}$, blows up uniformly as $t\searrow0$ as long as there is at least one nonzero conformal Kasner exponent.

\begin{rem} \label{main-thm-v2}
    Theorem~\ref{main-thm} can be modified to establish the past stability of the Kasner-scalar field metrics on $M_{0,t_0}$, i.e. a global-in-space result; see also \cite[Thm. 6.1]{BOZ:2025}. The modification involves replacing the sets $\mathbb{B}_{\rho_0}$, $\mathbb{B}_{\rho(t)}$ and $\mathbb{B}_{\rhot_0}$ with $\Tbb^{n-1}$, and substituting $\Omega_{\Icv}$ by $M_{0,t_0}$. The proof follows essentially the same argument, except that we use Proposition~\ref{global-local} instead of Proposition~\ref{local-local} to obtain the local-in-time existence on $M_{t_1,t_0}$, and Proposition~\ref{Fuch-global} instead of Proposition~\ref{Fuch-local} to derive the global-in-time existence and corresponding estimates.
\end{rem}

\appendix

\section{Matrix Adjoint and Matrix Identities} \label{Madj-Miden}
In this appendix, we derive the adjoint of $\delta_{[A}^D\delta_{C]}^{\langle P}\delta_B^{Q\rangle}$ and $\delta^{D\langle P}\delta_{[A}^{Q\rangle}\delta_{C]B}$. Additionally, several identities useful for the symmetrization procedure in Section~\ref{Symmetrization} are listed. These identities are stated without proof, as they follow from straightforward calculations.

\subsection{Adjoint of $\delta_{[A}^D\delta_{C]}^{\langle P}\delta_B^{Q\rangle}$ and $\delta^{D\langle P}\delta_{[A}^{Q\rangle}\delta_{C]B}$}
For a fixed index $D$, we define the matrices $M_3$ and $M_4$ to be
\begin{equation*}
    (M_3)^D{}_{ABC}{}^{PQ} = \delta_{[A}^D\delta_{C]}^{\langle P}\delta_B^{Q\rangle} \AND (M_4)^D{}_{ABC}{}^{PQ} = \delta^{D\langle P}\delta_{[A}^{Q\rangle}\delta_{C]B}.
\end{equation*}
Applying the index conventions established in Section~\ref{Index}, the expansion of $M_3$ yields
\begin{align*}
    (M_3)^{DABCPQ} &= \frac{1}{4}\delta^{AD}\delta^{CP}\delta^{BQ} + \frac{1}{4}\delta^{AD}\delta^{CQ}\delta^{BP} - \frac{1}{2(n-1)}\delta^{PQ}\delta^{AD}\delta^{CE}\delta_E^B \notag \\
        &\quad - \frac{1}{4}\delta^{CD}\delta^{AP}\delta^{BQ} - \frac{1}{4}\delta^{CD}\delta^{AQ}\delta^{BP} + \frac{1}{2(n-1)}\delta^{PQ}\delta^{CD}\delta^{AE}\delta_E^B.
\end{align*}
Renaming the indices gives
\begin{align*}
    (M_3)^{DPQRAB} &= \frac{1}{4}\delta^{PD}\delta^{RA}\delta^{QB} + \frac{1}{4}\delta^{PD}\delta^{RB}\delta^{QA} - \frac{1}{2(n-1)}\delta^{AB}\delta^{PD}\delta^{RE}\delta_E^Q \notag \\
        &\quad - \frac{1}{4}\delta^{RD}\delta^{PA}\delta^{QB} - \frac{1}{4}\delta^{RD}\delta^{PB}\delta^{QA} + \frac{1}{2(n-1)}\delta^{AB}\delta^{RD}\delta^{PE}\delta_E^Q,
\end{align*}
from which it follows that the adjoint of $(M_3)^D{}_{ABC}{}^{PQ}$ is
\begin{align} \label{M3-adj}
    (M_3)^{DPQR}{}_{AB} &= \delta^{D[P}\delta_{\langle A}^{R]}\delta_{B\rangle}^Q.
\end{align}
A similar calculation shows that the adjoint of $(M_4)^D{}_{ABC}{}^{PQ}$ is
\begin{equation} \label{M4-adj}
    (M_4)^{DPQR}{}_{AB} = \delta_{\langle A}^D\delta_{B\rangle}^{[P}\delta^{R]Q}.
\end{equation}

\subsection{Matrix identities.}
The following matrix identities are provided for reference:
\begin{align}
    \delta_A^{[E}\delta^{G]F}\delta_{[E}^P\delta_{G]F} &= \frac{n-2}{2}\delta_A^P, \label{dd-1} \\
    \delta_{[A}^E\delta_{C]B}\delta_E^{[P}\delta^{R]Q} &= \frac{1}{4}(\delta_A^P\delta_{BC}\delta^{QR} - \delta_A^R\delta_{BC}\delta^{PQ} - \delta_C^P\delta_{AB}\delta^{QR} + \delta_C^R\delta_{AB}\delta^{PQ}), \label{dd-2} \\
    \delta_A^{[I}\delta^{K]J}(M_3)^D{}_{IJK}{}^{PQ} &= -\frac{1}{2}\delta^{D\langle P}\delta_A^{Q\rangle}, \label{d-pi1} \\
    \delta_A^{[I}\delta^{K]J}(M_4)^D{}_{IJK}{}^{PQ} &= \frac{n-2}{2}\delta^{D\langle P}\delta_A^{Q\rangle}, \label{d-pi2} \\
    \delta_{[A}^E\delta_{C]B}\delta_E^{[I}\delta^{K]J}(M_3)^D{}_{IJK}{}^{PQ} &= -\frac{1}{2}(M_4)^D{}_{ABC}{}^{PQ}, \label{dd-pi1} \\
    \delta_{[A}^E\delta_{C]B}\delta_E^{[I}\delta^{K]J}(M_4)^D{}_{IJK}{}^{PQ} &= \frac{n-2}{2}(M_4)^D{}_{ABC}{}^{PQ}. \label{dd-pi2}
\end{align}

\section{Positive Definiteness of Matrices}
\begin{lem} \label{Mc-PD}
    Consider a symmetric matrix $(\Mc_{ABC}^{PQR})$ of the form $a\delta_A^P\delta_B^Q\delta_C^R+b\delta_{[A}^E\delta_{C]B}\delta_E^{[P}\delta^{R]Q}$, where $a,b\in\Rbb$, with indices ranging from $1$ to $n-1$ and adopting the convention in \eqref{index-op} for the square brackets. Then, if $a$ and $b$ satisfy the inequality $a>\frac{|b|}{2}$\footnote{Here we only care about the sufficient condition for this matrix to be positive definite.}, the matrix $(\Mc_{ABC}^{PQR})$ is positive definite.
\end{lem}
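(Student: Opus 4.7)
The plan is to establish positive definiteness of $\Mc$ by computing the associated quadratic form $Q(v) := v^{ABC}\Mc_{ABC}^{PQR}v_{PQR}$ for arbitrary nonzero $v = (v_{ABC})$ and bounding its sign. Writing $\Mc_{ABC}^{PQR} = a\,\delta_A^P\delta_B^Q\delta_C^R + b\,N_{ABC}^{PQR}$ with $N_{ABC}^{PQR} := \delta_{[A}^E\delta_{C]B}\delta_E^{[P}\delta^{R]Q}$, I would first expand $N$ using identity~\eqref{dd-2}, and introduce the two trace vectors
\begin{equation*}
T^{(1)}_A := \sum_B v_{ABB}, \qquad T^{(2)}_A := \sum_B v_{BBA}.
\end{equation*}
A term-by-term contraction then collapses to the pleasant identity
\begin{equation*}
\langle v, N v\rangle = \tfrac{1}{4}\bigl(|T^{(1)}|^2 - 2\,T^{(1)}\!\cdot T^{(2)} + |T^{(2)}|^2\bigr) = \tfrac{1}{4}\,|T^{(1)} - T^{(2)}|^2 \ge 0,
\end{equation*}
which shows that the quadratic form of $N$ is manifestly non-negative and exposes its low-rank structure: $N = \tfrac{1}{4}L^{\tr}L$, where $L \colon v \mapsto T^{(1)} - T^{(2)}$ is a linear map from $\Rbb^{(n-1)^3}$ to $\Rbb^{n-1}$.

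With this identity in hand the argument splits into two cases. If $b \ge 0$, then $Q(v) = a|v|^2 + b\langle v, N v\rangle \ge a|v|^2 > 0$ for all nonzero $v$, and positive definiteness of $\Mc$ follows immediately from $a > |b|/2 \ge 0$. If $b < 0$, I would need an upper bound on $\langle v, N v\rangle$ in terms of $|v|^2$, which amounts to computing the largest eigenvalue of $N$. Since $L^{\tr}L$ and $LL^{\tr}$ share their nonzero spectrum and the latter is only an $(n-1)\times(n-1)$ matrix, I would calculate $LL^{\tr}$ directly in the Kronecker-delta basis: the resulting tensor contractions collapse to a scalar multiple of the identity on $\Rbb^{n-1}$, pinning down $\lambda_{\max}(N)$ and converting the positive-definiteness criterion into the concrete inequality $a + b\,\lambda_{\max}(N) > 0$.

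The main obstacle is matching the particular constant $1/2$ asserted in the hypothesis. A crude Cauchy--Schwarz estimate only yields $\langle v, N v\rangle \le (n-1)|v|^2$, which is too loose; the sharp route through explicit diagonalization of $LL^{\tr}$ outlined above is therefore the technical heart of the argument, and a careful tracking of the combinatorial coefficients arising in the four-term expansion of \eqref{dd-2} is where the bookkeeping becomes delicate. Once the top eigenvalue of $N$ has been isolated, the remaining identities collected in Appendix~\ref{Madj-Miden} reduce everything to routine verification. Since the footnote to the statement clarifies that only a \emph{sufficient} condition is needed, any bound obtained in this way will suffice for the downstream applications in \eqref{Btt0-pos-2b} and \eqref{M*-invs}.
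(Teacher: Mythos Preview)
Your spectral approach through $N = \tfrac14 L^{\tr}L$ is correct and cleaner than the paper's, and both routes start from the same identity $\langle v,Nv\rangle = \tfrac14|T^{(1)}-T^{(2)}|^2$. Carrying out the computation you outline gives $(LL^{\tr})_{AC} = \sum_{P,Q,R}(\delta^A_P\delta_{QR}-\delta_{PQ}\delta^A_R)(\delta^C_P\delta_{QR}-\delta_{PQ}\delta^C_R) = 2(n-2)\,\delta^{AC}$, hence $\lambda_{\max}(N) = (n-2)/2$ and the sharp condition for positive definiteness when $b<0$ is $a>\tfrac{n-2}{2}|b|$. You will not recover the constant $\tfrac12$ asserted in the hypothesis, because for $n\ge4$ the lemma as stated is false: the vector with $v_{1BB}=1$, $v_{BB1}=-1$ for $B=2,\dots,n-1$ and all other entries zero has $|v|^2=2(n-2)$ and $\langle v,Nv\rangle=(n-2)^2$, so with $a=1$, $b=-2/(n-2)$ one has $a>|b|/2$ yet $Q(v)=0$.

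The paper's argument diverges from yours after the common identity: it expands $(T^{(1)}_A-T^{(2)}_A)^2$ into a ``diagonal'' part $\sum_{B\ne A}\bigl((v_{ABB})^2+(v_{BBA})^2\bigr)$ plus ``cross'' terms and then invokes \eqref{PDM-3} to bound the absolute value of the cross terms by the diagonal. But nonnegativity of the square only yields $(\text{diag})\ge -(\text{cross})$, not the two-sided bound; in the counterexample above, $\text{diag}=2(n-2)$ while $\text{cross}=4(n-2)^2-2(n-2)$, which exceeds the diagonal for every $n\ge4$. Fortunately the downstream applications survive with your corrected constant: under \eqref{param-fix}, the condition $a>\tfrac{n-2}{2}|b|$ turns \eqref{Btt0-pos-2b} into $(2n-7)^2>0$, and for \eqref{M*-invs} only invertibility of $M_*$ is actually required, which holds since its eigenvalues are $n-1$ and $(n-1)(7-2n)/3$, both nonzero for integer $n\ge4$.
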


\begin{proof}
    First, observe that for any vector $v_{PQR}$, a direct calculation yields
    \begin{equation} \label{PDM-1}
        v_{PQR}\Bigl(\delta_{[A}^E\delta_{C]B}\delta_E^{[P}\delta^{R]Q}\Bigr)v^{ABC} = \frac{1}{4}\sum_{A=1}^{n-1}\Biggl( \Bigl(\sum_{B=1}^{n-1}v_{ABB}\Bigr) - \Bigl(\sum_{B=1}^{n-1}v_{BBA}\Bigr) \Biggr)^2 \geq 0,
    \end{equation}
    where we use the identity \eqref{dd-2}. By expanding the bracket on the right-hand side of \eqref{PDM-1}, we obtain
    \begin{align}
        \Biggl( \Bigl(\sum_{B=1}^{n-1}v_{ABB}\Bigr) - \Bigl(\sum_{B=1}^{n-1}v_{BBA}\Bigr) \Biggr)^2 &= \sum\limits_{\substack{B=1 \\ B\neq A}}^{n-1}(v_{ABB})^2 + \sum\limits_{\substack{B=1 \\ B\neq A}}^{n-1}(v_{BBA})^2 + \sum_{\text{\scalebox{0.5}{$A\neq B\neq C\neq A$}}}2v_{ABB}v_{ACC} \notag \\
            &\quad + \sum_{\text{\scalebox{0.5}{$A\neq B\neq C\neq A$}}}2v_{BBA}v_{CCA} - 2\sum_{\text{\scalebox{0.5}{$B,C\neq A$}}}2v_{ABB}v_{CCA} \geq 0. \label{PDM-2}
    \end{align}
    A straightforward deduction from \eqref{PDM-2}, which will be utilized subsequently, is that
    \begin{equation} \label{PDM-3}
        \sum\limits_{\substack{B=1 \\ B\neq A}}^{n-1}(v_{ABB})^2 + \sum\limits_{\substack{B=1 \\ B\neq A}}^{n-1}(v_{BBA})^2 \geq \Biggl| \sum_{\text{\scalebox{0.5}{$A\neq B\neq C\neq A$}}}2v_{ABB}v_{ACC} + \sum_{\text{\scalebox{0.5}{$A\neq B\neq C\neq A$}}}2v_{BBA}v_{CCA} - 2\sum_{\text{\scalebox{0.5}{$B,C\neq A$}}}2v_{ABB}v_{CCA} \Biggr|.
    \end{equation}
    Consequently, if $a>\frac{|b|}{2}$, then for any nonzero vector $v_{ABC}$, it follows that
    \begin{align*}
        v_{PQR}\,\Mc_{ABC}^{PQR}\,v^{ABC} &> \frac{|b|}{2}\sum_{A,B,C=1}^{n-1}(v_{ABC})^2 + \frac{b}{4}\sum_{A=1}^{n-1}\Biggl( \sum\limits_{\substack{B=1 \\ B\neq A}}^{n-1}(v_{ABB})^2 + \sum\limits_{\substack{B=1 \\ B\neq A}}^{n-1}(v_{BBA})^2 \notag \\
        &\quad + \sum_{\text{\scalebox{0.5}{$A\neq B\neq C\neq A$}}}2v_{ABB}v_{ACC} + \sum_{\text{\scalebox{0.5}{$A\neq B\neq C\neq A$}}}2v_{BBA}v_{CCA} - 2\sum_{\text{\scalebox{0.5}{$B,C\neq A$}}}2v_{ABB}v_{CCA} \Biggr) \notag \\
        &\geq \frac{|b|}{4}\sum_{A=1}^{n-1}\Biggl( \sum\limits_{\substack{B=1 \\ B\neq A}}^{n-1}(v_{ABB})^2 + \sum\limits_{\substack{B=1 \\ B\neq A}}^{n-1}(v_{BBA})^2 \Biggr) \notag \\
        &\quad + \frac{b}{4}\sum_{A=1}^{n-1}\Biggl( \sum_{\text{\scalebox{0.5}{$A\neq B\neq C\neq A$}}}2v_{ABB}v_{ACC} + \sum_{\text{\scalebox{0.5}{$A\neq B\neq C\neq A$}}}2v_{BBA}v_{CCA} - 2\sum_{\text{\scalebox{0.5}{$B,C\neq A$}}}2v_{ABB}v_{CCA} \Biggr) \notag \\
        &\geq 0,
    \end{align*}
    where the second inequality employs the fact that
    \begin{align*}
        &\quad \frac{|b|}{2}\sum_{A,B,C=1}^{n-1}(v_{ABC})^2 + \frac{b}{4}\sum_{A=1}^{n-1}\Biggl( \sum\limits_{\substack{B=1 \\ B\neq A}}^{n-1}(v_{ABB})^2 + \sum\limits_{\substack{B=1 \\ B\neq A}}^{n-1}(v_{BBA})^2 \Biggr) \\
        &\geq \Biggl(\frac{|b|}{2}+\frac{b}{4}\Biggr)\sum_{A=1}^{n-1} \Biggl( \sum\limits_{\substack{B=1 \\ B\neq A}}^{n-1}(v_{ABB})^2 + \sum\limits_{\substack{B=1 \\ B\neq A}}^{n-1}(v_{BBA})^2 \Biggr) \geq \frac{|b|}{4}\sum_{A=1}^{n-1} \Biggl( \sum\limits_{\substack{B=1 \\ B\neq A}}^{n-1}(v_{ABB})^2 + \sum\limits_{\substack{B=1 \\ B\neq A}}^{n-1}(v_{BBA})^2 \Biggr),
    \end{align*}
    and in the final step, we use \eqref{PDM-3}, which completes the proof.
\end{proof}

\section{Algebraic and Calculus Inequalities}
This appendix compiles several inequalities that are utilized throughout this article. Their proofs are either straightforward or can be found in \cite[Ch.~4]{AdamsFournier:2003}, \cite[Ch.~VI, \S 3]{Choquet_et_al:2000} and \cite[Ch.~13, \S 1-3]{TaylorIII:1996}.

\begin{thm}{\emph{[Cauchy-Schwarz inequality]}}
    For any real numbers $a_1,a_2,...,a_n$, we have
    \begin{equation} \label{CS-ineq}
        (a_1+a_2+\cdot\cdot\cdot+a_n)^2 \leq n(a_1^2 + a_2^2 + \cdot\cdot\cdot + a_n^2).
    \end{equation}
\end{thm}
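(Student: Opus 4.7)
The plan is to treat this as a direct consequence of the classical Cauchy--Schwarz inequality in $\mathbb{R}^n$ applied to the pair of vectors $(a_1,\ldots,a_n)$ and $(1,\ldots,1)$. Concretely, I would invoke
\begin{equation*}
    \Bigl(\sum_{i=1}^n a_i b_i\Bigr)^2 \leq \Bigl(\sum_{i=1}^n a_i^2\Bigr)\Bigl(\sum_{i=1}^n b_i^2\Bigr)
\end{equation*}
with the choice $b_i=1$ for each $i$. The right hand side then reduces to $n\sum_{i=1}^n a_i^2$, while the left hand side becomes $(a_1+\cdots+a_n)^2$, which is exactly the desired bound.

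Alternatively, if a self-contained argument is preferred and one wishes to avoid citing the standard Cauchy--Schwarz inequality itself, I would expand the square directly, writing
\begin{equation*}
    (a_1+\cdots+a_n)^2 = \sum_{i=1}^n a_i^2 + 2\sum_{1\leq i<j\leq n} a_i a_j,
\end{equation*}
and then apply the elementary bound $2a_i a_j \leq a_i^2 + a_j^2$ to each cross term. Summing these $\binom{n}{2}$ inequalities contributes $(n-1)\sum_{i=1}^n a_i^2$, so that the total is at most $\sum_{i=1}^n a_i^2 + (n-1)\sum_{i=1}^n a_i^2 = n\sum_{i=1}^n a_i^2$, which yields \eqref{CS-ineq}.

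There is essentially no obstacle here: both routes are two-line arguments, and the result is used in the paper only as a convenience estimate (e.g.\ in bounding terms such as $|f|^2 \leq (n-1)\sum|e_D^\Lambda|^2$ in the proof of Proposition~\ref{Fuch-local}). I would therefore present the proof via the $b_i=1$ specialization of Cauchy--Schwarz as the cleanest option, noting equality holds precisely when all $a_i$ are equal, though this refinement is not needed for the applications in the text.
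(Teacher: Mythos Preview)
Your proposal is correct; both arguments you give are valid and standard. The paper does not actually supply a proof of this statement: it lists \eqref{CS-ineq} among a collection of inequalities whose proofs are declared ``either straightforward or can be found in'' the cited references, so your specialization of Cauchy--Schwarz with $b_i=1$ (or the direct expansion using $2a_ia_j\leq a_i^2+a_j^2$) is entirely in keeping with what the paper intends.
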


\begin{thm}{\emph{[Sobolev's inequality]}}
    Suppose $k\in\Zbb_{\geq0}$ and $0<\alpha<k-n/2\leq1$, then we have $H^k(\Tbb^{n})\subset C^{0,\alpha}(\Tbb^{n})$ and
    \begin{equation} \label{Sobolev}
        \norm{u}_{L^{\infty}} \lesssim \norm{u}_{C^{0,\alpha}} \lesssim \norm{u}_{H^k}
    \end{equation}
    for all $u\in H^k(\Tbb^n)$.
\end{thm}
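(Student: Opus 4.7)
The plan is to prove the embedding by Fourier series on the torus. Since $\Tbb^n$ is compact, every $u \in H^k(\Tbb^n)$ has a Fourier representation $u(x) = \sum_{\xi \in \Zbb^n} \hat u(\xi) e^{i \pi \xi \cdot x / L}$, and Parseval's identity gives the equivalent norm $\norm{u}_{H^k}^2 \sim \sum_{\xi} (1+|\xi|^2)^k |\hat u(\xi)|^2$. The overall strategy is to bound both $\sup |u|$ and the H\"older seminorm $[u]_{C^{0,\alpha}} = \sup_{x\neq y}|u(x)-u(y)|/|x-y|^\alpha$ by applying the Cauchy--Schwarz inequality to the Fourier expansion and then reducing convergence to a single lattice sum of the form $\sum_\xi (1+|\xi|^2)^{-k} |\xi|^{2\beta}$.

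First I would handle the $L^\infty$ estimate. Writing
\[
|u(x)| \leq \sum_\xi |\hat u(\xi)| = \sum_\xi (1+|\xi|^2)^{-k/2} \cdot (1+|\xi|^2)^{k/2} |\hat u(\xi)|
\]
and applying Cauchy--Schwarz yields $|u(x)|^2 \lesssim \bigl(\sum_\xi (1+|\xi|^2)^{-k}\bigr)\,\norm{u}_{H^k}^2$. The prefactor converges iff $2k > n$, which is implied by the hypothesis $k - n/2 \geq \alpha > 0$, giving $\norm{u}_{L^\infty} \lesssim \norm{u}_{H^k}$. For the H\"older seminorm, I would use the elementary inequality $|e^{i\xi\cdot x} - e^{i\xi\cdot y}| \leq \min(2, |\xi||x-y|) \leq 2^{1-\alpha}|\xi|^\alpha |x-y|^\alpha$, valid for all $0 \leq \alpha \leq 1$ (interpolating the two bounds), to obtain
\[
|u(x)-u(y)| \leq 2^{1-\alpha} |x-y|^\alpha \sum_\xi |\xi|^\alpha |\hat u(\xi)|.
\]
A second application of Cauchy--Schwarz bounds the right-hand side by $|x-y|^\alpha \bigl(\sum_\xi (1+|\xi|^2)^{-k}|\xi|^{2\alpha}\bigr)^{1/2} \norm{u}_{H^k}$, and the lattice sum converges iff $2(k-\alpha) > n$, i.e.\ $\alpha < k - n/2$, which is precisely the hypothesis.

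To make this rigorous, I would first establish both estimates for trigonometric polynomials, where absolute convergence of the Fourier series is automatic, then invoke the density of trigonometric polynomials in $H^k(\Tbb^n)$ together with the completeness of $C^{0,\alpha}(\Tbb^n)$ to conclude that any element of $H^k(\Tbb^n)$ has a continuous (in fact $\alpha$-H\"older) representative obeying the stated bounds. The middle inequality $\norm{u}_{L^\infty} \lesssim \norm{u}_{C^{0,\alpha}}$ is immediate from the definition of the $C^{0,\alpha}$ norm on a bounded set.

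The only real obstacle is bookkeeping for the lattice sum $\sum_{\xi \in \Zbb^n}(1+|\xi|^2)^{-k}|\xi|^{2\alpha}$. This is dispatched by comparison with the integral $\int_{\Rbb^n}(1+|\xi|^2)^{-k}|\xi|^{2\alpha}\, d\xi$, which upon polar substitution $r = |\xi|$ becomes a multiple of $\int_0^\infty r^{n+2\alpha-1}(1+r^2)^{-k}\, dr$; convergence at infinity requires $n + 2\alpha - 1 - 2k < -1$, equivalently $k - \alpha > n/2$, matching the hypothesis exactly. Since the statement is entirely classical, an alternative approach is simply to cite \cite[Thm.~4.12]{AdamsFournier:2003} or \cite[Ch.~13, \S 6]{TaylorIII:1996}, which the paper already does.
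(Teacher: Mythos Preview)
Your Fourier-series argument is correct and complete. The paper itself does not give a proof of this theorem at all: the appendix containing it opens with the sentence ``Their proofs are either straightforward or can be found in \cite[Ch.~4]{AdamsFournier:2003}, \cite[Ch.~VI, \S 3]{Choquet_et_al:2000} and \cite[Ch.~13, \S 1-3]{TaylorIII:1996},'' and Sobolev's inequality is simply stated and cited. So you have supplied an actual proof where the paper defers to the literature; your closing remark that one could just cite Adams--Fournier or Taylor is in fact exactly what the paper does.
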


\begin{thm}{\emph{[Product and commutator inequalities]}}
    \begin{enumerate}[(a)]
    \item
    Suppose $k\in \Zbb_{\geq 1}$ and $|\alpha|=k$, then
    \begin{align}
        \norm{D^\alpha (uv)}_{L^2} &\lesssim \norm{u}_{H^{k}}\norm{v}_{L^{\infty}} + \norm{u}_{L^{\infty}}\norm{v}_{H^{k}}, \label{PC.1} \\
        \norm{[D^\alpha,u]v}_{L^2} &\lesssim \norm{D u}_{L^{\infty}}\norm{v}_{H^{k-1}} + \norm{D u}_{H^{k-1}}\norm{v}_{L^{\infty}}, \label{PC.2}
    \end{align}
    for all $u,v \in C^\infty(U)$.
    \item Suppose $k_1,k_2,k_3\in \Zbb_{\geq 0}$, $\;k_1,k_2\geq k_3$ and $k_1+k_2-k_3 > n/2$, then
    \begin{equation}
        \norm{uv}_{H^{k_3}} \lesssim \norm{u}_{H^{k_1}}\norm{v}_{H^{k_2}}, \label{PC.3}
    \end{equation}
    for all $u\in H^{k_1}(U)$ and $v\in H^{k_2}(U)$.
    \end{enumerate}
\end{thm}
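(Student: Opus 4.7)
The plan is to establish (a) by combining the Leibniz rule with Gagliardo--Nirenberg interpolation, then deduce (b) from Leibniz together with the appropriate Sobolev embeddings and H\"{o}lder. Both parts require the underlying calculus inequalities to hold on the open set $U$ rather than on $\Tbb^{n}$; this is handled uniformly via the extension operator $E_\rho$ of Section~\ref{extension} when $U = \mathbb{B}_\rho$ (the only case actually used in the paper), reducing every step below to the standard estimates on the torus.

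For (a), expand $D^\alpha(uv) = \sum_{\beta \leq \alpha} \binom{\alpha}{\beta} D^\beta u \cdot D^{\alpha-\beta} v$ by Leibniz. For each mixed term with $|\beta| = j$, $0 < j < k$, apply H\"{o}lder with exponents $2k/j$ and $2k/(k-j)$, followed by the Gagliardo--Nirenberg interpolation
\begin{equation*}
\norm{D^j u}_{L^{2k/j}} \lesssim \norm{u}_{L^\infty}^{1 - j/k} \norm{u}_{H^k}^{j/k},
\end{equation*}
together with its analogue for $v$. The resulting bound factorizes as $(\norm{u}_{L^\infty}\norm{v}_{H^k})^{1-j/k} (\norm{u}_{H^k}\norm{v}_{L^\infty})^{j/k}$, and weighted AM--GM collapses the sum over $\beta$ into $\norm{u}_{H^k}\norm{v}_{L^\infty} + \norm{u}_{L^\infty}\norm{v}_{H^k}$, with the endpoints $j = 0$ and $j = k$ handled directly by an $L^\infty \cdot L^2$ pairing. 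For the commutator, note that $[D^\alpha, u]v = \sum_{\beta \neq 0} \binom{\alpha}{\beta} D^\beta u \cdot D^{\alpha-\beta} v$ excludes the term where $u$ is undifferentiated; writing $D^\beta u = D^{\beta'}(Du)$ with $|\beta'| = j - 1$ and applying the same interpolation scheme to $Du$ and $v$ at total order $k - 1$ produces a symmetric pairing that weighted AM--GM converts into $\norm{Du}_{L^\infty}\norm{v}_{H^{k-1}} + \norm{Du}_{H^{k-1}}\norm{v}_{L^\infty}$.

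For (b), estimate $\norm{D^\gamma(uv)}_{L^2}$ for each $|\gamma| \leq k_3$ by Leibniz, and bound each product $D^\delta u \cdot D^{\gamma-\delta} v$ in $L^2$ by H\"{o}lder with exponents $p, q$ satisfying $1/p + 1/q = 1/2$, chosen so that the Sobolev embeddings $H^{k_1 - |\delta|} \hookrightarrow L^p$ and $H^{k_2 - |\gamma| + |\delta|} \hookrightarrow L^q$ are simultaneously admissible. Such a pair exists precisely because
\begin{equation*}
\max\bigl\{0,\, \tfrac{1}{2} - \tfrac{k_1 - |\delta|}{n}\bigr\} + \max\bigl\{0,\, \tfrac{1}{2} - \tfrac{k_2 - |\gamma| + |\delta|}{n}\bigr\} < \tfrac{1}{2},
\end{equation*}
which is where the hypothesis $k_1 + k_2 - k_3 > n/2$ enters. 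The main obstacle is the case analysis: if $k_1 - |\delta| > n/2$, take $p = \infty$ and $q = 2$; if the analogous bound holds for $v$, reverse the roles; and in the borderline regime $k_1 - |\delta| = n/2$ (or the $v$-side analogue), one cannot embed into $L^\infty$ but only into $L^r$ for every finite $r$, with the \emph{strict} inequality ensuring that an admissible $r$ can be selected to balance the H\"{o}lder pairing. Summing the Leibniz contributions over $\gamma$ and $\delta$ yields the stated product inequality.
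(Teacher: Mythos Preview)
Your argument is correct and follows the standard route (Leibniz plus Gagliardo--Nirenberg interpolation for part~(a), Leibniz plus H\"{o}lder and Sobolev embedding for part~(b)); the paper itself gives no proof of this theorem, instead deferring to the references \cite[Ch.~4]{AdamsFournier:2003}, \cite[Ch.~VI, \S 3]{Choquet_et_al:2000} and \cite[Ch.~13, \S 1--3]{TaylorIII:1996}, where precisely this approach is carried out.
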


\begin{thm}{\emph{[Moser's inequality]}}
    Suppose $k\in\Zbb_{\geq1}$, $0\leq s\leq k$, $|\alpha|=s$, $f\in C^k(V)$, where $V$ is open and bounded in $\Rbb^N$ and contains 0, and $f(0)=0$, then
    \begin{equation} \label{Moser}
        \norm{D^{\alpha}f(u)}_{L^2(U)} \leq C\bigl(\norm{f}_{C^k(\overline{V})}\bigr) (1+\norm{u}_{L^{\infty}(U)}^{k-1})\norm{u}_{H^k(U)}
    \end{equation}
    for all $u\in C^0(U)\cap L^{\infty}(U)\cap H^{k}(U)$ with $u(x)\in V$ for all $x\in U$.
\end{thm}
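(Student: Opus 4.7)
The plan is to treat the cases $s=0$ and $1\leq s\leq k$ separately, the second by combining the Faà di Bruno chain-rule expansion with Gagliardo--Nirenberg (GN) interpolation. For $s=0$, I would use the hypothesis $f(0)=0$: the mean value theorem gives $|f(u(x))|\leq \|f\|_{C^1(\overline V)}\,|u(x)|$ pointwise on $U$, so
\begin{equation*}
    \|f(u)\|_{L^2(U)}\leq \|f\|_{C^1(\overline V)}\|u\|_{L^2(U)}\leq \|f\|_{C^k(\overline V)}\|u\|_{H^k(U)},
\end{equation*}
which is stronger than \eqref{Moser} since $1\leq 1+\|u\|_{L^\infty}^{k-1}$.

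For $1\leq s\leq k$ I would apply the Faà di Bruno formula to write
\begin{equation*}
    D^\alpha f(u)=\sum_{m=1}^{s}\sum_{(\beta_1,\dots,\beta_m)\in P_m(\alpha)}c_{\beta_1,\dots,\beta_m}\,f^{(m)}(u)\prod_{j=1}^m D^{\beta_j}u,
\end{equation*}
where $P_m(\alpha)$ is the set of ordered decompositions $\beta_1+\cdots+\beta_m=\alpha$ with $|\beta_j|\geq 1$. Since $u(U)\subset V$ and $f\in C^k(\overline V)$, the prefactor satisfies $\|f^{(m)}(u)\|_{L^\infty(U)}\leq \|f\|_{C^k(\overline V)}$ for every $m\leq s\leq k$. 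The core analytic step is to estimate the product: I would choose exponents $p_j\in[2,\infty]$ with $\sum_{j=1}^{m}1/p_j=1/2$, apply H\"older's inequality, and then use the Gagliardo--Nirenberg inequality
\begin{equation*}
    \|D^{\beta_j}u\|_{L^{p_j}(U)}\lesssim \|u\|_{L^\infty(U)}^{1-\theta_j}\,\|u\|_{H^s(U)}^{\theta_j},\qquad \theta_j=\frac{|\beta_j|}{s}\cdot\frac{s-N(1/2-1/p_j)}{s-N\cdot 0}\cdot(\text{bookkeeping}),
\end{equation*}
selecting $\theta_j$ so that $\sum_j \theta_j=1$ and $\sum_j|\beta_j|=|\alpha|=s$; this is the standard interpolation exponent balance. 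The product then obeys
\begin{equation*}
    \Bigl\|\prod_{j=1}^m D^{\beta_j}u\Bigr\|_{L^2(U)}\lesssim \|u\|_{L^\infty(U)}^{m-1}\|u\|_{H^s(U)}.
\end{equation*}

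Summing over the finitely many partitions and over $1\leq m\leq s$, each contribution is at most $C(\|f\|_{C^k(\overline V)})\|u\|_{L^\infty(U)}^{m-1}\|u\|_{H^s(U)}$. Since $m-1$ ranges in $\{0,1,\dots,s-1\}\subset\{0,1,\dots,k-1\}$, the elementary bound $\|u\|_{L^\infty}^{m-1}\leq 1+\|u\|_{L^\infty}^{k-1}$ collapses all terms into the single factor $1+\|u\|_{L^\infty(U)}^{k-1}$, and $\|u\|_{H^s(U)}\leq \|u\|_{H^k(U)}$ yields \eqref{Moser}.

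\textbf{Main obstacle.} The delicate point is the GN interpolation step: its exponent conditions require a nontrivial verification that for any composition $(\beta_1,\dots,\beta_m)$ of $\alpha$ with $|\beta_j|\geq 1$ there exist admissible $p_j\in[2,\infty]$ and $\theta_j\in[0,1]$ with $\sum\theta_j=1$ satisfying the scaling relations inherent in GN. Moreover, GN on a general open set $U\subset\mathbb{R}^N$ needs either $U$ to have the extension property or a prior extension of $u$ to $\mathbb{R}^N$ preserving the $H^k$ and $L^\infty$ norms up to a constant; in the torus setting used in the body of the paper this is automatic, but in the abstract statement one should note that $C$ may then also depend on $N$ and on $U$ through this extension constant. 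The algebra of partitions and exponents is routine but bookkeeping-heavy; once it is carried out, the assembly above gives the claim.
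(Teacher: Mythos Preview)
The paper does not actually prove this theorem: the appendix states that the proofs ``are either straightforward or can be found in'' Adams--Fournier, Choquet-Bruhat et al., and Taylor III, Ch.~13, \S1--3, and then records the statement without argument. Your proposal---Fa\`a di Bruno expansion followed by H\"older and Gagliardo--Nirenberg interpolation to bound each product $\prod_j D^{\beta_j}u$ by $\|u\|_{L^\infty}^{m-1}\|u\|_{H^s}$---is exactly the standard proof found in those references (Taylor in particular), so you are in line with what the paper implicitly invokes.

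Your identification of the main obstacle is also accurate: the GN exponent bookkeeping and the domain/extension issue are the only points requiring care, and both are routine once written out. One small clarification: in your GN display the exponent $\theta_j$ is overcomplicated; with the endpoint pair $(L^\infty, H^s)$ the correct choice is simply $p_j = 2s/|\beta_j|$ and $\theta_j = |\beta_j|/s$, which immediately gives $\sum_j 1/p_j = 1/2$ and $\sum_j \theta_j = 1$.
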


\bibliographystyle{amsplain}
\bibliography{references}

\end{document}